\documentclass[11pt]{article}
\usepackage[nohead,margin=1.0in]{geometry}
\usepackage{amssymb, amsmath, amsthm,amsfonts}
\usepackage{amsmath}
\usepackage{graphicx,epsfig}
\usepackage[tight]{subfigure}
\usepackage{cite}
\usepackage{times}
\usepackage{float}
\graphicspath{{./pics/}}
\usepackage{color}
\usepackage{epstopdf}
\usepackage{mathrsfs}
\usepackage{mathtools}
\usepackage{threeparttable}
\usepackage{algorithm,algorithmic}
\usepackage{booktabs}
\usepackage{verbatim}
\usepackage{bm}

\usepackage[colorlinks,linktocpage,linkcolor=blue]{hyperref}

\numberwithin{equation}{section}
\usepackage{array}
\newtheorem{thm}{Theorem}[section]
\newtheorem{lem}{Lemma}[section]
\newtheorem{remark}{Remark}[section]
\newtheorem{example}{Example}[section]
\newtheorem{prop}{Proposition}[section]

\newtheorem{assum}{Assumption}[section]

\allowdisplaybreaks

\def\al{\alpha}

\def\Om{\Omega}

\def\Rbb{\mathbb{R}}

\def\II{{(\Omega)}}

\def\II{(\Omega)}

\renewcommand{\d}{\mathrm{d}}

\title{Numerical Approximation and Analysis of the Inverse Robin Problem Using the Kohn-Vogelius Method\thanks{E.B. was supported by the EPSRC grants EP/T033126/1 and EP/V050400/1. For the purpose of open access, the author has applied a Creative Commons Attribution (CC BY) licence to any Author Accepted Manuscript version arising. The work of B. J. is supported by Hong Kong RGC General Research Fund (Project
14306423 and Project 14306824), and a start-up fund from The Chinese University of Hong Kong. The work of Z. Z. is supported by by National Natural
Science Foundation of China (Project 12422117) and Hong Kong Research
Grants Council (15302323), and an internal grant of Hong Kong Polytechnic
University (Project ID: P0038888, Work Programme: ZVX3).
}}

\author{Erik Burman\thanks{Department of Mathematics, University College London, London WC1E 6BT, UK (\texttt{e.burman@ucl.ac.uk})}\and
Siyu Cen\thanks{Department of Applied Mathematics, The Hong Kong Polytechnic University, Kowloon, Hong Kong, P.R. China (\texttt{siyu.cen2021@connect.polyu.hk, zhizhou@polyu.edu.hk})}
\and Bangti Jin\thanks{Department of Mathematics, The Chinese University of Hong Kong, Shatin, New Territories, Hong Kong, P.R. China (\texttt{bangti.jin@gmail.com, b.jin@cuhk.edu.hk}).}
\and Zhi Zhou\footnotemark[3]}

\begin{document}

%\subjclass{...}
\maketitle

\begin{abstract}
In this work, we numerically investigate the inverse Robin problem of recovering a piecewise constant Robin coefficient in an elliptic or parabolic problem from the Cauchy data on a part of the boundary, a problem that commonly arises in applications such as non-destructive corrosion detection. We employ a Kohn-Vogelius type variational functional for the regularized reconstruction, and discretize the resulting optimization problem using the Galerkin finite element method on a graded mesh. We establish rigorous error estimates on the recovered Robin coefficient in terms of the mesh size, temporal step size and noise level. This is achieved by combining the approximation error of the direct problem, \textit{a priori} estimates on the functional, and suitable conditional stability estimates of the continuous inverse problem. We present several numerical experiments to illustrate the approach and to complement the theoretical findings.

\vskip5pt
\noindent\textbf{Keywords}: inverse Robin problem, conditional stability, finite element method, error estimate
\end{abstract}
%\tableofcontents

\section{Introduction}\label{sec:intro}
This paper is concerned with the inverse problem of recovering a Robin coefficient $q(x)$ in elliptic or parabolic problems from the boundary measurement. Let $\Om\subseteq \Rbb^2$ be a convex polyhedral domain with a boundary $\partial\Omega$. Let $\Gamma_i$ and $\Gamma_a$ be disjoint parts of the boundary $\partial\Omega$ satisfying $\Gamma_i\cup\Gamma_a=\partial\Omega$. The boundary $\Gamma_i$ is inaccessible to take measurement. Consider the following boundary value problem for $u$:
\begin{equation}\label{eqn:problem_NR}
    \left\{
        \begin{aligned}
            -\Delta u & =0, && \mbox{in }\Om, \\
            \partial_n u+qu & =0, && \mbox{on } \Gamma_i,\\
            \partial_nu & =g, && \mbox{on } \Gamma_a,
        \end{aligned}
    \right.
\end{equation}
where $n$ denotes the unit outward normal vector to the boundary $\partial\Omega$, and $g$ denotes the given boundary data. The solution $u$ to problem \eqref{eqn:problem_NR} is denoted by $u(q)$ to indicate its dependence on the Robin coefficient $q$. The {inverse Robin problem} is to recover the Robin coefficient $q^\dag$ on the inaccessible part $\Gamma_i$ from a noisy version of the Dirichlet data $f=u(q^\dag)|_{\Gamma_a'} $ on a connected subset $\Gamma_a' \subset\Gamma_a$.

The inverse problem arises in several practical applications. In thermal engineering, problem \eqref{eqn:problem_NR} can describe steady-state heat transfer in which convective conduction occurs at the interface $\Gamma_i$ between the conducting body $\Omega$ and its surroundings; the variable $u$ represents the temperature, $g$ is heat flux and $q$ is closely related to physical and chemical properties of the material at the interface $\Gamma_i$. Then the inverse Robin problem is to determine the thermal convection property on $\Gamma_i$ from indirect measurement. It arises also in non-destructive testing of corrosion in an electrostatic conductor \cite{Inglese:1997}. The Robin boundary condition arises when considering a thin oscillating coating surrounding a homogeneous background medium such that the thickness of the layer and the wavelength of the oscillations tend simultaneously to 0 \cite{ButtazzoKohn:1987}.  It arises also naturally in determining the contact resistivity of planar electronic
devices, e.g., metal oxide semiconductor field-effect transistors \cite{Fang:1992}.

Several numerical approaches for reconstructing the Robin coefficient $q$ from boundary measurement have been developed \cite{Inglese:1997,Chaabane:2004,Jaoua:2008,Jin:2007,Jin:2009,Jin:2010,Chaabane:2012,Harrach:2021,ChenJiangZou:2022,Rasmussen:2023}. Inglese \cite{Inglese:1997} proposed to decouple the problem by first solving the Cauchy problem of Laplace equation (with the Cauchy data on $\Gamma_a'$), and then using the algebraic relation $q=\partial_n u(q)/ u(q)$ to handle the nonlinearity; See  also \cite{Jaoua:2008} for a similar analytic extension approach. %These methods are generally sensitive to the presence of data noise, due to the severe ill-posedness of the elliptic Cauchy problem. 
Several algorithms  based on the variational approach \cite{EnglHankeNeubauer:1996,ItoJin:2015} have been developed. Jin and Zou \cite{Jin:2010} employed the output least-squares functional with an $L^2(\Gamma_i)$ penalty to recover a smooth $q(x)$, and analyzed the convergence behavior of the approximation as the noise level $\delta$ and mesh size  $h\rightarrow 0$. Later they \cite{Jin:2009} studied reconstructing a piecewise constant $q(x)$ with the Modica-Mortola functional and discussed the convergence of the discrete approximation. Xu and Zou \cite{XuZou:2015sicon} developed an adaptive finite element method (FEM), and proved the convergence of the adaptive method. Chaabane et al employed the Kohn-Vogelius functional \cite{KohnVogelius:1987} in \cite{Chaabane:2004} and \cite{Chaabane:2012} to recover smooth and piecewise constant $q(x)$, respectively, and reported excellent stability of the method with respect to noise. In practice, the variational formulations are discretized using the Galerkin FEM. This inevitably introduces additional errors, which can affect the quality of the reconstruction. However, the above studies have either ignored the discretization step or only considered the convergence of the FEM approximation, but not the convergence rate. One goal of this work is precisely to fill the gap.

The numerical reconstruction of the parabolic inverse Robin problem has been extensively studied in the literature \cite{Slodivcka:2010,JinLu:2012,Hao:2013,Zhuo:2020,CaoLesnicLiu:2020,Pyatkov:2023}. Specifically, \cite{JinLu:2012,Zhuo:2020} addressed the recovery of a space-time dependent Robin coefficient $q(x,t)$ from measurements on $\Gamma_a \times (0,T)$ using a penalized least-squares formulation. The convergence of the FEM approximation was analyzed in \cite{JinLu:2012}, though no error bounds were established. In \cite{CaoLesnicLiu:2020}, Cao et al. extended the problem by simultaneously reconstructing $q$ and either the heat flux $g$ or the initial data $u_0$. To the best of our knowledge, the uniqueness and stability of determining a general space-time dependent $q(x,t)$ remain largely unexplored, except for the one-dimensional case \cite{Slodivcka:2010} and the separable case \cite{Pyatkov:2023}.

In this work, we revisit the numerical reconstruction of a piecewise constant Robin coefficient $q(x)$ using the Kohn-Vogelius functional discretized by the Galerkin FEM for both elliptic and parabolic problems, and contribute to the following three aspects. First, in the elliptic case, we derive error bounds in terms of the noise level $\delta$ and mesh size $h$, in which $q$ may or may not have a known partition; see Theorem \ref{thm:error_estimate} for the precise statement. The error analysis combines the conditional stability result for the inverse Robin problem (\cite[Theorem 2.4]{Sincich:2007} and \cite[Theorem 4.1]{Hu:2015} for the Lipschitz and H\"{o}lder stability, respectively) with the \textit{a priori} estimate on the discrete functional, and the error bounds are comparable to the stability estimates. Second, we analyze the parabolic inverse Robin problem of recovering a time-independent piecewise constant Robin coefficient $q^\dag(x)$ from the terminal boundary observation $u(x,T)$ on the boundary $\Gamma_a'$. We establish two new stability estimates for large time $T$, complementing the uniqueness result in \cite{Hao:2013}. Third, we develop a novel numerical algorithm based on the Kohn-Vogelius functional and a fully discrete scheme in the parabolic case, and provide rigorous error bounds on the discrete approximation. The analysis employs several technical tools, including numerical analysis of the direct problem with nonsmooth data, decay estimate of the solution, \textit{a priori} estimate on the functional and the continuous stability estimate in the parabolic case.

The rest of the paper is organized as follows. In Section \ref{sec:elliptic}, we discuss the elliptic inverse Robin problem, including the FEM discretization and error bounds.
Then in Section \ref{sec:parabolic}, we extend the analysis to the parabolic case, and derive new stability results and error bounds for the discrete approximation. In Section \ref{sec:numer}, we present several numerical experiments to illustrate distinct features of the approach. Throughout, we use standard notation for Sobolev space $L^2(\Omega)$ and $H^s(\Omega)$. For any set $D$, the notation $(\cdot,\cdot)_{L^2(D)}$ denotes the $L^2(D)$ inner product, with $(\cdot,\cdot)_{L^2(\Omega)}$ abbreviated to $(\cdot,\cdot)$. We denote by $c$ a generic constant not necessarily the same at each occurrence but it is always independent of the noise level $\delta$, the discretization parameters ($h$ and $\tau$) and the penalty parameter $\alpha$.

\section{Inverse Robin problem in the elliptic case}\label{sec:elliptic}

\subsection{Elliptic regularity for mixed boundary value problems}
First we describe a useful regularity result for problem \eqref{eqn:problem_NR}, which plays a crucial role in the error analysis. In addition to problem \eqref{eqn:problem_NR}, we also need the following problem with mixed boundary conditions:
\begin{equation}\label{eqn:problem_DR}
    \left\{\begin{aligned}
            -\Delta u & =0, && \mbox{in }\Om, \\
            \partial_nu+qu & =0, && \mbox{on } \Gamma_i,\\
            \partial_n u & =g, && \mbox{on } \Gamma_a\setminus\Gamma_a',\\
            u & =f, && \mbox{on } \Gamma_a',
        \end{aligned}\right.
\end{equation}
with $f=u(q^\dag)|_{\Gamma_a'}$ being the exact data. Problem \eqref{eqn:problem_DR} will be used in formulating the Kohn-Vogelius functional.
First we precisely specify the admissible set on the Robin coefficient $q$. Throughout, we focus on a piecewise constant Robin coefficient $q^\dag$ of the following two classes.
\begin{itemize}
    \item[(i)] $q^\dag$ is piecewise constant on an \textit{a priori} known partition  $\overline{\Gamma_i}=\bigcup_{j=1}^{N}\overline{\Gamma_{i,j}}$, with connected and pairwise non-overlapping $\Gamma_{i,j}$. The admissible set $\mathcal{A}$ is given by
    \begin{equation}\label{eqn:ad_set_known}
        \mathcal{A}=\bigg\{q(x)=\sum_{j=1}^N q_j\chi_{\Gamma_{i,j}}(x)\,:\, 0<\underline{c}_q\le q_j\le \Bar{c}_q \bigg\}.
    \end{equation}
    \item[(ii)] $q^\dag$ is piecewise constant on an unknown partition  $\overline{\Gamma_i}=\bigcup_{j=1}^{N}\overline{\Gamma_{i,j}}$, with connected and pairwise non-overlapping $\Gamma_{i,j}$. The admissible set $\mathcal{B}$ is given by
    \begin{equation}\label{eqn:ad_set_unknown}
    \mathcal{B}=\left\{q(x):\begin{aligned}
            &\text{there exists a finite partition }\bigcup_{j=1}^{N}\Gamma_{i,j} \text{ of }\Gamma_i \text{ such that } |\Gamma_{i,j}|>c_0 \\
            &\text{and that } q(x)=q_j\in [ \underline{c}_q, \Bar{c}_q]  \text{ on }\Gamma_{i,j},\, j=1,2,\cdots,N
        \end{aligned}\right\},
    \end{equation}
  where $c_0>0$ is a constant and $|\Gamma_{i,j}|$ denotes the Lebesgue measure of $\Gamma_{i,j}$, the number $N$ of partition is not exactly known, but on which an upper bound is given.
\end{itemize}

Now we discuss the solution regularity for problems \eqref{eqn:problem_NR} and \eqref{eqn:problem_DR}. The solutions have weak singularities at the vertices $s_j$ where the types of boundary conditions change. The set of vertices $\{s_j\}_{j=0}^{N+2}$ are given by
\begin{align*}
    &\{s_0,s_N \}= \overline{\Gamma_a}\cap \overline{\Gamma_i}, \quad s_j=\overline{\Gamma_{i,j}}\cap \overline{\Gamma_{i,j+1}},\quad j=1,\dots,N-1,\quad
    \{s_{N+1},s_{N+2} \}= \overline{\Gamma_a\setminus\Gamma_a'}\cap \overline{\Gamma_a'}.
\end{align*}
The set $\{s_0,\ldots,s_N\}$ is for problem \eqref{eqn:problem_NR}, and the set $\{s_0,\ldots,s_{N+2}\}$ is for problem \eqref{eqn:problem_DR}. The solution $u$ to problem \eqref{eqn:problem_DR} has singularity at the vertices $s_{N+1}$ and $s_{N+2}$ due to the mix Dirichlet-Neumann boundary conditions. We denote by $\omega_j$ the angle of the boundary  at the transition point $s_j$. Since the domain $\Omega$ is convex, $\omega_j\in (0,\pi]$, and moreover $\omega_j=\pi$ if $s_j$ is located on an edge of $\Omega$, and $\omega_j< \pi$ if $s_j$ is a corner point. The following regularity decomposition holds; see e.g., \cite[Theorem 5.1.3.5]{Grisvard:1985} and \cite[Theorem 5.1]{Mghazli:1992}.
\begin{prop}\label{prop:sol_decomp}
Let $g\in H^\frac{1}{2}(\Gamma_a)$, and $q\in \mathcal{A}$ or $\mathcal{B}$. Then there exist constants $C_{j}$ such that  the solution $u $ to problem \eqref{eqn:problem_NR} satisfies
    \begin{equation*}
        u-\sum_{j=0}^{N} C_{j} u_{S_{j}}\in H^{2}(\Omega),
    \end{equation*}
    and the solution $u $ to problem \eqref{eqn:problem_DR} satisfies
    \begin{equation*}
        u-\sum_{j=0}^{N+2} C_{j} u_{S_{j}}\in H^{2}(\Omega),
    \end{equation*}
    where the singular functions $u_{S_{j}}$ are given by
    \begin{equation*}
        u_{S_{j}} = \left\{
            \begin{aligned}
                &O(|x-s_j|^{\pi/\omega_j}\log|x-s_j|), &&\mbox{ if }j=0,\dots,N,\\
                &O(|x-s_j|^{\pi/(2\omega_j)}\log|x-s_j|), &&\mbox{ if }j=N+1,N+2.\\
            \end{aligned}
         \right.
    \end{equation*}
\end{prop}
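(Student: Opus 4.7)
The plan is to reduce both assertions to classical corner-singularity decompositions, specifically Theorem 5.1.3.5 of Grisvard for Dirichlet/Neumann transitions and Mghazli's extension for Robin (third-kind) boundary conditions with jumping coefficients. The guiding principle is that near each transition point $s_j$, the leading singular behavior is determined only by the \emph{principal part} of the boundary operators on the two adjacent edges: the Robin condition $\partial_n u+qu=0$ shares its principal symbol with the Neumann condition, which accounts for the exponent $\pi/\omega_j$ at $s_0,\dots,s_N$, while a genuine Dirichlet/Neumann transition at $s_{N+1},s_{N+2}$ produces the exponent $\pi/(2\omega_j)$.

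First I would work locally: for each $j$, multiply $u$ by a smooth cutoff supported in a small neighborhood of $s_j$ and flatten the boundary so that $\Omega$ becomes a planar sector of opening $\omega_j$. After subtracting an $H^2$ lift of $g$ (using $g\in H^{1/2}(\Gamma_a)$), the localized function solves a Poisson equation with an $L^2$ source and with homogeneous Robin/Robin, Robin/Neumann, or Dirichlet/Neumann conditions on the two rays, depending on the type of $s_j$. Second I would invoke the explicit separation-of-variables computation of singular solutions on a sector to read off the admissible exponents $\pi/\omega_j$ or $\pi/(2\omega_j)$, with a logarithmic factor appearing precisely when the exponent is a nonnegative integer, as is the case for the Robin/Robin transitions $s_1,\dots,s_{N-1}$ that sit on the interior of an edge (where $\omega_j=\pi$). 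The regular remainder is then in $H^2$ by the standard $H^2$-regularity theory for mixed problems on a sector once the singular part has been subtracted, and summing the local decompositions over all $j$ produces the stated global splitting.

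The principal obstacle will be the rigorous treatment of the \emph{jumps} in the Robin coefficient $q$ at $s_1,\dots,s_{N-1}$, since Grisvard's theorem is stated for boundary conditions of uniform type along each edge. Either one cites Mghazli's result, which explicitly covers piecewise-constant Robin data, or one carries out a bootstrap: starting from $u\in H^1(\Omega)$ given by the Lax--Milgram coercivity of the Robin form (using $q\ge\underline c_q>0$), the trace $qu$ along each $\Gamma_{i,j}$ lies in $H^{1/2}(\Gamma_{i,j})$, so rewriting $\partial_n u=-qu$ as Neumann data in a pure Neumann problem on $\Omega$ and applying Grisvard's decomposition for the Neumann/Neumann case yields the $\pi/\omega_j$ singular functions at $s_1,\dots,s_{N-1}$ with only a logarithmic correction, which is exactly what is already encoded in the stated form of $u_{S_j}$. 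Iterating this improves the regular remainder to $H^2(\Omega)$ and closes the argument; the Dirichlet/Neumann part at $s_{N+1},s_{N+2}$ for problem \eqref{eqn:problem_DR} follows in parallel from the standard sector analysis with Dirichlet data $f\in H^{1/2}(\Gamma_a')$ lifted into $\Omega$.
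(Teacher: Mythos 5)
Your plan is correct and matches the paper's treatment: the paper offers no independent proof of this proposition but justifies it by citing exactly the two results you invoke, Grisvard's Theorem~5.1.3.5 for the Dirichlet--Neumann transitions and Mghazli's Theorem~5.1 for the Robin transitions, which encapsulate the localization, sector reduction, and separation-of-variables analysis you outline. Your additional bootstrap remark for the jumps in $q$ (treating $qu$ as piecewise $H^{1/2}$ Neumann data) is a reasonable way to fill in what those citations cover.
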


Note that the convexity of the domain $\Omega$ and Proposition \ref{prop:sol_decomp} imply that the solution $u$ to problem \eqref{eqn:problem_NR} has an almost $H^2(\Omega)$ regularity:
\begin{equation}\label{eqn:reg-0}
\|  u  \|_{H^{2-\epsilon}(\Omega)} \le c_\epsilon,\quad \forall \epsilon\in(0,1].
\end{equation}
For the exact data $f$ and the exact Robin coefficient $q^\dag$, the solution $u$ to problem  \eqref{eqn:problem_DR} coincides with that of problem \eqref{eqn:problem_NR} and hence it has the same regularity.

We use frequently the following interpolation inequality  \cite[Theorem 1.6.6]{Brenner:2008}.
\begin{lem}\label{lem:interp-ineq}
Let $\Omega$ be a Lispschitz domain. Then there holds $\| v \|_{L^2(\partial\Omega)} \le \| v \|_{L^2(\Omega)}^{\frac12} \| v
 \|_{H^1(\Omega)}^{\frac12}$ for all $v\in H^1(\Omega)$    
\end{lem}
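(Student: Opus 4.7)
The plan is to establish the inequality by a multiplicative trace argument based on the divergence theorem, which is the standard route in the Brenner--Scott textbook. By density of $C^\infty(\overline{\Omega})$ in $H^1(\Omega)$ (valid since $\Omega$ is Lipschitz), it suffices to prove the bound for smooth $v$; the general case then follows by continuity of both sides in the $H^1(\Omega)$ topology (the trace map being bounded $H^1(\Omega)\to L^2(\partial\Omega)$).

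The core step is to construct a vector field $\beta \in C^{0,1}(\overline{\Omega};\mathbb{R}^2)$ satisfying $\beta\cdot n \ge c_1>0$ almost everywhere on $\partial\Omega$. For a Lipschitz domain one builds $\beta$ locally: around every boundary point one works in a coordinate system in which $\partial\Omega$ is the graph of a Lipschitz function and selects a constant vector that has a strictly positive inner product with the outward normal in that chart; a finite partition of unity subordinate to a covering of $\partial\Omega$ then glues these local vector fields into a global Lipschitz field with the required transversality. (In particular, if $\Omega$ is star-shaped with respect to some $x_0$, one may simply take $\beta(x)=x-x_0$, and for the convex polygons considered in this paper such a choice is always available.)

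With $\beta$ in hand, I would apply the divergence theorem to $v^2\beta$:
\begin{equation*}
\int_{\partial\Omega} v^2 (\beta\cdot n)\,\d s
= \int_\Omega \nabla\cdot(v^2\beta)\,\d x
= \int_\Omega \bigl(2 v\,\beta\cdot\nabla v + v^2\,\nabla\cdot\beta\bigr)\,\d x.
\end{equation*}
Using $\beta\cdot n\ge c_1$ on the left and bounding $|\beta|$, $|\nabla\cdot\beta|$ on the right, this yields
\begin{equation*}
c_1\|v\|_{L^2(\partial\Omega)}^2 \le C\bigl(\|v\|_{L^2(\Omega)}\|\nabla v\|_{L^2(\Omega)} + \|v\|_{L^2(\Omega)}^2\bigr)
\le C'\,\|v\|_{L^2(\Omega)}\,\|v\|_{H^1(\Omega)},
\end{equation*}
where in the last inequality I used $\|v\|_{L^2(\Omega)}\le\|v\|_{H^1(\Omega)}$ to absorb the zeroth order term. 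Taking square roots gives the claimed bound (with a domain-dependent constant absorbed in the statement's constant).

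The main obstacle is really the construction of the transversal field $\beta$ for an arbitrary Lipschitz $\Omega$; everything else is Cauchy--Schwarz and the arithmetic-geometric inequality. For the polyhedral domains actually used in the paper, this step is essentially trivial, so the proof reduces to one application of the divergence theorem followed by a direct estimate.
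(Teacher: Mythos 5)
Your proof is correct and follows exactly the argument behind the cited reference \cite[Theorem 1.6.6]{Brenner:2008}: a transversal Lipschitz vector field, the divergence theorem applied to $v^2\beta$, and Cauchy--Schwarz; the paper itself offers no proof beyond this citation. The only cosmetic point is that your argument (necessarily) produces a domain-dependent constant in front of the right-hand side, which the lemma as printed omits but which is clearly intended, since the inequality with constant $1$ fails on, e.g., thin domains.
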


\subsection{Finite element approximation}\label{subsec:FEM_approx}
The presence of the local solution singularity leads to sub-optimal convergence of the Galerkin FEM on uniform meshes.
The singularity analysis in Proposition \ref{prop:sol_decomp} motivates using locally refined FEM meshes in order to achieve optimal error estimates. Below we recall several standard results in the Galerkin FEM approximation. Let $\mathcal{T}_h$ be a triangulation of the domain $\Omega$ into triangles, denoted by $T$. Let $h_T$ be the diameter of $T$, $h=\max_{T\in \mathcal{T}_h} h_T$, and let $d_T=\min_{  j=N+1, N+2}\mathrm{dist}(s_j,T)$ denote the minimal distance from  the singular vertices $\{s_{N+1},s_{N+2}\} $ to the triangle $T$. We construct a mesh $\mathcal{T}_h$ graded near the vertices $\{s_j\}_{j=N+1,N+2} $ as (with  $h_*\sim h^{1/r}$)
\begin{equation}\label{eqn:mesh}
    \hbar\sim\left\{
        \begin{aligned}
            & d_T^{1-r} h, \quad \mbox{with }r\in(0,\min(1,\tfrac{\pi}{2\omega_j})), && \mbox{ for } h_*<d_T,\\
            & h_* , && \mbox{ for } h_*\ge d_T.
        \end{aligned}
    \right.
\end{equation}
Over the mesh $\mathcal{T}_h$, we define a continuous piecewise linear FEM space $V_h$ by
\begin{align*}
	V_h&=\{v_h\in H^1\II :v_h|_T \mbox{ is linear,} \quad \forall T\in \mathcal{T}_h\},
\end{align*}
and let $V_{h}^0=V_h\cap H_0^1(\Omega)$ and $V_h^{0,a}:=V_h\cap\{v_h=0\mbox{ on }\Gamma_a' \}$.
%We define the standard $L^2(\Omega)$-projection operator $P_h:L^2(\Omega)\rightarrow V_h$ by
%\begin{equation*}
%    (P_h v, \varphi_h )=( v, \varphi_h ),\quad \forall v\in L^2(\Omega),\, \varphi_h\in V_h,
%\end{equation*}
%and the Ritz projection operator $R_h=R_h(q):H^1(\Omega)\rightarrow V_h$ by
We define the  Ritz projection operator $R_h=R_h(q):H^1(\Omega)\rightarrow V_h$ by
\begin{equation*}
     (\nabla R_h v, \nabla \varphi_h)+(qR_hv, \varphi_h)_{L^2(\Gamma_i)}= (\nabla v,\nabla \varphi_h)+(q v, \varphi_h)_{L^2(\Gamma_i)},\quad \forall v\in H^1(\Omega),\, \varphi_h\in V_h.
\end{equation*}
The operator  $R_h$ satisfy the following error estimates for $s\in [1,2] $\cite[p. 32]{Thomee:2007}
%\red{The H^1 estimate for P_h does not hold for gridded mesh.}
\begin{align}%\label{eqn:error_Ph}
            %\|v-P_h v\|_{L^2(\Omega)}+h\|\nabla(v-P_h v)\|_{L^2(\Omega)}\le ch^s\|v\|_{H^s(\Omega)}, \\
\label{eqn:error_Rh}
            \|v-R_h v\|_{L^2(\Omega)}+h\|\nabla(v-R_h v)\|_{L^2(\Omega)}\le ch^s\|v\|_{H^s(\Omega)}.
\end{align}
Let $\mathcal{I}_h$ be the Lagrange interpolation operator associated with the FEM space $V_h$. It satisfies the following error estimate for $s\in [1,2]$ and $1\le p\le \infty$ (with $sp>d$ if $p>1$ and $sp\ge d $ if $p=1$)\cite[Theorem 4.4.20]{Brenner:2008}:
\begin{equation}\label{eqn:error_Ih}
     \|v-\mathcal{I}_h v\|_{L^p(\Omega)}+h\|\nabla(v-\mathcal{I}_h v)\|_{L^p(\Omega)}\le ch^s\|v\|_{W^{s,p}(\Omega)}.
\end{equation}
We denote by $\mathcal{I}_h^\partial$ the interpolation operator restricted to the boundary $\partial\Omega$.

\subsection{Error analysis} \label{subsec:error_ell}

Now we develop a reconstruction algorithm for the elliptic inverse Robin problem and provide an error analysis of the scheme. The algorithm is based on the following Kohn-Vogelius functional that matches the energy in the domain \cite{KohnVogelius:1987}
\begin{equation}\label{eqn:cts_functional}
		\min_{q\in \mathcal{A}} J_{\al}(q)=\|\nabla u_N-\nabla u_D \|_{L^2(\Omega)}^2+\|  q^{\frac{1}{2}}(u_N-  u_D) \|_{L^2(\Gamma_i)}^2+\alpha\|  u_N-  u_D \|_{L^2(\Omega)}^2,
\end{equation}
where $\al>0$ is the penalty parameter, and $ u_N(q), u_D(q) \in H^1\II$ are weak solutions to problems \eqref{eqn:problem_NR} and \eqref{eqn:problem_DR}, respectively. The first two terms represent the standard Kohn-Vogelius functional for data fitting (similar to the weak formulation of problem \eqref{eqn:problem_NR}), and the third term is an additional control term. It controls the data-fitting in the $L^2(\Omega)$ norm, and plays a central role in the error analysis of the numerical scheme below. This  $L^2(\Omega)$ bound and the first term give an $H^1(\Omega)$ estimate and also yield an optimal bound on $\|u_N-u_D\|_{L^2(\Gamma_a')}$, cf. Lemma \ref{lem:apriori}. 
Note that when $q=q^\dag$, the function $u_N(q^\dag)$ is also a solution to problem \eqref{eqn:problem_DR}, and $u_D(q^\dag)$ coincides with $u_N(q^\dag)$ and the objective $J_\alpha(q^\dag)$ vanishes.

Now we can formulate the Galerkin FEM approximation of problem \eqref{eqn:cts_functional}:
\begin{equation}\label{eqn:dis_functional}
	\min_{q\in \mathcal{A}} J_{\al,h}(q)=\|\nabla u_{N,h}-\nabla u_{D,h}^\delta \|_{L^2(\Omega)}^2+\|  q^{\frac{1}{2}}(u_{N,h}-  u_{D,h}^\delta) \|_{L^2(\Gamma_i)}^2+\alpha\|  u_{N,h}-  u_{D,h}^\delta \|_{L^2(\Omega)}^2,
\end{equation}
where $ u_{N,h}=u_{N,h}(q) \in V_h$ solves
\begin{equation}\label{eqn:dis_constraint_NR}
	(\nabla u_{N,h},\nabla v_h)+(q u_{N,h} ,v_h)_{L^2(\Gamma_i)}=(g,v_h)_{L^2(\Gamma_a)},\quad \forall v_h\in V_h,
\end{equation}
and $  u_{D,h}^\delta=u_{D,h}^\delta(q)\in V_h$ satisfies {$u_{D,h}^\delta=  z_h^{\delta}\mbox{ on }\Gamma_a'$} and
\begin{equation}\label{eqn:dis_constraint_DR}
	(\nabla u_{D,h}^\delta,\nabla v_h)+(q u_{D,h}^\delta, v_h)_{L^2(\Gamma_i)}=(g, v_h)_{L^2(\Gamma_a\setminus\Gamma_a')},\quad \forall v_h\in V_h^{0,a}.
\end{equation}
The noisy data $z^\delta_h$ is used in problem \eqref{eqn:dis_constraint_DR} (for $u_{D,h}^\delta$).  Throughout, the noisy data $z_h^\delta \in C(\overline{\Gamma}_a')$ is assumed to satisfy a noise level $\delta$ in the following sense:
\begin{equation}\label{eqn:noise}
    \|z_h^{\delta}- f \|_{H^{s}(\Gamma_a')}\le c\delta^{1-2s/3},\quad \text{with } -\tfrac{1}{2}\le s \le \tfrac{1}{2}.
\end{equation}
\begin{remark}\label{rem:noise}
Condition \eqref{eqn:noise} assumes that the noise level changes with the norm $\|\cdot\|_{H^s(\Gamma_a')}$, which is reasonable in practice. Indeed, suppose that the observational data $z^\delta\in L^2(\Gamma_a')$ satisfies $\|z^{\delta}- u|_{\Gamma_a'} \|_{H^{-\frac{1}{2}}(\Gamma_a')}\le \delta^{\frac{4}{3}} $. Then let $z_h^\delta = P_{h,\Gamma_a'} z^\delta$, with $P_{h,\Gamma_a'}$ being the $L^2(\Gamma_a')$ orthogonal projection operator from $L^2(\Gamma_a')$ to $V_h|_{\Gamma_a'}$. The stability of $P_{h,\Gamma_a'}$ yields
\begin{equation}
\begin{aligned}
\| z_h^{\delta} -f\|_{H^{-\frac{1}{2}}(\Gamma_a')}
&\le \| z_h^{\delta} -P_{h,\Gamma_a'}f\|_{H^{-\frac{1}{2}}(\Gamma_a')} + \|P_{h,\Gamma_a'}f-f\|_{H^{-\frac{1}{2}}(\Gamma_a')} \\
&\leq c\| z^{\delta} -f\|_{H^{-\frac{1}{2}}(\Gamma_a')} + ch^2\| f \|_{H^\frac32(\Gamma_a')}\le c(\delta^{\frac{4}{3}}+h^2).
\end{aligned}
\end{equation}
where in the second inequality we use the estimate for $\phi\in H^\frac12(\Gamma_a')$
\begin{equation*} 
 |(P_{h,\Gamma_a'}f-f,\phi )_{\Gamma_a'}| = 
 | (P_{h,\Gamma_a'}f-f,\phi- P_{h,\Gamma_a'}\phi)_{\Gamma_a'} | \le c h^2 \| f \|_{H^\frac32(\Gamma_a')} \| \phi \|_{H^\frac12(\Gamma_a')} .
\end{equation*}
Meanwhile, by the inverse inequality \cite[(1.12)]{Thomee:2007} and the $L^2$ projection error \cite[p. 32]{Thomee:2007}, we derive
    \begin{align*}
         \| z_h^{\delta}- f \|_{H^{\frac{1}{2}}(\Gamma_a')}
         \le &  \|  P_{h,\Gamma_a'}(z^{\delta}-   f) \|_{H^{\frac{1}{2}}(\Gamma_a')}+ \| P_{h,\Gamma_a'} f- f \|_{H^{\frac{1}{2}}(\Gamma_a')}\\
         \le & ch^{-1}\|P_{h,\Gamma_a'} (z^{\delta}- f) \|_{H^{-\frac{1}{2}}(\Gamma_a')}+ch\|  f \|_{H^{ \frac{3}{2}}(\Gamma_a')}\\
         \le &ch^{-1}\| z^{\delta}- f \|_{H^{-\frac{1}{2}}(\Gamma_a')}+ch \le ch^{-1} \delta^{\frac{4}{3}}+ch.
    \end{align*}
By taking $h\sim\delta^{\frac{2}{3}}$, we arrive at the estimate $\|P_h z^{\delta}- f \|_{H^{\frac{1}{2}}(\Gamma_a')}\le c\delta^{\frac{2}{3}} $. The intermediate case follows by interpolation.
\end{remark}

Since the admissible sets $\mathcal{A}$ and $\mathcal{B}$ contain only piecewise constant functions, we do not need to discretize the Robin coefficient $q$ in the regularized formulation \eqref{eqn:dis_functional}.
In view of the finite-dimensionality of the parameter space, a standard compactness argument shows that the discrete problem \eqref{eqn:dis_functional}-\eqref{eqn:dis_constraint_DR} has at least one global minimizer $q^*$. Next we derive bounds on the error between the exact Robin coefficient $q^{\dag}$ and a global discrete minimizer  $q^*$.
We make the following assumption on the regularity of problem data.
\begin{assum}\label{assum:regularity}
The flux  $g\in H^{\frac{1}{2}}(\Gamma_a)$ with
  $ \|g\|_{H^{\frac{1}{2}}(\Gamma_a)}\le c_g$, the exact data $f=u(q^\dag)|_{\Gamma_a'} \in H^2(\Gamma_a')$, and the noisy data $z_h^\delta\in C(\overline{\Gamma}_a')$ satisfies \eqref{eqn:noise}.
\end{assum}
First we give several \textit{a priori} estimates on the data fitting term.
\begin{lem}\label{lem:error_L2}
Let Assumption \ref{assum:regularity} hold, and $q^\dag\in \mathcal{A}$ or $\mathcal{B}$. Let $u(q^{\dag})$, $u_{N,h}(q^\dag)$ and $u_{D,h}^\delta(  q^{\dag})$ be the solutions of problems \eqref{eqn:problem_NR}, \eqref{eqn:dis_constraint_NR} and \eqref{eqn:dis_constraint_DR}, respectively. Then  there hold
    \begin{align*}
      & \|  u_{D,h}^\delta( q^{\dag})- u(q^{\dag}) \|_{L^2(\Omega)}\le c(h^{2(1-\epsilon)} + \delta^{\frac{4}{3}(1-\epsilon)} ),\qquad \| \nabla u_{D,h}^\delta( q^{\dag})- \nabla u(q^{\dag}) \|_{L^2(\Omega)} \le  c(h^{1-\epsilon} + \delta^{\frac{2}{3}(1-\epsilon)} ),\\
     &  \|  u_{N,h}( q^{\dag})- u(q^{\dag}) \|_{L^2(\Omega)}+ h^{1-\epsilon}\| \nabla u_{N,h}( q^{\dag})- \nabla u(q^{\dag}) \|_{L^2(\Omega)} \le ch^{2(1-\epsilon)}.
    \end{align*}
\end{lem}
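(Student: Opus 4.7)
The plan is to handle the three estimates separately; the Neumann bound follows immediately from a Ritz-projection identification, while each Dirichlet bound splits naturally into a noise-free FEM discretization error plus a perturbation driven by the noisy Dirichlet datum.

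I would first dispose of the bound on $u_{N,h}(q^\dag)-u(q^\dag)$. Comparing the weak forms of \eqref{eqn:problem_NR} and \eqref{eqn:dis_constraint_NR} shows that $u_{N,h}(q^\dag)$ coincides with the Ritz projection $R_h(q^\dag)\,u(q^\dag)$. Hence the third inequality follows immediately from \eqref{eqn:error_Rh} at $s=2-\epsilon$ combined with the near-$H^2$ regularity \eqref{eqn:reg-0}, up to a relabeling of $\epsilon$.

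For the Dirichlet part I would introduce an auxiliary FEM solution $\tilde u_{D,h}$ given by \eqref{eqn:dis_constraint_DR} with $z_h^\delta$ replaced by the nodal interpolant $\mathcal{I}_h^\partial f$, and decompose
\begin{equation*}
u_{D,h}^\delta(q^\dag)-u(q^\dag)=\bigl(\tilde u_{D,h}-u(q^\dag)\bigr)+\bigl(u_{D,h}^\delta-\tilde u_{D,h}\bigr).
\end{equation*}
The first piece is the standard FEM error for \eqref{eqn:problem_DR} with exact data. It satisfies the Galerkin orthogonality $(\nabla(\tilde u_{D,h}-u),\nabla v_h)+(q(\tilde u_{D,h}-u),v_h)_{L^2(\Gamma_i)}=0$ for $v_h\in V_h^{0,a}$; testing with $\tilde u_{D,h}-\mathcal{I}_h u\in V_h^{0,a}$ and using Lemma \ref{lem:interp-ineq} together with \eqref{eqn:error_Ih} at $s=2-\epsilon$ yields $\|\tilde u_{D,h}-u\|_{H^1(\Omega)}\le c h^{1-\epsilon}$. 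An Aubin--Nitsche argument against the mixed dual problem, whose $H^{2-\epsilon}$ regularity is realised by the graded mesh \eqref{eqn:mesh}, upgrades this to $\|\tilde u_{D,h}-u\|_{L^2(\Omega)}\le c h^{2(1-\epsilon)}$, the residual boundary term $(\mathcal{I}_h^\partial f-f,\partial_n\phi)_{\Gamma_a'}$ being absorbed by the $O(h^2)$ interpolation error of $f\in H^2(\Gamma_a')$.

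The second (noise) piece $e:=u_{D,h}^\delta-\tilde u_{D,h}\in V_h$ satisfies the discrete homogeneous equation in the coercive form with prescribed boundary data $e|_{\Gamma_a'}=z_h^\delta-\mathcal{I}_h^\partial f$. Choosing an arbitrary $V_h$-lifting $E_h$ of this datum, testing the variational identity for $e-E_h\in V_h^{0,a}$ against itself, and applying Lemma \ref{lem:interp-ineq} yields
\begin{equation*}
\|e\|_{H^1(\Omega)}\le c\,\|z_h^\delta-\mathcal{I}_h^\partial f\|_{H^{1/2}(\Gamma_a')}\le c(\delta^{2/3}+h^{1-\epsilon}),
\end{equation*}
using \eqref{eqn:noise} at $s=\tfrac12$ and $\|\mathcal{I}_h^\partial f-f\|_{H^{1/2}(\Gamma_a')}\le c h^{3/2}\|f\|_{H^2(\Gamma_a')}$. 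For the $L^2$ bound, I would again invoke Aubin--Nitsche against the same mixed dual problem: after Galerkin orthogonality, the only surviving boundary contribution is the duality pairing $\langle z_h^\delta-\mathcal{I}_h^\partial f,\partial_n\phi\rangle_{H^{-1/2},H^{1/2}(\Gamma_a')}$, which is controlled by \eqref{eqn:noise} at $s=-\tfrac12$ and the duality-interpolation estimate already exploited in Remark \ref{rem:noise}, giving $\|e\|_{L^2(\Omega)}\le c(\delta^{4/3}+h^{2(1-\epsilon)})$. Summing the two decompositions and using $\delta^{4/3}\le\delta^{4(1-\epsilon)/3}$ and $\delta^{2/3}\le\delta^{2(1-\epsilon)/3}$ for $\delta\le 1$ produces the two stated Dirichlet bounds.

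The main technical difficulty I anticipate is the last Aubin--Nitsche step: the dual solution $\phi$ only enjoys $H^{2-\epsilon}$ regularity because of the Dirichlet--Neumann corner singularities at $s_{N+1},s_{N+2}$, so the normal trace $\partial_n\phi$ sits essentially at the $H^{-1/2+\epsilon'}$ threshold on $\Gamma_a'$, and extracting the sharp $\delta^{4/3}$ rate rather than the naive $\delta^{2/3}$ requires simultaneously using the negative-order noise assumption \eqref{eqn:noise} at $s=-\tfrac12$ and the mesh grading \eqref{eqn:mesh} tailored to those corner singularities.
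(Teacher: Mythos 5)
Your overall architecture matches the paper's: the Neumann bound via the Ritz-projection identification, the splitting of the Dirichlet error into a noise-free FEM error (your $\tilde u_{D,h}$ is the paper's $u_{D,h}^0$ with data $\mathcal{I}_h^\partial f$), an energy/lifting argument for the $H^1$ noise contribution, and Aubin--Nitsche for the $L^2$ bounds. The Neumann part, the bound on $\tilde u_{D,h}-u$, and the $H^1$ bound on $e=u_{D,h}^\delta-\tilde u_{D,h}$ are all sound and essentially identical to the paper's treatment (the paper realizes your ``arbitrary $V_h$-lifting'' concretely as $\mathcal{I}_h G$ for a continuous harmonic lifting $G$ of the datum).

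The genuine gap is in the $L^2$ bound for the noise piece. Your Aubin--Nitsche argument leaves the boundary pairing $\langle z_h^\delta-\mathcal{I}_h^\partial f,\partial_n\psi\rangle_{\Gamma_a'}$, and you propose to estimate it in $H^{-1/2}\times H^{1/2}$ duality using \eqref{eqn:noise} at $s=-\tfrac12$. But the dual solution $\psi$ of the mixed problem has, by Proposition \ref{prop:sol_decomp}, a singular part $O(|x-s_j|^{\pi/(2\omega_j)})$ at the Dirichlet--Neumann transition points $s_{N+1},s_{N+2}$; when these sit on an edge ($\omega_j=\pi$) this is $O(r^{1/2})$, so $\partial_n\psi\sim r^{-1/2}$ on $\Gamma_a'$ near its endpoints. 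Hence $\partial_n\psi$ is not in $H^{1/2}(\Gamma_a')$ --- it is not even in $L^2(\Gamma_a')$, only in $H^{-\epsilon}(\Gamma_a')$ --- and the pairing can at best be estimated as $\|z_h^\delta-\mathcal{I}_h^\partial f\|_{H^{\epsilon}(\Gamma_a')}\|\partial_n\psi\|_{H^{-\epsilon}(\Gamma_a')}\lesssim\delta^{1-2\epsilon/3}\|e\|_{L^2(\Omega)}$, which loses a factor $\delta^{1/3}$ against the claimed $\delta^{4/3}$. The mesh grading \eqref{eqn:mesh} cannot rescue this: it improves discrete interpolation errors, not the regularity of the continuous dual solution, so your closing remark points at the right difficulty but the proposed fix does not work. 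The paper's proof is structured precisely to avoid this pairing: it introduces the \emph{continuous} transposition (very weak) solution $G$ with boundary datum $z_h^\delta-\mathcal{I}_h^\partial f$ on $\Gamma_a'$, bounds $\|G\|_{L^2(\Omega)}\le c\|z_h^\delta-\mathcal{I}_h^\partial f\|_{H^{-1/2}(\Gamma_a')}$ by the Lions--Magenes theory, and then runs the duality argument on $u_{D,h}^\delta-u_{D,h}^0-G$, whose trace on $\Gamma_a'$ vanishes, so no boundary term involving $\partial_n\psi$ ever appears. To repair your proof you would need to adopt this (or an equivalent) device rather than the direct boundary pairing.
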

\begin{proof}
Let $u=u(q^\dag)$, $u_{N,h}=u_{N,h}(q^\dag)$ and $u_{D,h}^\delta=u_{D,h}^\delta(  q^{\dag})$.
We only show the error estimates for  $u_{D,h}^\delta$, and the proof of that for $u_{N,h}$ is analogous.  Let $u_{D,h}^0\in V_h$ with $u_{D,h}^0=\mathcal{I}_h^{\partial} f\mbox{ on }\Gamma_a'$ ($\mathcal{I}_h^{\partial} $ is the restriction of  $\mathcal{I}_h$ onto $\Gamma_a'$) solve
\begin{equation*}
	(\nabla u_{D,h}^0,\nabla v_h)+(q u_{D,h}^0,v_h)_{L^2(\Gamma_i)}=(g,v_h)_{L^2(\Gamma_a\setminus\Gamma_a')},\quad \forall v_h\in V_h^{0,a}.
\end{equation*}
 By the triangle inequality, we have
\begin{equation*}    \|\nabla(u_{D,h}^\delta- u)\|_{L^2(\Omega)}=\|\nabla(u_{D,h}^\delta- u_D)\|_{L^2(\Omega)}\le \|\nabla(u_{D,h}^\delta- u_{D,h}^0)\|_{L^2(\Omega)}+\|\nabla(u_{D,h}^0- u_D)\|_{L^2(\Omega)}.
\end{equation*}
The weak formulations of $u_{D,h}^0$ and $u_D$ with the test function $v_h=u_{D,h}^0-\mathcal{I}_h u_D$ and Galerkin orthogonality yield
\begin{align*}
    c\|\nabla(u_{D,h}^0- u_D)\|_{L^2(\Omega)}^2\le & (\nabla (u_{D,h}^0-u_D),\nabla (u_{D,h}^0-u_D))+(q^\dag (u_{D,h}^0-u_D), u_{D,h}^0-u_D )_{L^2(\Gamma_i)}\\
    = & (\nabla (u_{D,h}^0-u_D),\nabla (\mathcal{I}_h u_D-u_D))+(q^\dag (u_{D,h}^0-u_D), \mathcal{I}_h u_D-u_D )_{L^2(\Gamma_i)}\\
    \le & c \| u_{D,h}^0- u_D \|_{H^1(\Omega)} \| \mathcal{I}_h u_D- u_D \|_{H^1(\Omega)}.
\end{align*}
This, the interpolation error estimate \eqref{eqn:error_Ih} and the regularity estimate \eqref{eqn:reg-0} imply
\begin{equation*}
    \|\nabla(u_{D,h}^0- u_D)\|_{L^2(\Omega)}\le c\| \mathcal{I}_h u_D- u_D \|_{H^1(\Omega)}\le ch^{1-\epsilon}.
\end{equation*}
To bound the term $\|\nabla(u_{D,h}^\delta- u_{D,h}^0)\|_{L^2(\Omega)}$, let $G$ be the solution to
\begin{equation*}
    \left\{
        \begin{aligned}
            -\Delta G&=0, &&\mbox{ in }\Omega,\\
            \partial_n G+ q^\dag G&=0, &&\mbox{ on }\Gamma_i,\\
            \partial_nG&=0, &&\mbox{ on }\Gamma_a\setminus \Gamma_a',\\
             G&=  z_h^\delta- \mathcal{I}_h^\partial f, &&\mbox{ on }\Gamma_a'.
        \end{aligned}
    \right.
\end{equation*}
The weak formulations of $u_{D,h}^\delta$ and $ u_{D,h}^0$ and the Cauchy-Schwartz inequality yield
\begin{equation*}
    \|u_{D,h}^\delta- u_{D,h}^0 \|_{H^1(\Omega)}^2\le c\|u_{D,h}^\delta- u_{D,h}^0 \|_{H^1(\Omega)}  \|\mathcal{I}_h G \|_{H^1(\Omega)}.
\end{equation*}
The stability of the Lagrange interpolation $\mathcal{I}_h$ in $H^s(\Omega)$ with $2s>d$, Assumption \ref{assum:regularity} and condition \eqref{eqn:noise} imply
\begin{align}
    \|u_{D,h}^\delta- u_{D,h}^0 \|_{H^1(\Omega)}& \le c   \|\mathcal{I}_h G \|_{H^1(\Omega)}
    \le  c\|\mathcal{I}_h G\|_{H^{1+\epsilon}(\Omega)} \le c\| G \|_{H^{1+\epsilon}(\Omega)}
    \le c\|  z_h^\delta- \mathcal{I}_h^\partial f \|_{H^{\frac{1}{2}+\epsilon}(\Gamma_a')}\nonumber\\
    & \le c(\|   z_h^\delta-f  \|_{H^{\frac{1}{2}+\epsilon}(\Gamma_a')} + \|  \mathcal{I}_h^\partial f -f  \|_{H^{\frac{1}{2}+\epsilon}(\Gamma_a')})\le  c(\delta^{\frac{2}{3}(1-\epsilon)} + h^{\frac{3}{2}-\epsilon}).\label{eqn:udh}
\end{align}
This proves the bound on $\|\nabla(u_{D,h}^\delta- u)\|_{L^2(\Omega)}$. Next, we bound $\|u_{D,h}^\delta-u\|_{L^2(\Omega)}$.  By the triangle inequality,
\begin{equation*}
    \|u_{D,h}^\delta- u \|_{L^2(\Omega)}=\|u_{D,h}^\delta- u_D \|_{L^2(\Omega)}\le \|u_{D,h}^\delta- u_{D,h}^0\|_{L^2(\Omega)}+\|u_{D,h}^0- u_D \|_{L^2(\Omega)}.
\end{equation*}
By the triangle inequality,
\begin{equation*}
    \|u_{D,h}^\delta- u \|_{L^2(\Omega)}=\|u_{D,h}^\delta- u_D \|_{L^2(\Omega)}\le \|G\|_{L^2(\Omega)}+\|u_{D,h}^\delta- u_{D,h}^0-G\|_{L^2(\Omega)}+\|u_{D,h}^0- u_D \|_{L^2(\Omega)}.
\end{equation*}
The elliptic regularity theory \cite[Chapter 2, Theorem 6.6]{Lions:1972}, the approximation error \eqref{eqn:error_Ih} of the interpolation operator $\mathcal{I}_h^\partial$, Assumption \ref{assum:regularity} and condition \eqref{eqn:noise} imply
\begin{equation*}
    \|G\|_{L^2(\Omega)}\le c\|z_h^\delta-\mathcal{I}_h^\partial f\|_{H^{-\frac{1}{2}}(\Gamma_a')}\le c\|z_h^\delta-  f\|_{H^{-\frac{1}{2}}(\Gamma_a')}+c\| \mathcal{I}_h^\partial f-f \|_{H^{-\frac{1}{2}}(\Gamma_a')} \le c(\delta^{\frac{4}{3}}+h^2).
\end{equation*}
For the term $ \|u_{D,h}^\delta- u_{D,h}^0-G \|_{L^2(\Omega)}$, we employ a duality argument.  Let $\psi$ solve
    \begin{equation*}
        \left\{
            \begin{aligned}
                 -\Delta \psi & = u_{D,h}^\delta- u_{D,h}^0-G, && \mbox{in }\Om, \\
                \partial_n\psi+q^\dag \psi & =0, && \mbox{on } \Gamma_i,\\
                \partial_n \psi  & =0, && \mbox{on } \Gamma_a\setminus\Gamma_a',\\
                \psi & =0, && \mbox{on } \Gamma_a'.\\
            \end{aligned}
        \right.
    \end{equation*}
Then integration by parts, the weak formulations of $u_{D,h}^\delta$, $u_{D,h}^0 $ and $G$, and the Galerkin orthogonality yield
\begin{align*}
&\|u_{D,h}^\delta- u_{D,h}^0-G\|_{L^2(\Omega)}^2= - (u_{D,h}^\delta- u_{D,h}^0-G,\Delta \psi)\\
=&(\nabla(u_{D,h}^\delta  - u_{D,h}^0-G),\nabla\psi)
+(q^\dag(u_{D,h}^\delta- u_{D,h}^0-G),\psi)_{L^2(\Gamma_i)} -(u_{D,h}^\delta- u_{D,h}^0-G,\partial_n\psi)_{L^2(\Gamma_a')}\\
 =&(\nabla(u_{D,h}^\delta- u_{D,h}^0-G),\nabla(\psi-\mathcal{I}_h \psi))+(q^\dag(u_{D,h}^\delta- u_{D,h}^0-G),\psi-\mathcal{I}_h\psi)_{L^2(\Gamma_i)}\\
\le & c\big(\|u_{D,h}^\delta- u_{D,h}^0 \|_{H^1(\Omega)}+\|G\|_{H^1(\Omega)}  \big)\| \psi-\mathcal{I}_h\psi \|_{H^1(\Omega)}.
\end{align*}
By Proposition \ref{prop:sol_decomp}, $\psi$ has weak singularities due to the mixed Dirichlet-Neumann boundary conditions. The construction of the locally refined mesh $\mathcal{T}_h$ ensures the following optimal error estimate \cite[Lemma 19.10]{Thomee:2007}
    \begin{equation*}
        \| \psi-\mathcal{I}_h\psi \|_{H^1(\Omega)}\le ch^{1-\epsilon}\| u_{D,h}^\delta -u_{D,h}^0-G\|_{L^2(\Omega)}.
    \end{equation*}
By the bound \eqref{eqn:udh}, we get
    \begin{equation*}
        \|u_{D,h}^\delta -u_{D,h}^0-G\|_{L^2(\Omega)}
        \le c(h^{1-\epsilon}\delta^{\frac{2}{3}(1-\epsilon)}+h^{\frac{5}{2}-2\epsilon}).
\end{equation*}
Similarly, let $\phi$ solve
    \begin{equation*}
        \left\{
            \begin{aligned}
                 -\Delta \phi & =u_{D,h}^0-u_D, && \mbox{in }\Om, \\
                \partial_n\phi+q^\dag \phi & =0, && \mbox{on } \Gamma_i,\\
                \partial_n\phi  & =0, && \mbox{on } \Gamma_a\setminus\Gamma_a',\\
                \phi & =0, && \mbox{on } \Gamma_a'.\\
            \end{aligned}
        \right.
    \end{equation*}
Then integration by parts, the weak formulations of $u_D$ and $u_{D,h}^0$ and the Galerkin orthogonality yield
    \begin{align*}
         &\|u_{D,h}^0 -u_D\|_{L^2(\Omega)}^2= (\nabla(u_{D,h}^0-u_D),\nabla\phi)-(u_{D,h}^0-u_D,\partial_n\phi)_{L^2(\partial\Omega)} \\
        \le & c \|u_{D,h}^0-u_D \|_{H^1(\Omega)}\| \phi-\mathcal{I}_h\phi \|_{H^1(\Omega)} +c\|\mathcal{I}_h^\partial f-f\|_{L^2(\Gamma_a')}\|\partial_n\phi\|_{L^2(\Gamma_a')}.
    \end{align*}
By the construction of the locally refined mesh $\mathcal{T}_h$, the elliptic regularity \eqref{eqn:reg-0} and the regularity $f\in H^2(\Gamma_a')$ (cf. Assumption \ref{assum:regularity}), we deduce 
$$\|u_{D,h}^0 -u_D\|_{L^2(\Omega)}\le ch^{2(1-\epsilon)}.$$
Then the triangular inequality yields the desired result.
\end{proof}
Now we can state the following \textit{a priori} estimate on the objective $J_{\alpha,h}(q^*)$.
\begin{lem}\label{lem:apriori}
Let   Assumption \ref{assum:regularity} hold, and $q^*$ be a minimizer of problem \eqref{eqn:dis_functional}-\eqref{eqn:dis_constraint_DR}. Then  there holds
    \begin{equation*}
       J_{\alpha,h}(q^*)\le c(\eta +\alpha \eta^2),\quad \mbox{with }\eta = h^{2(1-\epsilon)}+\delta^{\frac{4}{3}(1-\epsilon)}.
    \end{equation*}
\end{lem}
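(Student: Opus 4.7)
The plan is to exploit the minimality of $q^*$ by bounding the objective at the exact coefficient $q^\dag$, and then to insert the common benchmark $u(q^\dag)$ into each of the three terms so that Lemma~\ref{lem:error_L2} can be applied.

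First, since $q^\dag \in \mathcal{A}$ (or $\mathcal{B}$) is admissible and $q^*$ is a discrete minimizer,
\begin{equation*}
J_{\alpha,h}(q^*) \le J_{\alpha,h}(q^\dag).
\end{equation*}
So it suffices to bound $J_{\alpha,h}(q^\dag)$. The key observation is that $u(q^\dag)$ solves both the Neumann problem \eqref{eqn:problem_NR} and the mixed problem \eqref{eqn:problem_DR} (with exact data $f$), so by the triangle inequality
\begin{equation*}
\|u_{N,h}(q^\dag) - u_{D,h}^\delta(q^\dag)\|_X \le \|u_{N,h}(q^\dag) - u(q^\dag)\|_X + \|u(q^\dag) - u_{D,h}^\delta(q^\dag)\|_X
\end{equation*}
for $X = L^2(\Omega)$ or $X = H^1(\Omega)$, and Lemma~\ref{lem:error_L2} handles both summands.

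Next I would estimate each of the three terms in $J_{\alpha,h}(q^\dag)$. The gradient term is directly controlled by Lemma~\ref{lem:error_L2} by $c(h^{1-\epsilon} + \delta^{\frac{2}{3}(1-\epsilon)})^2 \le c\eta$. The $L^2(\Omega)$ term (with weight $\alpha$) is bounded by $c\alpha(h^{2(1-\epsilon)} + \delta^{\frac{4}{3}(1-\epsilon)})^2 = c\alpha\eta^2$. For the boundary term, since $q^\dag$ is uniformly bounded and $\Gamma_i \subset \partial\Omega$, I apply the trace interpolation inequality of Lemma~\ref{lem:interp-ineq} to $v = u_{N,h}(q^\dag) - u_{D,h}^\delta(q^\dag)$, obtaining
\begin{equation*}
\|q^{\dag,1/2}(u_{N,h}(q^\dag) - u_{D,h}^\delta(q^\dag))\|_{L^2(\Gamma_i)}^2 \le c \|v\|_{L^2(\Omega)} \|v\|_{H^1(\Omega)} \le c \eta \cdot \eta^{1/2} = c\eta^{3/2} \le c\eta
\end{equation*}
for $\eta$ sufficiently small (which can be absorbed in the constant). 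Summing the three bounds gives $J_{\alpha,h}(q^\dag) \le c(\eta + \alpha \eta^2)$, completing the proof.

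I do not anticipate any serious obstacle: the whole argument reduces to using minimality plus the previous lemma, with the only mildly delicate step being the boundary term, where the trace interpolation inequality produces a $\eta^{3/2}$ bound that is cleanly dominated by the $\eta$ target. Importantly, one should check that the constants in Lemma~\ref{lem:error_L2} and the bound on $q^\dag$ are independent of $\alpha$, $h$, $\delta$, which holds by the standing assumptions on $\mathcal{A}$, $\mathcal{B}$ and by Assumption~\ref{assum:regularity}.
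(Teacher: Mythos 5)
Your proposal is correct and follows essentially the same route as the paper: minimality gives $J_{\alpha,h}(q^*)\le J_{\alpha,h}(q^\dag)$, the triangle inequality through $u(q^\dag)$ combined with Lemma \ref{lem:error_L2} bounds the gradient and weighted $L^2(\Omega)$ terms by $c\eta$ and $c\alpha\eta^2$, and Lemma \ref{lem:interp-ineq} handles the boundary term with the same $\eta^{3/2}\le c\eta$ absorption (valid since $\eta$ is bounded). No gaps.
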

\begin{proof}
Since $q^*$ minimizes $J_{\alpha,h}$, we have $J_{\alpha,h}(q^*)\le J_{\alpha,h}( q^{\dag})$. It suffices to bound $J_{\alpha,h}(q^\dag)$. Let $u^\dag=u(q^\dag)$ be the solution of problem \eqref{eqn:problem_NR} with the Robin coefficient $q^\dag$. By Lemma \ref{lem:error_L2}, we have
    \begin{align*}
        \|\nabla u_{N,h}(q^\dag)-\nabla u_{D,h}^\delta(q^\dag) \|_{L^2(\Omega)}&\le  \|\nabla u_{N,h}(q^\dag)-\nabla u^\dag \|_{L^2(\Omega)}+ \|\nabla u^\dag-\nabla u_{D,h}^\delta(q^\dag) \|_{L^2(\Omega)}\le c(h^{1-\epsilon}+\delta^{\frac{2}{3}(1-\epsilon)}),\\
        \| u_{N,h}(q^\dag)- u_{D,h}^\delta(q^\dag) \|_{L^2(\Omega)}&\le  \| u_{N,h}(q^\dag)- u^\dag \|_{L^2(\Omega)}+ \| u^\dag- u_{D,h}^\delta(q^\dag) \|_{L^2(\Omega)}\le c(h^{2(1-\epsilon)}+\delta^{\frac{4}{3}(1-\epsilon)}).
    \end{align*}
    Furthermore, the box constraint, Lemma \ref{lem:interp-ineq}, and Young's inequality imply
    \begin{align*}
        &\|  q^{\frac{1}{2}}(u_{N,h}(q^\dag)-  u_{D,h}^\delta(q^\dag)) \|_{L^2(\Gamma_i)}
        \le  c\|   u_{N,h}(q^\dag)-  u_{D,h}^\delta(q^\dag) \|_{L^2(\Gamma_i)}\\
        \le & c\|   u_{N,h}(q^\dag)-  u_{D,h}^\delta(q^\dag) \|_{L^2(\Omega)}^{\frac{1}{2}}\|   u_{N,h}(q^\dag)-  u_{D,h}^\delta(q^\dag) \|_{H^1(\Omega)}^{\frac{1}{2}}
        %\le & c(h^{3(1-\epsilon)}+h^{1-\epsilon}\delta^{\frac{4}{3}(1-\epsilon)}+h^{2(1-\epsilon)}\delta^{\frac{2}{3}(1-\epsilon)}+\delta^{2(1-\epsilon)})^{\frac{1}{2}}
        \le c(h^{\frac{3}{2}(1-\epsilon)}+\delta^{1-\epsilon}).
    \end{align*}
This completes the proof of the lemma.
\end{proof}

Now we can state an error estimate on the approximation $q^*$.
\begin{thm}\label{thm:error_estimate}
Let   Assumption \ref{assum:regularity} hold, and $q^*$ be a minimizer of problem \eqref{eqn:dis_functional}-\eqref{eqn:dis_constraint_DR}. Let  $\eta=h^{2(1-\epsilon)} + \delta^{\frac{4}{3}(1-\epsilon)}$.
%\eta=\alpha^{-\frac{1}{2}}(h^{1-\epsilon}+\delta^{\frac{2}{3}(1-\epsilon)})+\alpha^{-\frac{1}{4}}(h^{1-\epsilon}+\delta^{\frac{2}{3}(1-\epsilon)}) + h^{\frac{3}{2}(1-\epsilon)}+\delta^{1-\epsilon}  +\alpha^{\frac{1}{4}}(h^{2(1-\epsilon)}+\delta^{\frac{4}{3}(1-\epsilon)})$.
\begin{itemize}
    \item[{\rm(i)}] If $q^\dag\in \mathcal{A}$, then there exists $c>0$ independent of $h$, $\delta$ and $\alpha$ such that
$$\| q^*-q^{\dag} \|_{L^\infty(\Gamma_i)} \le  c((\alpha^{-\frac14} +\alpha^{-\frac12})(\eta^\frac12+\alpha^\frac12\eta)+\eta^\frac{3}{4}).$$
\item[{\rm(ii)}] If $q^\dag\in \mathcal{B}$, then there exists $c>0$ independent of $h,\delta,\alpha$, $\theta=\frac{1-2\epsilon}{3-2\epsilon}$ with $ \epsilon\in(0,\frac{1}{2})$, and $\kappa=\kappa(c_g, \underline{c}_q, \Bar{c}_q, c_0, \Omega) \in(0,1)$ such that 
$$\| q^*-q^{\dag} \|_{L^\infty(\Gamma_i)} \le c((\alpha^{-\frac14} +\alpha^{-\frac12})(\eta^\frac12+\alpha^\frac12\eta)+\eta^\frac34)^{\kappa\theta}.$$
\end{itemize}
\end{thm}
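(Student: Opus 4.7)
The plan is to reduce the error in $q$ on $\Gamma_i$ to the boundary observation error $\|u(q^*) - u(q^\dag)\|$ on $\Gamma_a'$ via the conditional stability estimates cited in the introduction (\cite[Theorem 2.4]{Sincich:2007} for case (i), \cite[Theorem 4.1]{Hu:2015} for case (ii)), and then bound that observation error by combining Lemma \ref{lem:error_L2} with the \emph{a priori} bound on the functional in Lemma \ref{lem:apriori}. Since $u(q^\dag)|_{\Gamma_a'}=f$ and $u_{D,h}^\delta(q^*)|_{\Gamma_a'}=z_h^\delta$, I would split
\begin{equation*}
\|u(q^*)-u(q^\dag)\|_{L^2(\Gamma_a')}\le \|u(q^*)-u_{N,h}(q^*)\|_{L^2(\Gamma_a')} + \|u_{N,h}(q^*)-u_{D,h}^\delta(q^*)\|_{L^2(\Gamma_a')} + \|z_h^\delta - f\|_{L^2(\Gamma_a')},
\end{equation*}
and treat the three pieces separately.

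For the first piece, I would apply the same argument as in Lemma \ref{lem:error_L2}, but with $q^*\in\mathcal{A}$ (or $\mathcal{B}$) in place of $q^\dag$ — the constants are uniform thanks to the two-sided bounds on $q^*$ — to obtain $\|u(q^*)-u_{N,h}(q^*)\|_{L^2(\Omega)}\le c h^{2(1-\epsilon)}$ and $\|\nabla(u(q^*)-u_{N,h}(q^*))\|_{L^2(\Omega)}\le c h^{1-\epsilon}$, and then invoke the interpolation inequality in Lemma \ref{lem:interp-ineq} to deduce $\|u(q^*)-u_{N,h}(q^*)\|_{L^2(\Gamma_a')}\le c h^{3(1-\epsilon)/2}\le c\eta^{3/4}$. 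For the third piece, the noise assumption \eqref{eqn:noise} with $s=0$ gives $\|z_h^\delta - f\|_{L^2(\Gamma_a')}\le c\delta^{2/3}$, and since $\delta^{4(1-\epsilon)/3}\le\eta$ this is bounded by $c\eta^{3/4}$ (absorbing harmless $\epsilon$-corrections).

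The central piece is the middle term. From Lemma \ref{lem:apriori} the three summands of $J_{\alpha,h}(q^*)$ are each bounded by $c(\eta+\alpha\eta^2)$, which yields
\begin{equation*}
\|\nabla(u_{N,h}(q^*)-u_{D,h}^\delta(q^*))\|_{L^2(\Omega)}\le c(\eta^{1/2}+\alpha^{1/2}\eta),\qquad \|u_{N,h}(q^*)-u_{D,h}^\delta(q^*)\|_{L^2(\Omega)}\le c\alpha^{-1/2}(\eta^{1/2}+\alpha^{1/2}\eta).
\end{equation*}
Applying Lemma \ref{lem:interp-ineq} to the difference $w=u_{N,h}(q^*)-u_{D,h}^\delta(q^*)$ — which controls $\|w\|_{L^2(\Gamma_a')}$ by $\|w\|_{L^2(\Omega)}^{1/2}\|w\|_{H^1(\Omega)}^{1/2}$ — and using $(1+\alpha^{-1/2})^{1/2}\le 1+\alpha^{-1/4}$, I arrive at $\|w\|_{L^2(\Gamma_a')}\le c(\alpha^{-1/4}+\alpha^{-1/2})(\eta^{1/2}+\alpha^{1/2}\eta)$. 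Summing the three bounds gives the quantity appearing inside the parentheses in the theorem. Plugging this into the Lipschitz stability estimate closes case (i); case (ii) follows by raising this bound to the power $\kappa\theta$, where the exponent $\theta=(1-2\epsilon)/(3-2\epsilon)$ arises from interpolating the observation between $L^2(\Gamma_a')$ and the higher-order Sobolev space in which the Hölder stability of \cite[Theorem 4.1]{Hu:2015} is originally formulated.

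The main obstacle is matching the $\alpha$-exponents consistently: the regularization weight forces the $L^2(\Omega)$ bound on $w$ to carry an $\alpha^{-1/2}$, while the gradient bound is $\alpha$-free, and one must carefully track how these combine under the $L^2(\Omega)$--$H^1(\Omega)$ interpolation on the trace so that the final rate degrades only by the $(\alpha^{-1/4}+\alpha^{-1/2})$ prefactor seen in the statement. The Hölder case adds the extra subtlety of selecting the observation norm compatible with the stability estimate of \cite{Hu:2015}, which is what forces the specific form of $\theta$ through an interpolation argument.
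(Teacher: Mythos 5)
Your proposal follows essentially the same route as the paper: the same reduction via the Lipschitz/H\"older stability estimates of \cite{Sincich:2007} and \cite{Hu:2015} (with the same $L^2$--$H^{\frac32-\epsilon}$ interpolation producing $\theta$), the same three-term triangle-inequality split on $\Gamma_a'$, and the same combination of Lemma \ref{lem:apriori} with Lemma \ref{lem:interp-ineq} yielding the $(\alpha^{-\frac14}+\alpha^{-\frac12})(\eta^{\frac12}+\alpha^{\frac12}\eta)$ factor. The only slip is that \eqref{eqn:noise} at $s=0$ gives $\|z_h^\delta-f\|_{L^2(\Gamma_a')}\le c\delta$, not $c\delta^{\frac23}$; this is harmless — indeed it is what saves your absorption step, since $\delta\le\delta^{1-\epsilon}\le\eta^{\frac34}$ holds whereas $\delta^{\frac23}\le c\,\eta^{\frac34}$ would fail for small $\epsilon$ as $\delta\to0$.
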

\begin{proof}
In case (i), since the constant portions are fixed, the following stability result \cite[Theorem 2.4]{Sincich:2007} holds 
    \begin{equation*}
       \| q^*-q^{\dag} \|_{L^\infty(\Gamma_i)} \le c\| u(q^*)-u(q^{\dag}) \|_{L^2(\Gamma_a')}.
    \end{equation*}
In case (ii), both $q^{\dag}$ and $q^*$ are piecewise constant with at most $N$ discontinuous points and each piece has a measure larger than $c_0$. By \cite[Theorem 4.1]{Hu:2015} and the interpolation inequality, for $\theta=\frac{1-2\epsilon}{3-2\epsilon}$ and $\kappa\equiv \kappa(c_g,  \underline{c}_q,\Bar{c}_q,c_0,\Omega) \in(0,1)$, we have
    \begin{equation*}
       \| q^*-q^{\dag} \|_{L^\infty(\Gamma_i)} \le  c\| u(q^*)-u(q^{\dag}) \|_{H^1(\Gamma_a')}^{\kappa} \le c\| u(q^*)-u(q^{\dag}) \|_{L^2(\Gamma_a')}^{\kappa\theta}\| u(q^*)-u(q^{\dag}) \|_{H^{\frac{3}{2}-\epsilon}(\Gamma_a')}^{\kappa (1-\theta)} ,
    \end{equation*}
 Then the trace theorem and the regularity estimate \eqref{eqn:reg-0} imply
    \begin{equation*}
        \| u(q^*)-u(q^{\dag}) \|_{H^{\frac{3}{2}-\epsilon}(\Gamma_a')} \le c\| u(q^*)-u(q^{\dag}) \|_{H^{2-\epsilon}(\Omega)} \le c(\epsilon, c_g,\underline{c}_q,\Bar{c}_q,c_0).
    \end{equation*} 
Thus, in either case, it suffices to estimate $\|u(q^*)-u(q^{\dag}) \|_{L^2(\Gamma_a')} $. By the triangle inequality, we obtain
    \begin{align*}
        &\|u(q^*)-u(q^{\dag}) \|_{L^2(\Gamma_a')}
        \le \|u(q^*)-u_{N,h}(q^*) \|_{L^2(\Gamma_a')} + \|u_{N,h}(q^*)-  z_h^\delta \|_{L^2(\Gamma_a')} +\| z_h^\delta-u(q^{\dag}) \|_{L^2(\Gamma_a')}.
    \end{align*}
By repeating the argument for Lemma \ref{lem:error_L2} and the regularity of $u(q^*)$, we get
    \begin{align*}
        \|u(q^*)-u_{N,h}(q^*) \|_{L^2(\Gamma_a')}&\le c \|u(q^*)-u_{N,h}(q^*) \|_{L^2(\Omega)}^{\frac{1}{2}}\|u(q^*)-u_{N,h}(q^*) \|_{H^1(\Omega)}^{\frac{1}{2}}\\
        &\le ch^{\frac{3}{2}(1-\epsilon)}\| u(q^*) \|_{H^{2-\epsilon}(\Omega)}\le c(\epsilon, c_g,\underline{c}_q,\Bar{c}_q,c_0)h^{\frac{3}{2}(1-\epsilon)}.
       \end{align*}
By Lemma \ref{lem:interp-ineq}, we derive
    \begin{align*}
        &\|u_{N,h}(q^*)-  z_h^\delta \|_{L^2(\Gamma_a')}
        =\|u_{N,h}(q^*)-u_{D,h}^\delta(q^*)  \|_{L^2(\Gamma_a')}\\
        \le & c\|u_{N,h}(q^*)-u_{D,h}^\delta(q^*)  \|_{L^2(\Omega)}^{\frac{1}{2}} \|u_{N,h}(q^*)-u_{D,h}^\delta(q^*)  \|_{H^1(\Omega)}^{\frac{1}{2}}\\
        \le & c(\alpha^{-1}J_{\alpha,h}(q^*))^{\frac{1}{4}} (J_{\alpha,h}(q^*)+\alpha^{-1}J_{\alpha,h}(q^*))^{\frac{1}{4}}.
    \end{align*}
Now Lemma \ref{lem:apriori} implies $J_{\alpha,h}(q^*)^\frac{1}{2}\leq \eta^\frac12 + \alpha^\frac12\eta$, we deduce
\begin{align*}
    & (\alpha^{-1}J_{\alpha,h}(q^*))^{\frac{1}{4}} (J_{\alpha,h}(q^*)+\alpha^{-1}J_{\alpha,h}(q^*))^{\frac{1}{4}} \leq (\alpha^{-\frac14} +\alpha^{-\frac12})J_{\alpha,h}(q^*)^{\frac12}
      \le  (\alpha^{-\frac14} +\alpha^{-\frac12})(\eta^\frac12+\alpha^\frac12\eta).%=c\big((\alpha^{-\frac{1}{2}}+\alpha^{-\frac{1}{4}})\eta^\frac12)+ \eta^\frac{3}{4} +\alpha^{\frac{1}{4}}\eta \big)
\end{align*}
The definition \eqref{eqn:noise} of the noise level implies
    \begin{align*}
        \|  z_h^\delta-u(q^{\dag}) \|_{L^2(\Gamma_a')}= \|  z_h^\delta-f \|_{L^2(\Gamma_a')}\le c\delta.
       \end{align*}
Combining the preceding estimates with the stability estimates yields the assertion in cases (i) and (ii).
\end{proof}
\begin{remark}\label{rem:conv_rate_ell}
Theorem \ref{thm:error_estimate} provides a guideline for the \textit{a priori} choice of the algorithmic parameters $h$ and $\alpha$, in relation with $\delta$. The choice $h\sim \delta^{\frac{2}{3}}$ and $\alpha\sim \delta^{-\frac{4}{3}}$ yields a convergence rate $\| q_h^*-q^{\dag} \|_{L^\infty(\Gamma_i)} \le c\delta^{1-\epsilon} $. The convergence rate is nearly optimal when compared with the Lipschitz stability result \cite[Theorem 2.4]{Sincich:2007}. In practice, the choice of $\alpha$ greatly affects the reconstruction accuracy: Too large $\alpha$ leads to a predominant $L^2(\Omega)$-norm matching term, which neglects the energy information in the functional \eqref{eqn:cts_functional}. In practice, we choose parameter $\alpha\sim \delta^{-\frac{4}{3}}$ such that at the initial state, the magnitude of  error $ \alpha\|u_{N,h}-u_{D,h}^\delta \|_{L^2(\Omega)}^2$ is comparable with $ \|\nabla u_{N,h}-\nabla u_{D,h}^\delta \|_{L^2(\Omega)}^2+  \| q^{\frac{1}{2}}(u_{N,h}- u_{D,h}^\delta) \|_{L^2(\Omega)}^2 $.
\end{remark}

\section{Inverse Robin problem in the parabolic case}\label{sec:parabolic}

In this section, we extend the reconstruction algorithm in Section \ref{sec:elliptic} to the parabolic case:
\begin{equation}\label{eqn:para_robin}
    \left\{
        \begin{aligned}
            \partial_t u -\Delta u & =0, && \mbox{in }\Om\times (0,T], \\
            \partial_n u+qu & =0, && \mbox{on } \Gamma_i\times (0,T],\\
            \partial_n u & =g, && \mbox{on } \Gamma_a\times (0,T],\\
            u(0) & =u_0, && \mbox{in } \Omega.
        \end{aligned}
    \right.
\end{equation}
The inverse Robin problem is to recover the piecewise constant Robin coefficient $q(x)$ from the terminal boundary observation $u$ on $\Gamma_a'\times\{T\}$.
Below for a bivariate function $f(x,t)$, we often write $f(t)=f(\cdot,t)$. In the analysis, we impose the following condition on the problem data.
\begin{assum}\label{assum:regularity_para}
The problem data $g,u_0$ and $f$ and the noisy measurement $z^\delta$ satisfy following conditions.
\begin{itemize}
    \item[{\rm(i)}]  The time-independent flux $g$ satisfies $g\in H^{\frac{1}{2}}(\Gamma_a)$, the initial data $u_0\in H^{2-\epsilon}(\Omega)$, $\forall \epsilon\in(0,1]$ satisfying the compatibility condition $\partial_n u_0 +q u_0=0$ on $\Gamma_i$ and  $\partial_n u_0  =g$ on $\Gamma_a$  and there exist $c_0,c_g>0$ such that
    \begin{equation*}
        \|u_0\|_{H^{2-\epsilon} (\Omega)}\le c_\epsilon \quad\mbox{and}\quad \|g\|_{H^{\frac{1}{2}}(\Gamma_a)}\le c_g.
    \end{equation*}
    \item[{\rm(ii)}] The exact data $f=u(q^\dag)|_{\Gamma_a'\times\{T\} }\in H^2(\Gamma_a')$, and the noisy data $z_h^\delta\in C(\overline{\Gamma}_a')$ satisfies
    \begin{equation}\label{eqn:para_noise}
        \|z_h^{\delta}- f \|_{H^{s}(\Gamma_a')}\le C\delta^{1-2s/3},\quad \text{with } -\tfrac{1}{2}\le s \le \tfrac{1}{2}.
    \end{equation}
\end{itemize}
\end{assum}

Under Assumption \ref{assum:regularity_para}(i), problem \eqref{eqn:para_robin} has a unique solution $u\in C([0,T];H^1(\Omega))$. Moreover, $u(t)$ satisfies the regularity decomposition in  Proposition \ref{prop:sol_decomp}, for all $t\in [0,T]$. For the regularized reconstruction, we employ the following Kohn-Vogelius type functional:
\begin{equation}\label{eqn:para_cts_functional}
\min_{q\in \mathcal{A}} J_{\al}(q)= \|\nabla u_N(T)-\nabla u_D  \|_{L^2(\Omega)}^2+\|  q^{\frac{1}{2}}(u_N(T)-  u_D ) \|_{L^2(\Gamma_i)}^2+\alpha\|  u_N(T)-  u_D  \|_{L^2(\Omega)}^2,
\end{equation}
where $\al>0$ is the penalty parameter, $ u_N  \in H^1\II$ is the weak solution of problem \eqref{eqn:para_robin} and $ u_D  \in H^1\II$ is the weak solution of the elliptic problem
\begin{equation}\label{eqn:para_DR}
    \left\{
        \begin{aligned}
            -\Delta u_D & =-\partial_t u_N(T), && \mbox{in }\Om, \\
            \partial_nu_D+qu_D & =0, && \mbox{on } \Gamma_i,\\
            \partial_n u_D & =g, && \mbox{on } \Gamma_a\setminus\Gamma_a',\\
            u_D & =f, && \mbox{on } \Gamma_a',
        \end{aligned}
    \right.
\end{equation}
with $f=u(q^\dag)|_{\Gamma_a'\times\{T\}}$ being the exact data. Note that the formulation of the functional $J_\alpha(q)$ involves an elliptic and a parabolic problem. This choice is motivated by the availability of the data at the terminal time $T$ only.

\subsection{Stability estimate}\label{subsec:stab_para}
Now we derive new stability estimates for the parabolic inverse Robin problem. Let $A=A(q)$ be the $L^2(\Omega)$ realization of the  operator $-\Delta$ with its domain $D(A)\coloneqq \{v\in L^2(\Omega) \,:\, -\Delta v\in L^2(\Omega),\, \partial_n v+qv=0 \mbox{ on }\Gamma_i,\, \partial_n v =0 \mbox{ on }\Gamma_a \}$. The spectrum of the operator $A$ consists of positive eigenvalues $\{\lambda_j\}_{j=1}^{\infty}$ (ordered nondecreasingly), and the eigenfunctions $\{\phi_j\}_{j=1}^{\infty}$ can be chosen to form an orthonormal basis $L^2(\Omega)$. For any $s\geq0$, we define the fractional power $A^s$ by
$A^s v=\sum_{j=1}^{\infty}\lambda_j^s (v,\phi_j)\phi_j$, with its domain $D(A^s) = \{v\in L^2(\Omega):\, A^s v\in L^2(\Omega)\}$.
Then we define the solution operator $E(t)$ by
\begin{equation}\label{eqn:sol_op}
	E(t)=\frac{1}{2\pi \mathrm{i}}\int_{\Gamma_{\theta,\sigma }}e^{zt}(z+A)^{-1}\d z,
\end{equation}
where the contour $\Gamma_{\theta,\sigma}=\{z\in \mathbb{C}:|z|=\sigma,|\arg(z)|\le \theta\}\cup\{z\in \mathbb{C}:z=\rho e^{i\theta},\rho\ge \sigma\}$,
with fixed $\sigma\in (0,\infty)$ and $\theta\in(\pi/2,\pi)$, is oriented with an increasing imaginary part. The solution $u$ to problem \eqref{eqn:para_robin} can be represented by
\begin{equation*}
    u(t)=E(t)(u_0-\mathcal{N}g)+\mathcal{N}g,
\end{equation*}
where the Neumann lifting $w:=\mathcal{N}(q)g\in H^1(\Omega)$ solves the following elliptic problem
\begin{equation}\label{eqn:op_N}
   \left\{\begin{aligned} -\Delta w  & =0, &&\mbox{ in }\Omega,\\
    \partial_nw+qw  &= 0, &&  \mbox{ on } \Gamma_i,\\
    \partial_nw  &=g , &&\mbox{ on } \Gamma_a.
   \end{aligned}\right.
\end{equation}

Now we define an auxiliary function $\overline{u} = \mathcal{N}(q)g$, which solves the stationary problem \eqref{eqn:op_N}. It will be used in establishing the stability analysis. The following lemma bounds the difference between the solutions of   the parabolic  and  stationary problems. Throughout, let $\lambda(q)>0$ be the smallest eigenvalue of the operator $A=A(q)$.
\begin{lem}\label{lem:para-stationary}
Let  $\Bar{u}=\mathcal{N}g$, and  $u(t)$ be the solution of problem \eqref{eqn:para_robin}. Then there holds
\begin{equation*}
  \|u(t)-\Bar{u} \|_{H^1(\Omega)}\le ce^{-\frac{\lambda(q)}{2}t}\|u_0-\Bar{u} \|_{H^1(\Omega)}.
\end{equation*}
\end{lem}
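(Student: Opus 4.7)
The plan is to reduce the claim to a standard semigroup decay bound for the homogeneous parabolic problem. Define $w(t):=u(t)-\Bar{u}$. Subtracting \eqref{eqn:op_N} from \eqref{eqn:para_robin} and using $\partial_t\Bar{u}=0$, I find that $w$ satisfies the homogeneous boundary value problem
\begin{equation*}
\partial_t w-\Delta w=0 \text{ in }\Om\times(0,T],\quad \partial_n w+qw=0 \text{ on }\Gamma_i\times(0,T],\quad \partial_n w=0 \text{ on }\Gamma_a\times(0,T],
\end{equation*}
with initial datum $w(0)=u_0-\Bar{u}$. By definition of $A=A(q)$, this means $w(t)=E(t)(u_0-\Bar{u})$, with $E(t)$ given by the contour integral \eqref{eqn:sol_op}.

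Next, I would exploit that $A$ is self-adjoint and strictly positive, with smallest eigenvalue $\lambda(q)>0$, so $A^{1/2}$ commutes with $E(t)$ and each eigenmode decays like $e^{-\lambda_j t}$. Expanding $u_0-\Bar{u}=\sum_j c_j\phi_j$ in the orthonormal eigenbasis yields
\begin{equation*}
\|A^{\frac12}w(t)\|_{L^2\II}^2=\sum_{j=1}^\infty \lambda_j e^{-2\lambda_j t}|c_j|^2\le e^{-2\lambda(q) t}\sum_{j=1}^\infty \lambda_j|c_j|^2=e^{-2\lambda(q) t}\|A^{\frac12}(u_0-\Bar{u})\|_{L^2\II}^2,
\end{equation*}
and similarly $\|w(t)\|_{L^2\II}\le e^{-\lambda(q) t}\|u_0-\Bar{u}\|_{L^2\II}$. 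Both bounds are in fact stronger than the rate $e^{-\lambda(q) t/2}$ claimed in the lemma, so there is slack to absorb harmless constants.

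To finish, I would invoke the norm equivalence
\begin{equation*}
\|v\|_{H^1\II}^2\sim \|A^{\frac12}v\|_{L^2\II}^2+\|v\|_{L^2\II}^2=\|\nabla v\|_{L^2\II}^2+(qv,v)_{L^2(\Gamma_i)}+\|v\|_{L^2\II}^2,
\end{equation*}
whose easy direction follows from the trace inequality and the upper bound $\Bar{c}_q$, and whose nontrivial direction follows from a Poincar\'e--Robin inequality on $\Omega$ that exploits $|\Gamma_i|>0$ together with the uniform lower bound $\underline{c}_q$ on $q$. Combining this equivalence with the two decay estimates above then delivers $\|w(t)\|_{H^1\II}\le c e^{-\lambda(q) t/2}\|u_0-\Bar{u}\|_{H^1\II}$.

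The main obstacle is to make the norm equivalence uniform in $q$ across the admissible set $\mathcal{A}$ (or $\mathcal{B}$), so that the constant $c$ in the conclusion is genuinely independent of the particular Robin coefficient. This boils down to uniform coercivity of the bilinear form $a(v,v)=\|\nabla v\|_{L^2\II}^2+(qv,v)_{L^2(\Gamma_i)}$ on $H^1(\Omega)$, which is standard given the uniform lower bound $\underline{c}_q$ and the convex polygonal geometry; once this is secured, the spectral step is essentially mechanical.
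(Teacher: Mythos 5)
Your proof is correct and follows essentially the same route as the paper: both reduce to the representation $u(t)-\Bar{u}=E(t)(u_0-\Bar{u})$, bound each eigenmode by the decay of the smallest eigenvalue, and transfer the estimate from the $D(A^{1/2})$-norm to the $H^1(\Omega)$-norm via the uniform (in $q$) coercivity and boundedness of the Robin bilinear form. The only cosmetic difference is that you obtain the sharper rate $e^{-\lambda(q)t}$ before relaxing it, whereas the paper splits off the factor $e^{-\lambda(q)t/2}$ inside the more general smoothing estimate \eqref{eqn:E-smoothing}, which it reuses later with $r\neq p$.
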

\begin{proof}
For any $p,r\geq 0$, direct computation yields
\begin{align*}
        \|E(t) v\|_{D(A^r)  }^2\le \sup_{\lambda_j} \left(\lambda_j^{{2(r-p)}}  e^{-\lambda_j t} \right)e^{-\lambda(q) t} \sum_{j=1}^{\infty}\lambda_j^{2p}  (v,\phi_j)^2
        \le ct^{-2(r-p)} e^{-\lambda(q) t} \|   v\|_{D(A^p)  }^2,
\end{align*}
in view of the elementary inequality $\sup_{\lambda>0}\lambda^se^{-\lambda t} \leq ct^{-s}$ for $s>0$. Consequently,
\begin{align}
  \|E(t) v\|_{D(A^r)  } \leq ct^{p-r}e^{-\frac{\lambda(q)}{2}}\|v\|_{D(A^p)}.\label{eqn:E-smoothing}
\end{align}
This estimate and the representation $u(t)-\Bar{u}=E(t)(u_0-\Bar{u})$ directly imply
    \begin{equation*}
         \|u(t)-\Bar{u} \|_{H^1(\Omega)}\le  \|E(t) \|_{H^1(\Omega)\rightarrow H^1(\Omega)} \| u_0-\Bar{u} \|_{H^1(\Omega)}\le   ce^{-\frac{\lambda(q)}{2}t}\|u_0-\Bar{u} \|_{H^1(\Omega)}.
    \end{equation*}
This completes the proof of the lemma.
\end{proof}

Now we can state two new stability estimates for the parabolic inverse Robin problem.
\begin{thm}\label{thm:stab_para}
Let Assumption \ref{assum:regularity_para} hold. Let $f_i=u_i|_{\Gamma_a'\times\{T\}}$, $i=1,2$ be the exact data associated with the Robin coefficient $q_i$. Then the following stability estimates hold.
\begin{itemize}
    \item[{\rm(i)}] If $q_i\in \mathcal{A}$, $i=1,2$, then there exist $T_0$ such that for all $T>T_0$, there holds
    \begin{equation*}
        \|q_1-q_2\|_{L^\infty(\Gamma_i)}\le c\|f_1-f_2\|_{L^2(\Gamma_a')}.
    \end{equation*}
   \item[{\rm(ii)}]  If $q_i\in \mathcal{B}$, $i=1,2$, then there exist $r\in (\frac{1}{4},\frac{1}{2})$, $\theta=\frac{1-2\epsilon}{3-2\epsilon}$, and $\kappa=\kappa(c_g,\underline{c}_q,\overline{c}_q,c_0,\Omega)\in (0,1)$, such that
    \begin{equation*}
        \|q_1-q_2\|_{L^\infty(\Gamma_i)}\le c(\|f_1-f_2\|_{L^2(\Gamma_a')}+ T^{ \frac{1}{2}-r }e^{-\min(\frac{\lambda(q_1)}{2},\frac{\lambda(q_2)}{2})T} \|q_1-q_2\|_{L^\infty(\Gamma_i)})^{\kappa\theta} .
    \end{equation*}
\end{itemize}
\end{thm}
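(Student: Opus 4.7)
The plan is to reduce the parabolic stability estimate to the elliptic Lipschitz/H\"older stability results \cite[Theorem 2.4]{Sincich:2007} and \cite[Theorem 4.1]{Hu:2015} (as used in Theorem~\ref{thm:error_estimate}), by exploiting the exponential convergence to the stationary regime provided by Lemma~\ref{lem:para-stationary}. I introduce the stationary solutions $\bar u_i:=\mathcal{N}(q_i)g$ of problem~\eqref{eqn:op_N}, which coincide with solutions of~\eqref{eqn:problem_NR} for Robin coefficient $q_i$, so that the elliptic stability estimates apply directly to $(\bar u_1-\bar u_2)|_{\Gamma_a'}$.

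The central technical object is $w:=(u_1-u_2)-(\bar u_1-\bar u_2)$. A short computation using the boundary conditions on $u_i$ and $\bar u_i$ shows that $w$ satisfies
\begin{equation*}
\partial_t w-\Delta w=0\text{ in }\Omega\times(0,T],\quad \partial_n w+q_1 w=(q_2-q_1)(u_2-\bar u_2)\text{ on }\Gamma_i,\quad \partial_n w=0\text{ on }\Gamma_a,\quad w(0)=-(\bar u_1-\bar u_2).
\end{equation*}
Since $u_1(T)-u_2(T)=w(T)+(\bar u_1-\bar u_2)$, the triangle inequality on $\Gamma_a'$ gives $\|\bar u_1-\bar u_2\|_{L^2(\Gamma_a')}\le\|f_1-f_2\|_{L^2(\Gamma_a')}+\|w(T)\|_{L^2(\Gamma_a')}$. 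Via Duhamel, $w(T)=E(T;q_1)w(0)+\mathcal{D}(T)$, where $\mathcal{D}(T)$ is driven by the boundary source $(q_2-q_1)(u_2-\bar u_2)$. For the homogeneous part I combine Lemma~\ref{lem:interp-ineq} with the smoothing estimate \eqref{eqn:E-smoothing} (suitable $p,r$ with $r\in(\tfrac14,\tfrac12)$) to obtain $\|E(T;q_1)w(0)\|_{L^2(\Gamma_a')}\le cT^{1/2-r}e^{-\lambda(q_1)T/2}\|\bar u_1-\bar u_2\|_{H^1(\Omega)}$, and the elliptic forward Lipschitz bound $\|\bar u_1-\bar u_2\|_{H^1(\Omega)}\le C\|q_1-q_2\|_{L^\infty(\Gamma_i)}$ (a standard variational argument for~\eqref{eqn:op_N}) recasts this as $cT^{1/2-r}e^{-\lambda(q_1)T/2}\|q_1-q_2\|_{L^\infty(\Gamma_i)}$. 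The Duhamel term $\mathcal{D}(T)$ inherits $\|q_1-q_2\|_{L^\infty(\Gamma_i)}$ from $q_2-q_1$ and is controlled using Lemma~\ref{lem:para-stationary} to handle $\|u_2-\bar u_2\|(s)$ pointwise in $s$, producing a bound of the same form.

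For case (i) I apply the Lipschitz elliptic stability $\|q_1-q_2\|_{L^\infty(\Gamma_i)}\le c\|\bar u_1-\bar u_2\|_{L^2(\Gamma_a')}$; the $\|q_1-q_2\|_{L^\infty}$-factor on the right is absorbed into the left provided $T>T_0$, with $T_0$ chosen so that $cCT_0^{1/2-r}e^{-\underline{\lambda}T_0/2}<\tfrac12$, where $\underline{\lambda}>0$ is a uniform spectral gap for $\lambda(q_i)$ across the finite-dimensional admissible set $\mathcal{A}$ (available by compactness). For case (ii) I apply the H\"older estimate $\|q_1-q_2\|_{L^\infty(\Gamma_i)}\le c\|\bar u_1-\bar u_2\|_{H^1(\Gamma_a')}^{\kappa}$ and, mirroring the interpolation used in the proof of Theorem~\ref{thm:error_estimate}(ii), pass through $\|\bar u_1-\bar u_2\|_{L^2(\Gamma_a')}^{\kappa\theta}\|\bar u_1-\bar u_2\|_{H^{3/2-\epsilon}(\Gamma_a')}^{\kappa(1-\theta)}$ with $\theta=\frac{1-2\epsilon}{3-2\epsilon}$; Proposition~\ref{prop:sol_decomp} bounds the higher-order factor by a data-dependent constant. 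Since $\kappa\theta<1$, absorption is impossible and the $\|q_1-q_2\|_{L^\infty}$-remainder must remain inside the outer power $(\cdot)^{\kappa\theta}$, producing the stated estimate.

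The main obstacle will be the Duhamel bookkeeping for $\mathcal{D}(T)$: one has to handle an inhomogeneous Robin boundary source in the semigroup framework while simultaneously tracking the precise polynomial factor $T^{1/2-r}$ and the exponential rate $e^{-\lambda(q_i)T/2}$, so that every contribution to $\|w(T)\|_{L^2(\Gamma_a')}$ carries the prefactor $\|q_1-q_2\|_{L^\infty(\Gamma_i)}$ required for absorption in case (i). A secondary technical point is verifying the uniform spectral gap $\lambda(q)\ge\underline{\lambda}>0$ over the admissible set, which ensures that a single $T_0$ works for all admissible pairs $(q_1,q_2)$.
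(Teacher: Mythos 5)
Your proposal follows essentially the same route as the paper's proof: reduce to the elliptic stability results via the stationary solutions $\bar u_i=\mathcal{N}(q_i)g$, control $w=(u_1-u_2)-(\bar u_1-\bar u_2)$ through a Duhamel representation in which the Robin boundary source $(q_2-q_1)(u_2-\bar u_2)$ is lifted by an elliptic auxiliary problem and estimated with Lemma \ref{lem:para-stationary}, use the smoothing estimate \eqref{eqn:E-smoothing} with $r\in(\tfrac14,\tfrac12)$ to extract the factor $T^{1/2-r}e^{-\lambda T/2}\|q_1-q_2\|_{L^\infty(\Gamma_i)}$, and then absorb for large $T$ in case (i) while keeping the remainder inside the H\"older power in case (ii). The "Duhamel bookkeeping" you flag as the main obstacle is exactly what the paper resolves by introducing the lifting $\mathcal{R}\psi$ and writing $w(t)=E(t)(\bar u_1-\bar u_2)+\int_0^t E'(t-s)\mathcal{R}\psi(s)\,\d s$, so your plan is sound.
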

\begin{proof}
    Let $\overline{u}_i=\mathcal{N}(q_i)g$, $i=1,2$.
    The elliptic stability results in \cite[Theorem 2.4]{Sincich:2007} and \cite[Theorem 4.1]{Hu:2015} and interpolation inequality yield for $\theta=\frac{1-2\epsilon}{3-2\epsilon}$,
    \begin{equation*}
        \|q_1-q_2\|_{L^\infty(\Gamma_i)}\le \left\{
        \begin{aligned}
          c\|\Bar{u}_1-\Bar{u}_2\|_{L^2(\Gamma_a')}, &\quad q_i\in \mathcal{A},\\
          c\|\Bar{u}_1-\Bar{u}_2\|_{L^2(\Gamma_a')}^{\kappa\theta}, &\quad q_i\in \mathcal{B}.
        \end{aligned}\right.
\end{equation*}
By the triangle inequality, 
$$\|\Bar{u}_1-\Bar{u}_2\|_{L^2(\Gamma_a')}\le \|f_1-f_2\|_{L^2(\Gamma_a')}+  \|f_1-f_2-\Bar{u}_1+\Bar{u}_2\|_{L^2(\Gamma_a')}.$$ 
Let $w=u_1-u_2-\Bar{u}_1+\Bar{u}_2$. Then with $\psi=(q_2-q_1)(u_2-\Bar{u}_2) $, $w$ satisfies
    \begin{equation*}
    \left\{
        \begin{aligned}
            \partial_t w -\Delta w & =0, && \mbox{in }\Om\times (0,T], \\
            \partial_n w+q_1w & =\psi, && \mbox{on } \Gamma_i\times (0,T],\\
            \partial_nw & =0, && \mbox{on } \Gamma_a\times (0,T],\\
            w(0) & =\Bar{u}_1-\Bar{u}_2, && \mbox{in } \Omega.
        \end{aligned}
    \right.
    \end{equation*}
By integration by parts, the following solution representation holds
\begin{align}
w(t)=&\mathcal{R}\psi(t)+E(t)(\Bar{u}_1-\Bar{u}_2-\mathcal{R}\psi(0) )-\int_0^t E(t-s)\partial_t \mathcal{R}\psi(s) \d s\nonumber\\
=& E(t)(\Bar{u}_1-\Bar{u}_2  )+\int_0^t E'(t-s)  \mathcal{R}\psi(s) \d s,\label{eqn:sol-rep-w}
\end{align}
    where the function $\mathcal{R}\psi$ solves the elliptic problem
    \begin{equation}\label{eqn:Robin_sol}
    \left\{
        \begin{aligned}
              -\Delta \mathcal{R}\psi & =0, && \mbox{in }\Om , \\
            \partial_n\mathcal{R}\psi+q_1\mathcal{R}\psi & = \psi, && \mbox{on } \Gamma_i ,\\
            \partial_n\mathcal{R}\psi & =0, && \mbox{on } \Gamma_a.
        \end{aligned}
    \right.
    \end{equation}
Note that by Lemma \ref{lem:para-stationary}, we have the following elliptic regularity estimate
\begin{align}
    \|\mathcal{R}\psi(t)\|_{H^1(\Omega)} &\leq c\|\psi(t)\|_{L^2(\Gamma_i)} \leq c\|q_1-q_2\|_{L^\infty(\Gamma_i)}\|u_2(t)-\overline{u}_2\|_{L^2(\Gamma_i)}\nonumber\\
     & \leq ce^{-\frac{\lambda(q_2)}{2}t}\|q_1-q_2\|_{L^\infty(\Gamma_i)}.\label{eqn:est-R-psi}
\end{align}
Then by the representation \eqref{eqn:sol-rep-w},
    \begin{equation*}
        \|w(t)\|_{D(A^{r})}\le \|E(t)(\Bar{u}_1-\Bar{u}_2  )\|_{D(A^{r})}+\int_0^t \|E'(t-s)\|_{D(A^{\frac{1}{2} })\rightarrow D(A^{r})}  \|\mathcal{R}\psi(s)\|_{D(A^{\frac{1}{2} })} \d s:=\mathrm{I}+\mathrm{II},
    \end{equation*}
    with the exponent $r\in (\frac{1}{4},\frac{1}{2}) $.
    By the smoothing property \eqref{eqn:E-smoothing} and the elliptic regularity 
    \begin{equation*} 
    \|\overline{u}_1-\overline{u}_2\|_{H^1(\Omega)}\leq \|(q_2-q_1)\overline{u}_2\|_{L^2(\Gamma_i)}\leq c\|q_1-q_2\|_{L^\infty(\Gamma_i)},
    \end{equation*}
    we have
    \begin{align*}
        {\rm I} \le&  \|E(t)\|_{D(A^{\frac{1}{2}})\rightarrow D(A^{r})}\|\Bar{u}_1-\Bar{u}_2 \|_{D(A^{\frac{1}{2}})}\le ct^{\frac{1}{2}-r}e^{-\frac{\lambda(q_1)}{2}t}\|q_1-q_2\|_{L^\infty(\Gamma_i)}.
    \end{align*}
Meanwhile, the identity $-AE(t)=E'(t)$ and the estimate \eqref{eqn:est-R-psi} yield
\begin{align*}
{\rm II}\le& c \int_0^t (t-s)^{ -\frac{1}{2}-r} e^{-\frac{\lambda(q_1)}{2}(t-s)}  \|\mathcal{R}\psi(s)\|_{H^1(\Omega)} \d s\\
%\le & c \int_0^t  (t-s)^{ -\frac{1}{2}-r} e^{-\frac{\lambda(q_1)}{2}(t-s)}  \|q_1-q_2\|_{L^\infty(\Gamma_i)}\| u_2(s)-\Bar{u}_2\|_{L^2(\Gamma_i)} \d s\\
\le & c \int_0^t  (t-s)^{ -\frac{1}{2}-r} e^{-\frac{\lambda(q_1)}{2}(t-s)} e^{-\frac{\lambda(q_2)}{2}s} \d s \|q_1-q_2\|_{L^\infty(\Gamma_i)}
    \end{align*}
Direct computation gives the following elementary inequality
    \begin{equation*}
        \int_0^t  (t-s)^{ -\frac{1}{2}-r} e^{-\frac{\lambda(q_1)}{2}(t-s)} e^{-\frac{\lambda(q_2)}{2}s} \d s\le ce^{-\frac{\lambda(q_1)}{2} t} t^{\frac{1}{2}-r }\max(1,e^{(\frac{\lambda(q_1)}{2}-\frac{\lambda(q_2)}{2})t})\le ct^{ \frac{1}{2}-r }e^{-\min(\frac{\lambda(q_1)}{2},\frac{\lambda(q_2)}{2}) t}.
    \end{equation*}
Consequently,
    \begin{equation*}
        {\rm II} \le ct^{ \frac{1}{2}-r }e^{-\min(\frac{\lambda(q_1)}{2},\frac{\lambda(q_2)}{2}) t}\|q_1-q_2\|_{L^\infty(\Gamma_i)}.
    \end{equation*}
The estimates on $\mathrm{I}$ and $ \mathrm{II}$ and the trace theorem yield for $r\in (\frac{1}{4},\frac{1}{2})$,
\begin{equation*}
    \|w(t)\|_{L^2(\Gamma_a')}\le \|w(t)\|_{D(A^{r}) }\le ct^{ \frac{1}{2}-r }e^{-\min(\frac{\lambda(q_1)}{2},\frac{\lambda(q_2)}{2})t}\|q_1-q_2\|_{L^\infty(\Gamma_i)}.
\end{equation*}
For any $q_1,q_2\in \mathcal{A}$, by taking $t=T$ sufficiently large such that $cT^{ \frac{1}{2}-r }e^{-\min(\frac{\lambda(q_1)}{2},\frac{\lambda(q_2)}{2})T}<\frac{1}{2} $, we get
    \begin{equation*}
         \|q_1-q_2\|_{L^\infty(\Gamma_i)}\le c\left(\|f_1-f_2\|_{L^2(\Gamma_a')}+  \|w(T)\|_{L^2(\Gamma_a')}\right)\le c\|f_1-f_2\|_{L^2(\Gamma_a')}+\tfrac{1}{2}\|q_1-q_2\|_{L^\infty(\Gamma_i)}.
    \end{equation*}
This shows the assertion in case (i). For $q_1,q_2\in \mathcal{B}$, taking $t=T$ in the preceding estimate gives the desired assertion in case (ii).
\end{proof}

\subsection{Error analysis}\label{subsec:error_para}
Now we develop a discrete reconstruction algorithm for the parabolic inverse Robin problem and provide an error analysis of the scheme. We first present a fully discrete scheme for problem  \eqref{eqn:para_robin}. For the time discretization, we divide the interval $[0,T]$ into $M$ equally spaced subintervals with a time step size $\tau=T/M$ and take the time grids $\{t_m=m\tau\}_{m=0}^M$. We employ the backward Euler scheme to approximate the time derivative $\partial_tu$:
\begin{equation*}
    \bar{\partial}_t u(t_m)\approx \partial_\tau u^m=\tau^{-1}(u(t_m)-u(t_{m-1})).
\end{equation*}
For the spatial discretization, we employ the Galerkin FEM wth the finite element spaces $V_h$, $V_h^0$ and $V_h^{0,a}$ defined in Section \ref{subsec:FEM_approx}. 
Now we can describe the FEM approximation of problem \eqref{eqn:para_cts_functional}
\begin{equation}\label{eqn:para_dis_functional}
	\min_{q\in \mathcal{A}} J_{\al,h}(q)= \|\nabla u_{N,h}^M-\nabla u_{D,h}^\delta \|_{L^2(\Omega)}^2+\|  q^{\frac{1}{2}}(u_{N,h}^M-  u_{D,h}^\delta) \|_{L^2(\Gamma_i)}^2+\alpha\|  u_{N,h}^M-  u_{D,h}^\delta \|_{L^2(\Omega)}^2,
\end{equation}
where $ u_{N,h}^m=u_{N,h}^m(q) \in V_h$ solves
\begin{equation}\label{eqn:para_dis_constraint_NR}
	(\bar{\partial}_\tau  u_{N,h}^m , v_h)+(\nabla u_{N,h}^m,\nabla v_h)+(q u_{N,h}^m ,v_h)_{L^2(\Gamma_i)}=(g,v_h)_{L^2(\Gamma_a)},\quad \forall v_h\in V_h,\quad \mbox{with } u_{N,h}^0=R_h u_0,
\end{equation}
and $  u_{D,h}^\delta=u_{D,h}^\delta(q)\in V_h$  satisfies $u_{D,h}^\delta=z_h^\delta\mbox{ on }\Gamma_a'$ and
\begin{equation}\label{eqn:para_dis_constraint_DR}
	(\nabla u_{D,h}^\delta,\nabla v_h)+(q u_{D,h}^\delta,v_h)_{L^2(\Gamma_i)}
    = -(\bar{\partial}_\tau u_{N,h}^M, v_h)+ (g, v_h)_{L^2(\Gamma_a\setminus\Gamma_a')},\quad \forall v_h\in V_h^{0,a}.
\end{equation}
One can check that the discrete optimization problem \eqref{eqn:para_dis_functional}-\eqref{eqn:para_dis_constraint_DR} admits at least one global minimizer. Next we bound the error $\|q^{\dag}-q^*\|_{L^\infty(\Gamma_i)} $ between the exact Robin coefficient $q^{\dag}$ and the minimizer $q^*$. 

The following \textit{a priori} estimate on the discrete Neumann problem is crucial. The proof employs an operator theoretic argument \cite{Fujita:1991}, and is deferred to the appendix.
\begin{lem}\label{lem:para_error_L2_Neumann}
Let Assumption \ref{assum:regularity_para} hold, and $u(T;q^{\dag})$ and $u_{N,h}^M(q^\dag)$ be the solutions of problems \eqref{eqn:para_robin} and \eqref{eqn:para_dis_constraint_NR}, respectively.
Then the following error estimates hold
    \begin{align*}
       \|  u_{N,h}^M( q^{\dag})- u(T;q^{\dag}) \|_{L^2(\Omega)}&\le c(h^{2(1-\epsilon)}+\tau),\qquad \| \nabla u_{N,h}^M( q^{\dag})- \nabla u(T;q^{\dag}) \|_{L^2(\Omega)} \le c(h^{1-\epsilon}+T^{-\frac{1}{2}}\tau),\\
        \|  \bar{\partial}_\tau u_{N,h}^M( q^{\dag})- \partial_t u(T;q^{\dag}) \|_{L^2(\Omega)}& \le cT^{-1} (h^{2(1-\epsilon)}+\tau) .
    \end{align*}
\end{lem}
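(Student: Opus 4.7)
The plan is to decompose the error $u_{N,h}^M(q^\dag) - u(T;q^\dag)$ via an intermediate spatial semi-discretization. Introduce $u_h(t) \in V_h$ solving
\[
(\partial_t u_h, v_h) + (\nabla u_h, \nabla v_h) + (q^\dag u_h, v_h)_{L^2(\Gamma_i)} = (g, v_h)_{L^2(\Gamma_a)}, \quad \forall v_h \in V_h,
\]
with $u_h(0) = R_h u_0$, and split
\[
u_{N,h}^M - u(T) = \bigl(u_h(T) - u(T)\bigr) + \bigl(u_{N,h}^M - u_h(T)\bigr).
\]

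For the spatial error $u_h(T) - u(T)$, I would use the Ritz splitting $u_h - u = \theta + \rho$ with $\theta := u_h - R_h u$ and $\rho := R_h u - u$. The contribution of $\rho$ is controlled by \eqref{eqn:error_Rh} combined with the time-uniform regularity $\|u(t)\|_{H^{2-\epsilon}(\Omega)} \le c$ for $t \in [0,T]$, which follows from Proposition \ref{prop:sol_decomp} applied to $u(\cdot,t)$ pointwise in $t$ together with the compatibility condition on $u_0$. The remainder $\theta \in V_h$ satisfies the discrete parabolic equation with source $-(\partial_t \rho, v_h)$ and $\theta(0) = 0$. A Duhamel representation and the discrete analogue of the smoothing estimate \eqref{eqn:E-smoothing} then yield the stated bounds on $\|\theta(T)\|_{L^2(\Omega)}$, $\|\nabla\theta(T)\|_{L^2(\Omega)}$, and the discrete time derivative.

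For the temporal error $u_{N,h}^M - u_h(T)$, I would follow the operator-theoretic approach of \cite{Fujita:1991}. Let $A_h = A_h(q^\dag)$ denote the discrete operator on $V_h$ associated with $(\nabla \cdot, \nabla \cdot) + (q^\dag\, \cdot, \cdot)_{L^2(\Gamma_i)}$ and let $w_h \in V_h$ be the discrete Neumann lift of $g$. Writing $u_h(T) = e^{-T A_h}(u_0 - w_h) + w_h$ and $u_{N,h}^M = (I+\tau A_h)^{-M}(u_0 - w_h) + w_h$, the error reduces to $\bigl((I+\tau A_h)^{-M} - e^{-T A_h}\bigr)(u_0 - w_h)$. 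Representing both operators via contour integrals on a sectorial path as in \eqref{eqn:sol_op} and using classical estimates on $|(1+\tau z)^{-M} - e^{-Tz}|$ along the contour yields $O(\tau)$ in $L^2(\Omega)$, $O(T^{-1/2}\tau)$ in $H^1(\Omega)$ (by absorbing $A_h^{1/2}$ into the integrand at the cost of one negative power of $t$), and $O(T^{-1}\tau)$ for the discrete time derivative (by one further application of $A_h$).

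The main obstacle will be verifying the discrete smoothing estimates and the resolvent bound for $A_h$ on the sectorial contour uniformly in $h$, so that the continuous operator-theoretic manipulations transfer to the fully discrete scheme. The compatibility condition in Assumption \ref{assum:regularity_para}(i) is essential: without $\partial_t u(0) \in L^2(\Omega)$, a $t^{-1}$-type boundary layer would degrade the first-order convergence in $\tau$. Likewise, the graded mesh \eqref{eqn:mesh} is crucial, as it restores the nearly optimal Ritz order $h^{2(1-\epsilon)}$ despite the corner singularities inherited from Proposition \ref{prop:sol_decomp}.
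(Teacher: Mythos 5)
Your plan is sound and, for the temporal error, essentially coincides with the paper's proof: the paper also inserts the spatially semidiscrete solution as an intermediary and treats $u_{N,h}^M-u_{N,h}(T)$ by the operator calculus of \cite{Fujita:1991}, writing both the semigroup and the backward Euler propagator as contour integrals over $\Gamma_{\theta,\sigma}$ and $\Gamma_{\theta,\sigma}^\tau$, using the discrete resolvent bounds \eqref{eqn:dis_resolvent} and the kernel estimates $|1-e^{-z\tau}|\le c|z|\tau$, $|\delta_\tau(e^{-z\tau})-z|\le c\tau|z|^2$ to extract the rates $O(T^{-s/2}\tau)$ for $s=0,1$ and $O(T^{-1}\tau)$ for the discrete time derivative. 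Where you diverge is the spatial error: you propose the classical Thom\'ee energy splitting $u_h-u=\theta+\rho$ with $\rho=R_hu-u$ and a Duhamel bound on $\theta$, whereas the paper stays inside the operator framework and bounds the resolvent difference $K(z)=(z+A)^{-1}-(z+A_h)^{-1}P_h$ in $\mathcal{L}(L^2(\Omega),H^s(\Omega))$ by $ch^{(2-s)(1-\epsilon)}$ (citing \cite[Theorem 7.1]{Fujita:1991}), then integrates over the contour using $A_hR_h=P_hA$. The paper's route has the advantage that it only requires control of $Au_0$ in $L^2(\Omega)$-type norms and handles the three estimates (including $\bar{\partial}_\tau u_{N,h}^M-\partial_t u(T)$) by the single mechanism of inserting extra powers of $z$ into the integrand; your route requires the time-uniform bound on $\|\partial_t u(s)\|_{H^{2-\epsilon}(\Omega)}$, which has an integrable singularity $s^{-1+\epsilon/2}$ at $s=0$ under the compatibility condition, so your integral $\int_0^T\|\rho_t(s)\|_{L^2(\Omega)}\,\d s$ does converge, but you should say this explicitly, and the third estimate (on the time derivatives) is noticeably more awkward to extract from the energy splitting than from the contour representation, where it is a one-line modification. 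Neither difference is a gap; both arguments deliver the stated rates.
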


The following \textit{a priori} estimates on the data fitting term hold.
\begin{lem}\label{lem:para_error_L2_Dirichlet}
Let Assumption \ref{assum:regularity_para} hold, and $u(T;q^{\dag})$ and $u_{D,h}^\delta(  q^{\dag})$ be the solutions of problems \eqref{eqn:para_robin}  and \eqref{eqn:para_dis_constraint_DR}, respectively. Then  the following error estimates hold
    \begin{align*}
       \|  u_{D,h}^\delta( q^{\dag})- u(T;q^{\dag}) \|_{L^2(\Omega)}\le& c(h^{2(1-\epsilon)}+\delta^{\frac{4}{3}(1-\epsilon)}+ T^{-1} (h^{2(1-\epsilon)}+\tau)),\\
       \| \nabla u_{D,h}^\delta( q^{\dag})- \nabla u(T;q^{\dag}) \|_{L^2(\Omega)} \le&  c(h^{1-\epsilon}+\delta^{\frac{2}{3}(1-\epsilon)}+ T^{-1} (h^{2(1-\epsilon)}+\tau)).
    \end{align*}
\end{lem}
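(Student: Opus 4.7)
\textbf{Proof proposal for Lemma \ref{lem:para_error_L2_Dirichlet}.}

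The plan is to mimic the proof strategy of Lemma \ref{lem:error_L2}, exploiting the fact that the exact parabolic solution at the terminal time, $u(T;q^\dag)$, itself satisfies problem \eqref{eqn:para_DR} with source $-\partial_t u(T;q^\dag)$ and boundary trace $f$ on $\Gamma_a'$. Thus the difference between $u_{D,h}^\delta(q^\dag)$ and $u(T;q^\dag)$ is driven by three perturbations: (a) the noisy discrete Dirichlet data $z_h^\delta$ in place of $f$ on $\Gamma_a'$, (b) the Galerkin discretization on the graded mesh, and (c) the mismatch between the discrete source $-\bar{\partial}_\tau u_{N,h}^M(q^\dag)$ and the continuous source $-\partial_t u(T;q^\dag)$, which is already controlled by Lemma \ref{lem:para_error_L2_Neumann} in $L^2(\Omega)$ by $cT^{-1}(h^{2(1-\epsilon)}+\tau)$.

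First I would introduce the intermediate discrete function $u_{D,h}^0\in V_h$, with $u_{D,h}^0=\mathcal{I}_h^\partial f$ on $\Gamma_a'$, solving
\begin{equation*}
(\nabla u_{D,h}^0,\nabla v_h)+(q^\dag u_{D,h}^0,v_h)_{L^2(\Gamma_i)}=-(\bar{\partial}_\tau u_{N,h}^M(q^\dag),v_h)+(g,v_h)_{L^2(\Gamma_a\setminus\Gamma_a')},\quad \forall v_h\in V_h^{0,a},
\end{equation*}
and split the error as $\|u_{D,h}^\delta-u(T;q^\dag)\|\le \|u_{D,h}^\delta-u_{D,h}^0\|+\|u_{D,h}^0-u(T;q^\dag)\|$. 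Since the source term $-\bar{\partial}_\tau u_{N,h}^M(q^\dag)$ is common to both $u_{D,h}^\delta$ and $u_{D,h}^0$, it cancels in their difference, so $u_{D,h}^\delta-u_{D,h}^0$ is governed only by the noisy Dirichlet perturbation on $\Gamma_a'$. Exactly the harmonic-lifting argument of Lemma \ref{lem:error_L2} (construct $G$ with boundary data $z_h^\delta-\mathcal{I}_h^\partial f$, invoke \eqref{eqn:noise} and the interpolation error) then yields the bounds $c(\delta^{\frac{2}{3}(1-\epsilon)}+h^{\frac{3}{2}-\epsilon})$ in $H^1(\Omega)$ and $c(\delta^{\frac{4}{3}(1-\epsilon)}+h^{2(1-\epsilon)})$ in $L^2(\Omega)$.

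Next I would treat the discretization term $u_{D,h}^0-u(T;q^\dag)$. Subtracting the weak form of $u(T;q^\dag)$ from that of $u_{D,h}^0$ gives a quasi-Galerkin identity
\begin{equation*}
(\nabla(u_{D,h}^0-u(T;q^\dag)),\nabla v_h)+(q^\dag(u_{D,h}^0-u(T;q^\dag)),v_h)_{L^2(\Gamma_i)}=(\partial_t u(T;q^\dag)-\bar{\partial}_\tau u_{N,h}^M(q^\dag),v_h)
\end{equation*}
for every $v_h\in V_h^{0,a}$. Choosing $v_h=u_{D,h}^0-\mathcal{I}_h u(T;q^\dag)$, applying coercivity, the interpolation bound \eqref{eqn:error_Ih} together with $u(T;q^\dag)\in H^{2-\epsilon}(\Omega)$ from Proposition \ref{prop:sol_decomp}, and controlling the source mismatch by Lemma \ref{lem:para_error_L2_Neumann}, yields $\|u_{D,h}^0-u(T;q^\dag)\|_{H^1(\Omega)}\le c(h^{1-\epsilon}+T^{-1}(h^{2(1-\epsilon)}+\tau))$. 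For the $L^2(\Omega)$ bound I would run the same duality argument used in Lemma \ref{lem:error_L2}: define $\phi$ by $-\Delta\phi=u_{D,h}^0-u(T;q^\dag)$ with mixed boundary conditions (Robin on $\Gamma_i$, Neumann on $\Gamma_a\setminus\Gamma_a'$, homogeneous Dirichlet on $\Gamma_a'$), use Proposition \ref{prop:sol_decomp} and the graded-mesh optimal interpolation estimate $\|\phi-\mathcal{I}_h\phi\|_{H^1(\Omega)}\le ch^{1-\epsilon}\|u_{D,h}^0-u(T;q^\dag)\|_{L^2(\Omega)}$, and again pair the source mismatch against $\phi\in L^2(\Omega)$. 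Combining all pieces with the triangle inequality delivers the stated bounds.

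The delicate step I expect to be the main obstacle is the bookkeeping in the $L^2(\Omega)$ duality: the source mismatch $\partial_t u(T;q^\dag)-\bar{\partial}_\tau u_{N,h}^M(q^\dag)$ appears paired with $\phi$ rather than $\phi-\mathcal{I}_h\phi$, so its contribution is exactly $cT^{-1}(h^{2(1-\epsilon)}+\tau)\|\phi\|_{L^2(\Omega)}$, which, once absorbed, matches the stated order (and notably is not improved by a further factor of $h^{1-\epsilon}$). This explains why the same $T^{-1}(h^{2(1-\epsilon)}+\tau)$ term appears in both the $L^2(\Omega)$ and $H^1(\Omega)$ estimates, and I would verify that no smallness assumption on $T$ is hidden in the constants by tracking $\lambda(q^\dag)$-dependence through Lemma \ref{lem:para-stationary}.
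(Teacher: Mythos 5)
Your proposal is correct and follows essentially the same strategy as the paper: reduce to the elliptic analysis of Lemma \ref{lem:error_L2} for the noise/discretization contributions and invoke the bound $\|\partial_t u(T;q^\dag)-\bar{\partial}_\tau u_{N,h}^M(q^\dag)\|_{L^2(\Omega)}\le cT^{-1}(h^{2(1-\epsilon)}+\tau)$ from Lemma \ref{lem:para_error_L2_Neumann} for the source mismatch. The only difference is the choice of pivot: the paper inserts the \emph{continuous} auxiliary solution $\widetilde{u}_D$ of the elliptic problem with the discrete source $\bar{\partial}_\tau u_{N,h}^M$ (so that $u_{D,h}^\delta-\widetilde{u}_D$ enjoys exact Galerkin orthogonality and the mismatch is absorbed by a single continuous elliptic stability estimate), whereas you insert the \emph{discrete} auxiliary $u_{D,h}^0$ and carry the mismatch as a residual through the C\'ea and duality arguments — both yield the stated bounds, and your observation that the mismatch pairs with $\phi$ (not $\phi-\mathcal{I}_h\phi$) and hence gains no extra factor of $h$ is exactly why the $T^{-1}(h^{2(1-\epsilon)}+\tau)$ term appears unimproved in both norms.
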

\begin{proof}
Let $\widetilde{u}_D\equiv \widetilde{u}_D(q^\dag)$ be the solution of the elliptic problem
    \begin{equation*}
    \left\{
        \begin{aligned}
            -\Delta \widetilde{u}_D & =\bar{\partial}_\tau u_{N,h}^M, && \mbox{in }\Om, \\
            \partial_n \widetilde{u}_D+q^\dag \widetilde{u}_D & =0, && \mbox{on } \Gamma_i,\\
            \partial_n \widetilde{u}_D & =g, && \mbox{on } \Gamma_a\setminus\Gamma_a',\\
            \widetilde{u}_D & =f, && \mbox{on } \Gamma_a'.
        \end{aligned}
    \right.
    \end{equation*}
By the triangle inequality, we have
\begin{equation*}
        \|  u_{D,h}^\delta( q^{\dag})- u(T;q^{\dag}) \|_{H^1(\Omega)}\le \|  u_{D,h}^\delta( q^{\dag})- \widetilde{u}_D \|_{H^1(\Omega)}+\|  \widetilde{u}_D- u(T;q^{\dag}) \|_{H^1(\Omega)}.
    \end{equation*}
The elliptic regularity, the definition \eqref{eqn:para_noise} and Lemma \ref{lem:para_error_L2_Neumann} imply
    \begin{align*}
        \|  \widetilde{u}_D( q^{\dag})- u(T;q^{\dag}) \|_{H^1(\Omega)}\le c\| \partial_t u(T;q^\dag)-\bar{\partial}_\tau u_{N,h}^M(q^\dag) \|_{L^2(\Omega)}
        \le  cT^{-1} (h^{2(1-\epsilon)}+\tau).
    \end{align*}
    The term $\|  u_{D,h}^\delta( q^{\dag})- \widetilde{u}_D(q^\dag) \|_{H^1(\Omega)} $ can be bounded similarly as Lemma \ref{lem:apriori}, with  an additional source $\bar{\partial}_\tau u_{N,h}^M\in L^2(\Omega) $. The regularity decomposition and the construction of the locally refined mesh $\mathcal{T}_h$ imply 
    \begin{equation*}
        \|  u_{D,h}^\delta( q^{\dag})- \widetilde{u}_D(q^\dag) \|_{H^1(\Omega)}\le c(h^{1-\epsilon}+\delta^{\frac{2}{3}(1-\epsilon)})\quad \text{and}\quad  \|  u_{D,h}^\delta( q^{\dag})- \widetilde{u}_D(q^\dag) \|_{L^2(\Omega)}\le c(h^{2(1-\epsilon)}+\delta^{\frac{4}{3}(1-\epsilon)}).
    \end{equation*}
Combining the preceding estimates completes the proof of the lemma.
\end{proof}

The following \textit{a priori} estimate on the objective functional $J_{\alpha,h}(q^*)$ holds.
\begin{lem}\label{lem:para_apriori}
Let Assumption \ref{assum:regularity_para} hold, and $q^*$ be a minimizer of problem \eqref{eqn:para_dis_functional}-\eqref{eqn:para_dis_constraint_DR}. Then  there holds
    \begin{equation*}
       J_{\alpha,h}(q^*)\le c(\tau^2+\eta+\alpha(\tau^2+\eta^2)),\quad \mbox{with }\eta = h^{2(1-\epsilon)}+\delta^{\frac{4}{3}(1-\epsilon)}.
    \end{equation*}
\end{lem}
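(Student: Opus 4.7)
The plan mirrors the elliptic \emph{a priori} bound in Lemma \ref{lem:apriori}. Since $q^*$ minimizes $J_{\alpha,h}$ over the admissible set, we have $J_{\alpha,h}(q^*)\le J_{\alpha,h}(q^\dag)$, so it suffices to bound $J_{\alpha,h}(q^\dag)$. The natural common reference for the two discrete objects $u_{N,h}^M(q^\dag)$ and $u_{D,h}^\delta(q^\dag)$ is $u^\dag:=u(T;q^\dag)$, the terminal value of the continuous parabolic Neumann problem \eqref{eqn:para_robin} with the true coefficient; because $u^\dag$ also solves the continuous Dirichlet problem \eqref{eqn:para_DR} exactly (with $\partial_t u^\dag(T)$ as source), the triangle inequality lets me compare both discrete approximations against the same smooth function.

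First I would estimate the gradient and $L^2(\Omega)$ parts of $J_{\alpha,h}(q^\dag)$ by splitting
\[
u_{N,h}^M(q^\dag)-u_{D,h}^\delta(q^\dag)=\bigl(u_{N,h}^M(q^\dag)-u^\dag\bigr)+\bigl(u^\dag-u_{D,h}^\delta(q^\dag)\bigr)
\]
and applying Lemmas \ref{lem:para_error_L2_Neumann} and \ref{lem:para_error_L2_Dirichlet} to the two pieces. After squaring, the $T^{-1/2}$ and $T^{-1}$ prefactors appearing in those lemmas are absorbed into the generic constant (since $T$ is fixed), which gives
\[
\|\nabla u_{N,h}^M(q^\dag)-\nabla u_{D,h}^\delta(q^\dag)\|_{L^2(\Omega)}^2\le c(\eta+\tau^2),
\qquad
\|u_{N,h}^M(q^\dag)-u_{D,h}^\delta(q^\dag)\|_{L^2(\Omega)}^2\le c(\eta^2+\tau^2),
\]
where the $\tau$ contributions come from the backward Euler discretization on the Neumann side and, through the source $\bar\partial_\tau u_{N,h}^M$, on the Dirichlet side as well.

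For the boundary term $\|q^{1/2}(u_{N,h}^M(q^\dag)-u_{D,h}^\delta(q^\dag))\|_{L^2(\Gamma_i)}^2$, I would use the box constraint $q\le \Bar{c}_q$ together with the trace interpolation inequality in Lemma \ref{lem:interp-ineq} to bound this by the product of the $L^2(\Omega)$ and $H^1(\Omega)$ norms of $u_{N,h}^M(q^\dag)-u_{D,h}^\delta(q^\dag)$, exactly as in Lemma \ref{lem:apriori}. Combining with the two bounds above yields a boundary contribution of order $(\eta+\tau)(\eta^{1/2}+\tau)$, and Young's inequality absorbs the cross terms $\eta^{3/2}$, $\eta\tau$ and $\eta^{1/2}\tau$ into $c(\eta+\tau^2)$. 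Summing the three pieces, with the weight $\alpha$ on the $L^2(\Omega)$ term, produces the claimed estimate $J_{\alpha,h}(q^*)\le c(\tau^2+\eta+\alpha(\tau^2+\eta^2))$.

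The only mildly delicate point is bookkeeping the $\tau$ contributions: the backward Euler error on the Neumann side enters the Dirichlet surrogate through its source $\bar\partial_\tau u_{N,h}^M$, so the third estimate in Lemma \ref{lem:para_error_L2_Neumann} (on the discrete time derivative) is the mechanism that ultimately feeds a $\tau^2$ into the $\alpha$-weighted part of the bound. Beyond this tracking, no new idea is required: the argument is a direct parabolic analogue of Lemma \ref{lem:apriori}.
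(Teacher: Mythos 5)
Your proposal is correct and follows essentially the same route as the paper's proof: reduce to bounding $J_{\alpha,h}(q^\dag)$ via minimality, split $u_{N,h}^M(q^\dag)-u_{D,h}^\delta(q^\dag)$ through the common reference $u(T;q^\dag)$ using Lemmas \ref{lem:para_error_L2_Neumann} and \ref{lem:para_error_L2_Dirichlet}, and handle the $\Gamma_i$ term with the box constraint and the trace interpolation inequality of Lemma \ref{lem:interp-ineq}. The only cosmetic difference is that the paper phrases the absorption of the $T^{-1/2}$ and $T^{-1}$ prefactors as ``taking $T$ large'' while you fold them into the generic constant, which amounts to the same bound.
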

\begin{proof}
Since $q^*$ minimizes the functional $J_{\alpha,h}$, we have $J_{\alpha,h}(q^*)\le J_{\alpha,h}( q^{\dag})$. Let $u^\dag=u(q^\dag)$ be the solution of problem \eqref{eqn:para_robin} with $q=q^\dag$. By Lemmas \ref{lem:para_error_L2_Neumann} and \ref{lem:para_error_L2_Dirichlet} and then taking $T$ large, we obtain
    \begin{align*}
        &\|\nabla (u_{N,h}^M(q^\dag)- u_{D,h}^\delta(q^\dag)) \|_{L^2(\Omega)}
        \le \|\nabla( u_{N,h}^M(q^\dag)- u^\dag(T)) \|_{L^2(\Omega)}+ \|\nabla(u^\dag(T)- u_{D,h}^\delta(q^\dag)) \|_{L^2(\Omega)}\\
        \le& c(h^{1-\epsilon}+T^{-\frac{1}{2}}\tau)+c(h^{1-\epsilon}+\delta^{\frac{2}{3}(1-\epsilon)}+ T^{-1} (h^{2(1-\epsilon)}+\tau)) 
        \le c(\eta^\frac12+\tau) ,\\
        &\| u_{N,h}^M(q^\dag)- u_{D,h}^\delta(q^\dag) \|_{L^2(\Omega)}
        \le  \| u_{N,h}^M(q^\dag)- u^\dag(T) \|_{L^2(\Omega)}+ \| u^\dag(T)- u_{D,h}^\delta(q^\dag) \|_{L^2(\Omega)}\\
        \le& c(h^{2(1-\epsilon)}+\tau)+c(h^{2(1-\epsilon)}+\delta^{\frac{4}{3}(1-\epsilon)}+ T^{-1} (h^{2(1-\epsilon)}+\tau))
        \le  c(\eta+\tau).
    \end{align*}
    Furthermore, the box constraint and Lemma \ref{lem:interp-ineq} imply
    \begin{align*}
        &\|  q^{\frac{1}{2}}(u_{N,h}^M(q^\dag)-  u_{D,h}^\delta(q^\dag)) \|_{L^2(\Gamma_i)}
        \le c\|   u_{N,h}^M(q^\dag)-  u_{D,h}^\delta(q^\dag) \|_{L^2(\Gamma_i)}\\
        \le & c\|   u_{N,h}^M(q^\dag)-  u_{D,h}^\delta(q^\dag) \|_{L^2(\Omega)}^{\frac{1}{2}}\|   u_{N,h}^M(q^\dag)-  u_{D,h}^\delta(q^\dag) \|_{H^1(\Omega)}^{\frac{1}{2}}\\
        \le & c( h^{\frac{3}{2}(1-\epsilon)}+\delta^{1-\epsilon}+\tau+h^{\frac{1}{2}(1-\epsilon)}\tau^{\frac{1}{2}}+\delta^{\frac{1}{3}(1-\epsilon)}\tau^{\frac{1}{2}} ) \leq c(\eta^\frac34 + \tau).
    \end{align*}
These estimates together show the desired assertion.
\end{proof}

\begin{thm}\label{thm:para_error_estimate}
Let Assumption \ref{assum:regularity_para} hold, and $q^*$ be a minimizer of problem \eqref{eqn:para_dis_functional}-\eqref{eqn:para_dis_constraint_DR}. Let $\eta = h^{2(1-\epsilon)}+\delta^{\frac43(1-\epsilon)}$. %$\eta:=\alpha^{-\frac{1}{2}}(\tau+h^{1-\epsilon}+\delta^{\frac{2}{3}(1-\epsilon)})+\alpha^{-\frac{1}{4}}(\tau+h^{1-\epsilon}+\delta^{\frac{2}{3}(1-\epsilon)})+\tau^{\frac{3}{4}}+h^{\frac{3}{2}(1-\epsilon)}+\delta^{1-\epsilon} +\alpha^{\frac{1}{4}}(\tau+h^{2(1-\epsilon)} +\delta^{\frac{4}{3}(1-\epsilon)})$.
\begin{itemize}
    \item[{\rm(i)}] If $q^\dag\in\mathcal{A}$, then
    there exists $c>0$ independent of $h$, $\tau$, $\delta$ and $\alpha$ such that 
    $$ \| q^*-q^{\dag} \|_{L^\infty(\Gamma_i)} \le c((\alpha^{-\frac12} + \alpha^{-\frac14})(\tau+\eta^\frac12+\alpha^\frac12(\tau+\eta))+\eta^\frac{3}{4}+\tau^\frac34).$$
    \item[{\rm(ii)}] If $q^\dag\in\mathcal{B}$, then for $T\rightarrow\infty$, there exist $c>0$ independent of $h,\tau,\delta,\alpha$, $\theta=\frac{1-2\epsilon}{3-2\epsilon} $ with $\epsilon\in (0,\frac{1}{2})$, and $\kappa\equiv \kappa(c_g,\underline{c}_q, \Bar{c}_q, c_0,\Omega) \in(0,1)$ such that
       $$ \| q^*-q^{\dag} \|_{L^\infty(\Gamma_i)} \le c( (\alpha^{-\frac12} + \alpha^{-\frac14})(\tau+\eta^\frac12+\alpha^\frac12(\tau+\eta))+\eta^\frac{3}{4}+\tau^\frac34) ^{\kappa\theta}.$$
\end{itemize}
\end{thm}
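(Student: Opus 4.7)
The plan is to mimic the proof of Theorem \ref{thm:error_estimate} in the elliptic case, using the new parabolic stability estimates in Theorem \ref{thm:stab_para} in place of \cite[Thm.~2.4]{Sincich:2007}/\cite[Thm.~4.1]{Hu:2015}, together with the \textit{a priori} estimate in Lemma \ref{lem:para_apriori} and the approximation results in Lemmas \ref{lem:para_error_L2_Neumann}--\ref{lem:para_error_L2_Dirichlet}. First, for $T$ large enough (so that in case (i) Theorem \ref{thm:stab_para}(i) applies, and in case (ii) the exponentially decaying factor in Theorem \ref{thm:stab_para}(ii) is controlled), the stability estimate reduces the analysis to bounding $\|u(T;q^*)-u(T;q^\dag)\|_{L^2(\Gamma_a')}$.

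Second, I would split by the triangle inequality
\begin{align*}
\|u(T;q^*)-u(T;q^\dag)\|_{L^2(\Gamma_a')}
&\le \|u(T;q^*)-u_{N,h}^M(q^*)\|_{L^2(\Gamma_a')}\\
&\quad + \|u_{N,h}^M(q^*)-u_{D,h}^\delta(q^*)\|_{L^2(\Gamma_a')} + \|z_h^\delta-f\|_{L^2(\Gamma_a')},
\end{align*}
using that $u_{D,h}^\delta(q^*)=z_h^\delta$ on $\Gamma_a'$ and $u(T;q^\dag)=f$ on $\Gamma_a'$. The first term is bounded by repeating the argument in Lemma \ref{lem:para_error_L2_Neumann} together with Lemma \ref{lem:interp-ineq}, giving a bound of order $h^{3(1-\epsilon)/2}+\tau$, which is absorbed into $\eta^{3/4}+\tau^{3/4}$. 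The third term is at most $c\delta\le c\eta^{3/4}$ by Assumption \ref{assum:regularity_para}(ii). The middle term is the key; by Lemma \ref{lem:interp-ineq},
\begin{equation*}
\|u_{N,h}^M(q^*)-u_{D,h}^\delta(q^*)\|_{L^2(\Gamma_a')}
\le c\bigl(\alpha^{-1}J_{\alpha,h}(q^*)\bigr)^{1/4}\bigl(J_{\alpha,h}(q^*)+\alpha^{-1}J_{\alpha,h}(q^*)\bigr)^{1/4},
\end{equation*}
and inserting the bound $J_{\alpha,h}(q^*)\le c(\tau^2+\eta+\alpha(\tau^2+\eta^2))$ from Lemma \ref{lem:para_apriori} yields, after estimating $J_{\alpha,h}(q^*)^{1/2}\le c(\tau+\eta^{1/2}+\alpha^{1/2}(\tau+\eta))$,
\begin{equation*}
\|u_{N,h}^M(q^*)-u_{D,h}^\delta(q^*)\|_{L^2(\Gamma_a')}\le c(\alpha^{-1/4}+\alpha^{-1/2})\bigl(\tau+\eta^{1/2}+\alpha^{1/2}(\tau+\eta)\bigr).
\end{equation*}
Combining these and raising to the power $\kappa\theta$ in case (ii) gives the claimed estimates.

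The main obstacle is case (ii). Theorem \ref{thm:stab_para}(ii) still carries the term $cT^{1/2-r}e^{-\min(\lambda(q^*),\lambda(q^\dag))T/2}\|q^*-q^\dag\|_{L^\infty(\Gamma_i)}$ on its right-hand side. Unlike the Lipschitz case, this term cannot be absorbed into the left-hand side because of the exponent $\kappa\theta<1$. The resolution is to take $T$ so large that this factor is strictly smaller than the data-error contribution, which is possible because the lower eigenvalue bound $\lambda(q)$ is uniform over $q\in\mathcal{B}$ (as $q$ is bounded below by $\underline{c}_q>0$). Sending $T\to\infty$ (as stated in the theorem) makes this term negligible relative to the data-fit terms, and the conclusion in case (ii) follows. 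A subtle point is that the factor $T^{-1}$ appearing in the approximation error of Lemma \ref{lem:para_error_L2_Dirichlet} is harmless as $T\to\infty$, so no conflict arises between the two asymptotic regimes.
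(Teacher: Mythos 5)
Your proposal is correct and follows essentially the same route as the paper's proof: the parabolic stability estimates of Theorem \ref{thm:stab_para} reduce matters to $\|u(T;q^*)-u(T;q^\dag)\|_{L^2(\Gamma_a')}$, which is split by the same three-term triangle inequality and bounded via Lemma \ref{lem:interp-ineq}, Lemma \ref{lem:para_error_L2_Neumann} and the \textit{a priori} bound $J_{\alpha,h}(q^*)^{1/2}\le c(\tau+\eta^{1/2}+\alpha^{1/2}(\tau+\eta))$ from Lemma \ref{lem:para_apriori}. Your remark on case (ii) — that the residual term $T^{1/2-r}e^{-\min(\lambda(q^*),\lambda(q^\dag))T/2}\|q^*-q^\dag\|_{L^\infty(\Gamma_i)}$ cannot be absorbed because of the H\"older exponent and must instead be made negligible by sending $T\to\infty$, using the uniform lower bound on $\lambda(q)$ over the admissible set — is a correct and slightly more explicit account of what the paper compresses into ``combining the preceding estimates with $T\rightarrow\infty$.''
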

\begin{proof}
In case (i), by Theorem \ref{thm:stab_para} (i), for sufficiently large $T$, we have
\begin{equation*}
  \| q^*-q^{\dag} \|_{L^\infty(\Gamma_i)} \le c\| u(T;q^*)-u(T;q^{\dag}) \|_{L^2(\Gamma_a')}.
\end{equation*}
The triangle inequality implies
\begin{align*}
        \|u(T;q^*)-u(T;q^{\dag}) \|_{L^2(\Gamma_a')}
        \le & \|u(T;q^*)-u_{N,h}^M(q^*)  \|_{L^2(\Gamma_a')} + \|u_{N,h}^M(q^*)-z_h^\delta  \|_{L^2(\Gamma_a')}\\
         &+\|z_h^\delta -u(T;q^{\dag}) \|_{L^2(\Gamma_a')}:={\rm I}+{\rm II} + {\rm III}.
    \end{align*}
By Lemmas \ref{lem:interp-ineq} and \ref{lem:para_error_L2_Neumann}, we arrive at
    \begin{align*}
        {\rm I} \le& c \|u(T;q^*)-u_{N,h}^M(q^*) \|_{L^2(\Omega)}^{\frac{1}{2}}\|u(T;q^*)-u_{N,h}^M(q^*) \|_{H^1(\Omega)}^{\frac{1}{2}}\\
        \le& c(h^{\frac{3}{2}(1-\epsilon)}+h^{\frac{1}{2}(1-\epsilon)}\tau^{\frac{1}{2}}+\tau) \le c(h^{\frac{3}{2}(1-\epsilon)} +\tau^{\frac{3}{4}}).
    \end{align*}
Similarly, Lemma \ref{lem:interp-ineq}  implies
\begin{align*}
 {\rm II}  =&\|u_{N,h}^M(q^*)-u_{D,h}^\delta(q^*)  \|_{L^2(\Gamma_a')}
 \le c \|u_{N,h}^M(q^*)-u_{D,h}^\delta(q^*)  \|_{L^2(\Omega)}^{\frac{1}{2}} \|u_{N,h}^M(q^*)-u_{D,h}^\delta(q^*)  \|_{H^1(\Omega)}^{\frac{1}{2}}\\
 \le & c(\alpha^{-1}J_{\alpha,h})^{\frac{1}{4}} (J_{\alpha,h}(q^*)+\alpha^{-1}J_{\alpha,h}(q^*))^{\frac{1}{4}}
 \leq  c(\alpha^{-\frac14}+\alpha^{-\frac12})J_{\alpha,h}(q^*)^\frac{1}{2}. 
\end{align*}
By Lemma  \ref{lem:para_apriori},
$  J_{\alpha,h}(q^*)^\frac12\le c(\tau+\eta^\frac12+\alpha^\frac12(\tau+\eta))$. Consequently,
\begin{equation*}
    {\rm II} \leq c(\alpha^{-\frac12} + \alpha^{-\frac14})(\tau+\eta^\frac12+\alpha^\frac12(\tau+\eta)).
\end{equation*}
The definition \eqref{eqn:para_noise} indicates
    $ {\rm III} \le c\delta.$
Combining the preceding estimates yields the assertion in case (i).\\
In case (ii), by Theorem \ref{thm:stab_para}(ii), for $r\in (\frac{1}{4},\frac{1}{2})$, $\theta=\frac{1-2\epsilon}{3-2\epsilon}$ and $\kappa\equiv
 \kappa(c_g, \underline{c}_q, \Bar{c}_q, c_0, \Omega)\in(0,1)$, we have
    \begin{equation*}
         \| q^*-q^{\dag} \|_{L^\infty(\Gamma_i)} \le c\big(\| u(T;q^*)-u(T;q^{\dag})\|_{L^2(\Gamma_a')}  + T^{ \frac{1}{2}-r }e^{-\min(\frac{\lambda(q^\dag)}{2},\frac{\lambda(q^*)}{2})T} \|q^*-q^\dag\|_{L^\infty(\Gamma_i)}  \big)^{\kappa\theta}.
    \end{equation*}
    By the triangle inequality, Lemma  \ref{lem:para_apriori} and definition \eqref{eqn:para_noise}, we obtain
    \begin{align*}
        \|u(T;q^*)-u(T;q^{\dag}) \|_{L^2(\Gamma_a')}
        \le &\|u(T;q^*)-u_{N,h}^M(q^*) \|_{L^2(\Gamma_a')} + \|u_{N,h}^M(q^*)-  z_h^\delta \|_{L^2(\Gamma_a')} +\|  z_h^\delta-u(T;q^{\dag}) \|_{L^2(\Gamma_a')}.
    \end{align*}
    Then combining the preceding estimates with $T\rightarrow\infty$ yields the assertion in case (ii).
\end{proof}
\begin{remark}\label{rem:conv_rate_para}
Theorem \ref{thm:para_error_estimate} provides a guideline for the \textit{a priori} selection of algorithmic parameters. The choice $h\sim \delta^{\frac{2}{3}}$, $\tau\sim \delta^{\frac{4}{3}}$ and $\alpha\sim \delta^{-\frac{4}{3}}$ yields a convergence rate $\| q_h^*-q^{\dag} \|_{L^\infty(\Gamma_i)} \le C\delta $.
\end{remark}

\section{Numerical experiments and discussions}\label{sec:numer}

Now we present numerical results to illustrate the proposed reconstruction algorithm. Throughout we take a square domain $\Omega=(-1,1)^2$ with the inaccessible part $\Gamma_i=\{1\}\times(-1,1)$ (i.e., the left edge), and unless otherwise stated, also take $\Gamma_a'=\Gamma_a$. We generate the exact data $f=u(q^\dag)|_{\Gamma_a'}$ using a much finer mesh, and  the noisy data $z^\delta$ by
\begin{equation*}
    z^\delta(x)=u^\dag(x)+\delta\|u^\dag\|_{L^\infty(\Gamma_a')} \xi(x) \quad\text{and}\quad z^\delta(x)=u^\dag(x,T)+\delta\|u^\dag(\cdot,T)\|_{L^\infty(\Gamma_a')} \xi(x),
\end{equation*}
where $\delta$ is the relative noise level, and $\xi $ follows the standard Gaussian distribution. We define $z_h^\delta=P_{h,\Gamma_a'} z^\delta$ as in Remark \ref{rem:noise} such that the assumptions \eqref{eqn:noise} and \eqref{eqn:para_noise} hold. The accuracy of a reconstruction $q^*$ relative to the exact one $q^\dag$ is measured by the relative $L^2(\Gamma_i)$ error $e_q=\|q^*-q^\dag\|_{L^2(\Gamma_i)}/\|q^\dag\|_{L^2(\Gamma_i)}$.

\subsection{The reconstructions with a known partition}
The first two examples are about $q^\dag\in \mathcal{A}$, i.e., with a known partition. The related optimization problems are minimized by the standard conjugate gradient method \cite{Jin:2007,Jin:2010} with an initial guess $q_0\equiv1$. The Gateaux derivative $J'_\alpha(q)$ can be computed using the adjoint technique, and the explicit formulas in the elliptic and parabolic cases are given in Appendix \ref{app:deriv}. For any point $x\in\Omega\subset\mathbb{R}^2$, we write $x=(x_1,x_2)$.
\begin{example}[Elliptic case]\label{ex:ell}
Take the boundary illumination $g(x_1,x_2)=1-x_1^2$, and consider the following three cases: {\rm(i)} $q^\dag(x_2)=1+\chi_{[0.2,1]}(x_2)$; {\rm(ii)} $q^\dag(x_2)=1+\chi_{[0.2,1]}(x_2)$, with $\Gamma_a'=\{1\}\times(-1,1)$; {\rm(iii)} $q^\dag(x_2)=1+\chi_{[0.2,0.6]}(x_2)$.
\end{example}

The relative errors are shown in Table \ref{tab:fixed_partition}, and the reconstructions in Fig. \ref{fig:reconstruction_fixed}, where in view of Remark \ref{rem:conv_rate_ell}, we choose the mesh size $h\sim \delta^{\frac{2}{3}}$ and the parameter $\alpha\sim \delta^{-\frac{4}{3}}$. In the experiment, we take a relatively small $\alpha$ so that the $L^2(\Omega)$ matching term in $J_\alpha$ does not dominate. For case (i), the numerical results indicate that the error $e_q$ decays to zero at a rate $O(\delta^{1.03})$ as $\delta\to0$, which agrees very well with the predicted rate $O(\delta)$ in Remark \ref{rem:conv_rate_ell}. We obtain highly accurate recoveries for up to $1\%$ noise level, cf. Fig. \ref{fig:reconstruction_fixed}. In case (ii), we still observe an $O(\delta^{1.01})$ convergence rate, indicating that partial measurement has little influence on the Lipschitz stability of the inverse Robin problem. However, the  accuracy of the reconstruction $q^*$ is lower than that for the full data on $\Gamma_a$ in case (i). In case (iii), the convergence rate is $O(\delta^{1.02})$, which agrees well with the theoretical estimate. By \cite[Theorem 2.4]{Sincich:2007} the Lipschitz constant increases exponentially with the number $N$ of pieces in the partition. Hence, compared with case (i), the reconstruction problem in case (iii) is far more challenging. Fig. \ref{fig:reconstruction_fixed} shows that the accuracy of the recovered coefficient $q^*$ deteriorates but is still reasonable.

\begin{table}[htb!]
  \centering\setlength{\tabcolsep}{3pt}
    \caption{The convergence rates for Examples \ref{ex:ell} and \ref{ex:para} with respect to $\delta$. $h_0$, $\tau_0$ and $\alpha_0$ denotes the initial mesh size, time step size and penalty parameter.}\label{tab:fixed_partition}
    \begin{tabular}{c|cccccc|cccccc|}
    \toprule
    \multicolumn{1}{c}{}&
    \multicolumn{6}{c}{(a) \ref{ex:ell}(i): $h_0=1/4$, $\alpha_0=1.6e\text{-4}$ }&\multicolumn{6}{c}{(b) \ref{ex:ell}(ii): $h_0=1/4$, $\alpha_0=1.6e\text{-4}$ }\\
    \cmidrule(lr){2-7} \cmidrule(lr){8-13}
    $\delta$   & 1e-2 & 5e-3 & 2e-3 & 1e-3 &5e-4 & \text{rate}  & 1e-2 & 5e-3 & 2e-3 & 1e-3 &5e-4 & \text{rate}\\
    \midrule
    $e_q$ & 8.70e-3 & 4.28e-3  &  1.51e-3 &  8.34e-4  &  4.00e-4 &$O(\delta^{1.03})$  &  8.88e-2  &  5.18e-2 & 2.21e-2  & 1.00e-2  &  4.26e-3 &$O(\delta^{1.01})$  \\
    \midrule
    \multicolumn{1}{c}{}&
    \multicolumn{6}{c}{(c) \ref{ex:ell}(iii): $h_0=1/8$, $\alpha_0=6.4e\text{-4}$ }&\multicolumn{6}{c}{(d) \ref{ex:para}(i): $h_0=1/4$, $\tau_0=1/5$, $\alpha=$1.6e-5 }\\
    \cmidrule(lr){2-7} \cmidrule(lr){8-13}
    $\delta$  & 1e-2 & 5e-3 & 2e-3 & 1e-3 &5e-4 & \text{rate}  & 1e-2 & 5e-3 & 2e-3 & 1e-3 &5e-4 & \text{rate}  \\
    \midrule
    $e_q$ & 1.98e-2  &  1.14e-2 & 3.34e-3  & 1.83e-3  & 1.06e-3  &$O(\delta^{1.02})$ & 6.52e-3 & 3.96e-3  &  1.57e-3 & 8.45e-4  &  3.73e-4 &$O(\delta^{0.96})$ \\
    \midrule
    \multicolumn{1}{c}{}&
    \multicolumn{6}{c}{(e) \ref{ex:para}(ii): $h_0=1/6$, $\tau_0=1/5$ $\alpha_0=$3.6e-5}\\
    \cmidrule(lr){2-7}
    $\delta$  & 1e-2 & 5e-3 & 2e-3 & 1e-3 &5e-4 & \text{rate}  \\
    \cmidrule(lr){1-7}
    $e_q$ &  4.82e-2 & 2.72e-2  &  9.77e-3 & 5.14e-3  &  2.14e-3 &$O(\delta^{1.04})$ \\
    \bottomrule
    \end{tabular}
\end{table}

\begin{figure}[hbt!]
    \centering
        \includegraphics[width=0.3\textwidth]{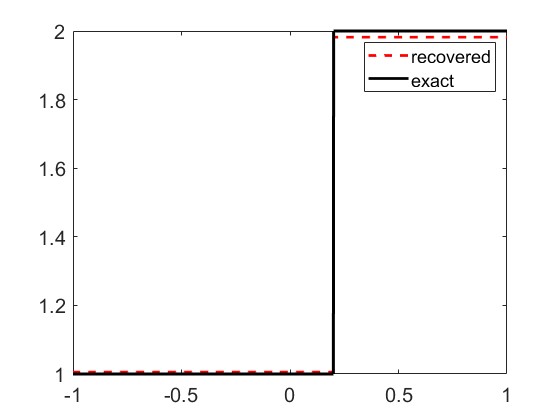}
        \includegraphics[width=0.3\textwidth]{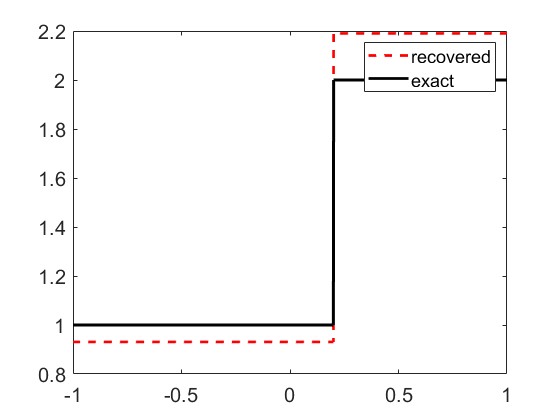}
        \includegraphics[width=0.3\textwidth]{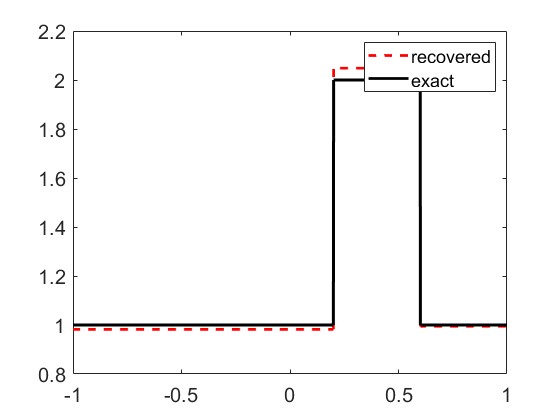}
        \includegraphics[width=0.3\textwidth]{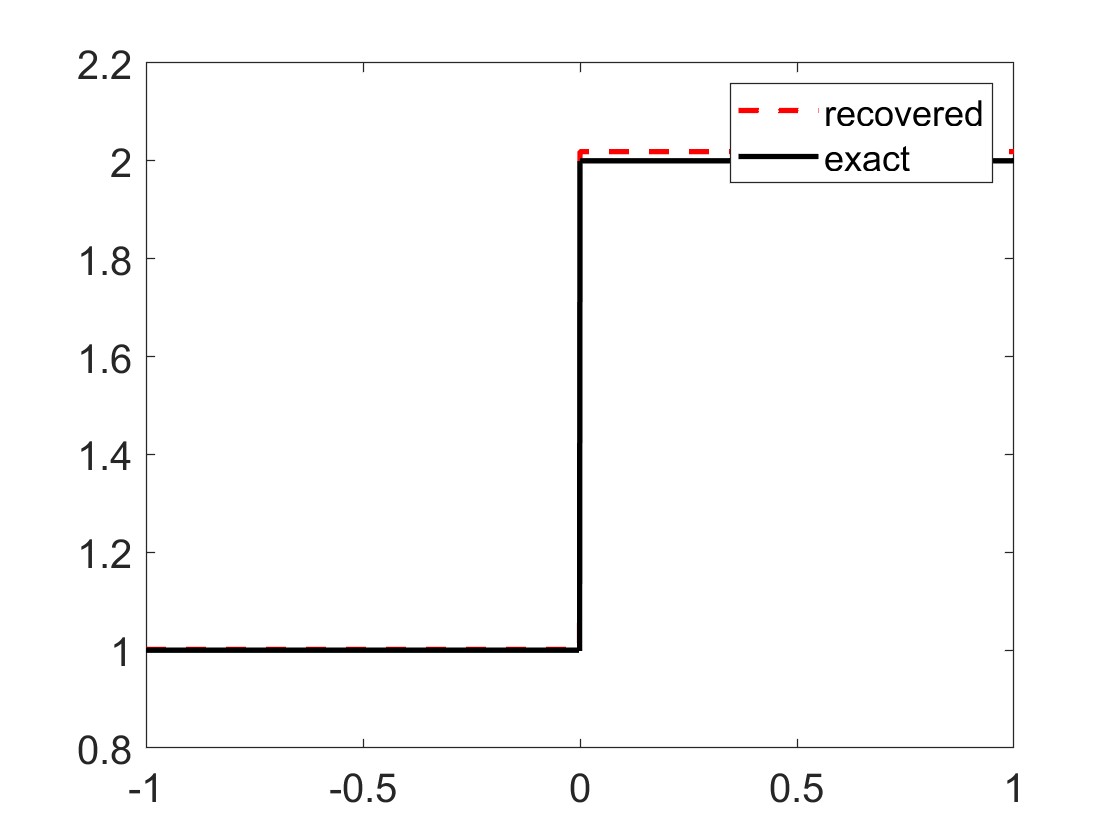}
        \includegraphics[width=0.3\textwidth]{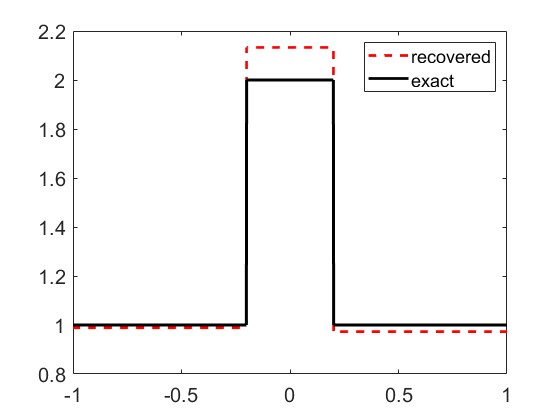}
\caption{The reconstruction of Examples \ref{ex:ell} and \ref{ex:para} with $\delta=1\%$ noise. Top row: reconstructions for Example \ref{ex:ell}(i)--(iii). Bottom row: reconstructions for Example \ref{ex:para}(i)--(ii). }
\label{fig:reconstruction_fixed}
\end{figure}

The next example is about the parabolic case. 
\begin{example}[Parabolic case]\label{ex:para}
Consider the time independent boundary illumination $g(x_1,x_2)=1-x_1^2$. To obtain an initial data $u_0$ satisfying Assumption \ref{assum:regularity_para}, let $u_0$ solve the following elliptic problem with the source $f(x_1,x_2)=\frac{1}{4}\cos(\pi x_1)\cos(\pi x_2)$:
\begin{equation*}
\left\{\begin{aligned}    -\Delta u_0&=f,\quad\mbox{in } \Omega, \\
\partial_nu_0+q^\dag u_0&=0,\quad\mbox{on } \Gamma_i, \\ \partial_nu_0&=g,\quad\mbox{on } \Gamma_a.
\end{aligned}\right.
\end{equation*}
The measurement is taken at the terminal time $T=10$ on the full part $\Gamma_a$. Consider the following two cases: {\rm(i)} $q(x_2)=1+ \chi_{[0,1]}(x_2)$; {\rm(ii)} $q(x_2)=1+ \chi_{[-0.2,0.2]}(x_2)$.
\end{example}

Table \ref{tab:fixed_partition} and Fig. \ref{fig:reconstruction_fixed} show the relative errors $e_q$ and reconstructions $q^*$, respectively. We choose the mesh size $h\sim\delta^{\frac{2}{3}}$, time step $\tau\sim\delta^{\frac{4}{3}} $ and parameter $\alpha\sim \delta^{-\frac{4}{3}}$ following Remark \ref{rem:conv_rate_para}. In case (i), the relative error $e_q$ exhibits an $O(\delta^{0.96})$ convergence rate, agreeing well with the prediction by Theorem \ref{thm:para_error_estimate}(i). The reconstruction is accurate  for up to $1\%$ noise level. In case (ii), the convergence rate is $O(\delta^{1.04})$, which is comparable with the prediction, and the recovered coefficient $q^*$ does not achieve very high accuracy, similar to case (iii) of Example \ref{ex:ell}.

\subsection{The reconstructions with an unknown partition}
Now we consider the case $q^\dag \in \mathcal{B}$, that is, the Robin coefficient is piecewise constant on an unknown partition. The discontinuity locations and the magnitude of the Robin coefficient $q$ have different influence on the observational data $z^\delta$, which make the objective $J_\alpha$ more challenging to optimize. Thus we use the gradient descent method to solve the optimization problem. Moreover, in order to get reliable reconstruction results, we fix the value of the homogeneous background at the exact one during the optimization process. First we consider the elliptic case. 
\begin{example}[Elliptic case]\label{ex:unknown_ell}
Take the boundary illumination $g(x_1,x_2)=1-x_1^2$, and consider the following three cases, with the initial guess $q_0$:
{\rm(i)} $q^\dag(x_2)=1+\chi_{[-0.3,0.3]}(x_2)$, with $q_0(x_2)=1+0.5\chi_{[-0.4,0.1]}(x_2) $, $\Gamma_a'=\{1\}\times(-1,1)$;
{\rm (ii)} $q^\dag(x_2)=1+\chi_{[-0.6,0.2]}(x_2)+1.5\chi_{[0.2,0.6]}(x_2)$, with $q_0(x_2)=1+0.5\chi_{[-0.5,-0.3]}(x_2)+0.5\chi_{[ 0.3, 0.5]}(x_2) $; {\rm (iii)} $q^\dag(x_2)=1+\chi_{[-0.6,0.6]}(x_2)$, with $q_0(x_2)=1+0.5\chi_{[-0.4,-0.1]}(x_2)+0.1\chi_{(-0.1,0.3]}(x_2)  $.
\end{example}

By Theorem \ref{thm:error_estimate} (ii), the relative error $e_q$ exhibits a H\"older rate, but the H\"older index $\kappa$ is implicit. We examine the convergence rate for Example \ref{ex:unknown_ell}(i). Table \ref{tab:unknown_partition} shows that the relative error $e_q$ decays at a rate $O(\delta^{0.50})$, in which the mesh size $h$ is chosen as $h\sim \delta^{\frac{2}{3}}$ and the penalty parameter $\alpha\sim \delta^{-\frac{4}{3}}$, as suggested by Remark \ref{rem:conv_rate_ell}.  Fig. \ref{fig:reconstruction_unknown} shows the optimization process and reconstruction results (with the choice $h=1/20$ and $\alpha=$4e-4). In case (i), the initial guess $q_0$ is located partially outside the exact inhomogeneity. The objective $J_{\alpha,h}$ decreases rapidly during the first few hundreds of iterations. The relative error $e_q$ first decreases and then increases a little bit, but it will decrease for further iterations. To speed up the convergence, we have increased the range of line search in the gradient descent method after 1000 iterations. Finally, after about 4000 steps, the relative error $e_q$ become nearly flat with small oscillations, indicating the convergence of the algorithm. We obtain a reasonable reconstruction with a relative error $e_q=$5.47e-2. The behavior of the objective value $J_{\alpha,h}(q^k)$ and error $e_q(q^k)$ show the challenges of reconstructing a piecewise constant Robin coefficient on an unknown partition.
Case (ii) is about reconstructing two inhomogeneities with the initial guess located inside. This choice leads to the fast decay of the objective value $J_{\alpha,h}(q^k)$ and relative error $e_q$. The recovered Robin coefficient $q^*$ can capture both the location and magnitude with a relative error $e_q=$9.74e-2.
In case (iii), the exact Robin coefficient $q^\dag$ has three portions while we assume there are four portions. The function value $J_{\alpha,h}(q^k)$ and relative error $e_q$ reach convergence in tens of iterations, and the reconstruction has a relative error $e_q=$6.75e-2.

\begin{table}[htp!]
  \centering
  \setlength{\tabcolsep}{3pt}
    \caption{The convergence rates for Example \ref{ex:unknown_ell}(i) and Example \ref{ex:unknown_para}(i) with respect to $\delta$. $h_0$, $\tau_0$ and $\alpha_0$ denote the initial mesh size, time step size and penalty parameter.}\label{tab:unknown_partition}
    \begin{tabular}{c|cccccc|cccccc|}
    \toprule
    \multicolumn{1}{c}{}&
    \multicolumn{6}{c}{(a) \ref{ex:unknown_ell}(i): $h_0=1/4$, $\alpha_0=$1.6e-5 }&\multicolumn{6}{c}{(b) \ref{ex:unknown_para}(i): $h_0=1/4$, $\tau_0=1/5$, $\alpha_=$1.6e-5 }\\
    \cmidrule(lr){2-7} \cmidrule(lr){8-13}
    $\delta$   & 1e-2 & 5e-3 & 2e-3 & 1e-3 &5e-4 & \text{rate}   & 1e-2 & 5e-3 & 2e-3 & 1e-3 &5e-4 & \text{rate}\\
    \midrule
    $e_q$ & 6.82e-2 & 5.45e-2  &  4.07e-2 &  2.27e-2  &  1.54e-2 &$O(\delta^{0.50})$  &  8.75e-2   & 7.93e-2   & 7.05e-2 &  6.01e-2  &  5.46e-2  & $O(\delta^{0.16})$ \\
    \bottomrule
    \end{tabular}
\end{table}
\begin{figure}
    \centering
    \begin{tabular}{ccc}
    \includegraphics[width=0.33\textwidth]{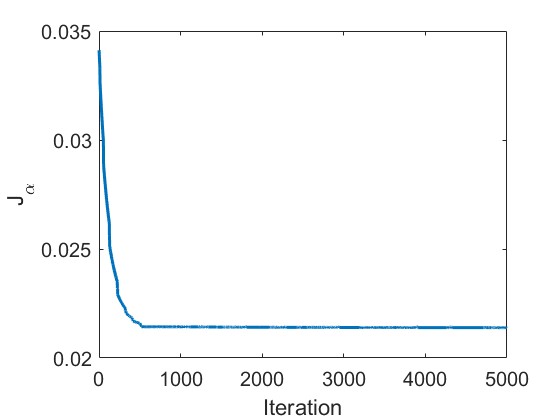}
    &    \includegraphics[width=0.33\textwidth]{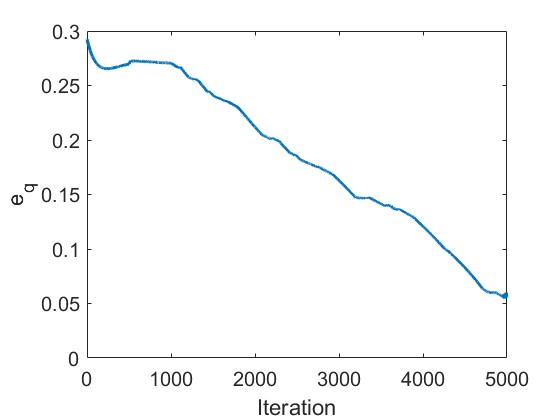}
     &   \includegraphics[width=0.33\textwidth]{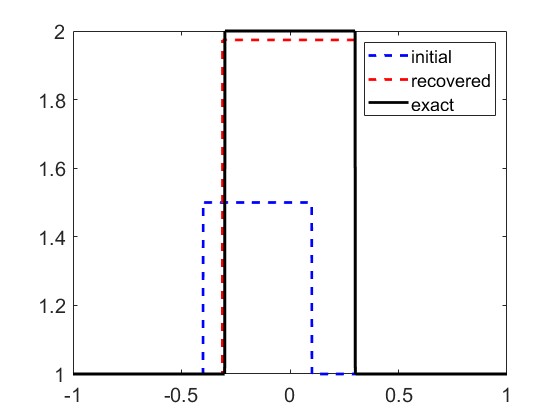}  \\
   \includegraphics[width=0.33\textwidth]{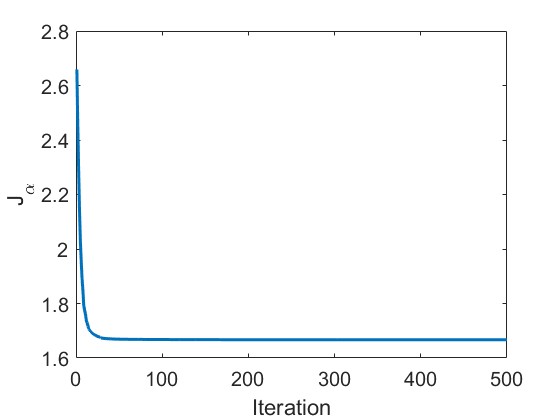}
    &    \includegraphics[width=0.33\textwidth]{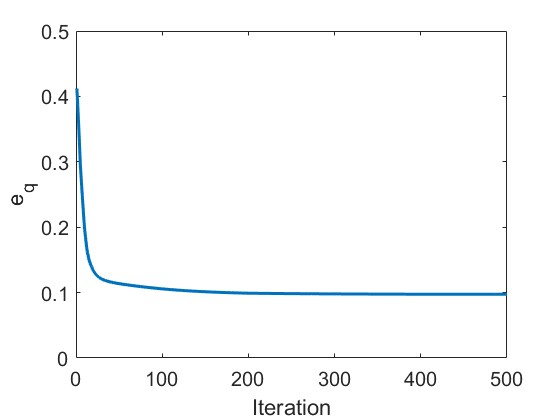}
    &    \includegraphics[width=0.33\textwidth]{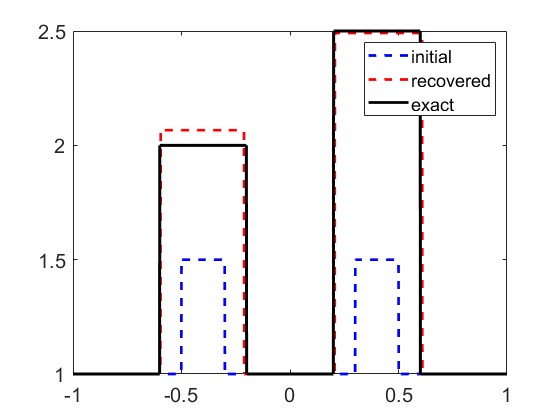}    \\
        \includegraphics[width=0.33\textwidth]{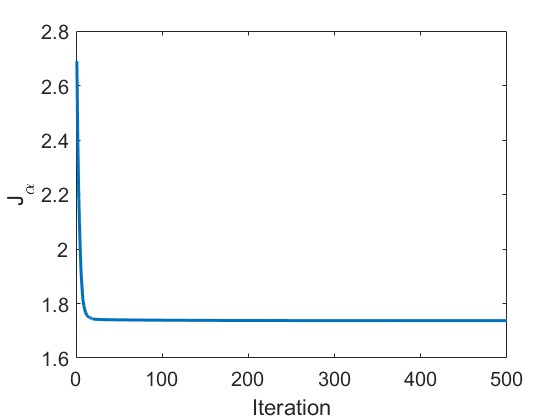}
& \includegraphics[width=0.33\textwidth]{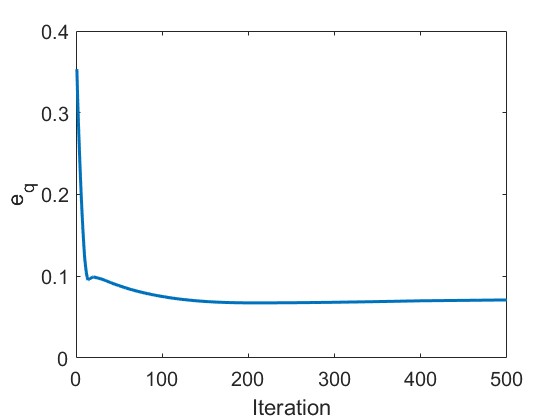}
& \includegraphics[width=0.33\textwidth]{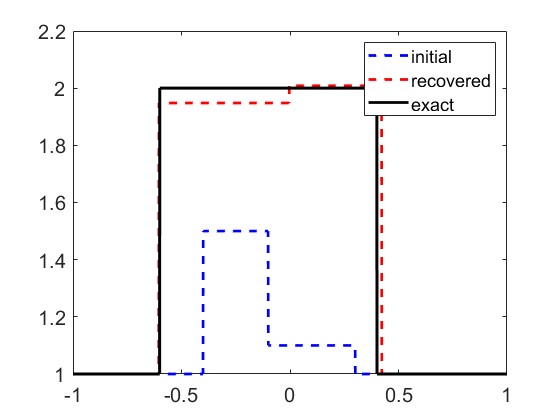}  \\      \includegraphics[width=0.33\textwidth]{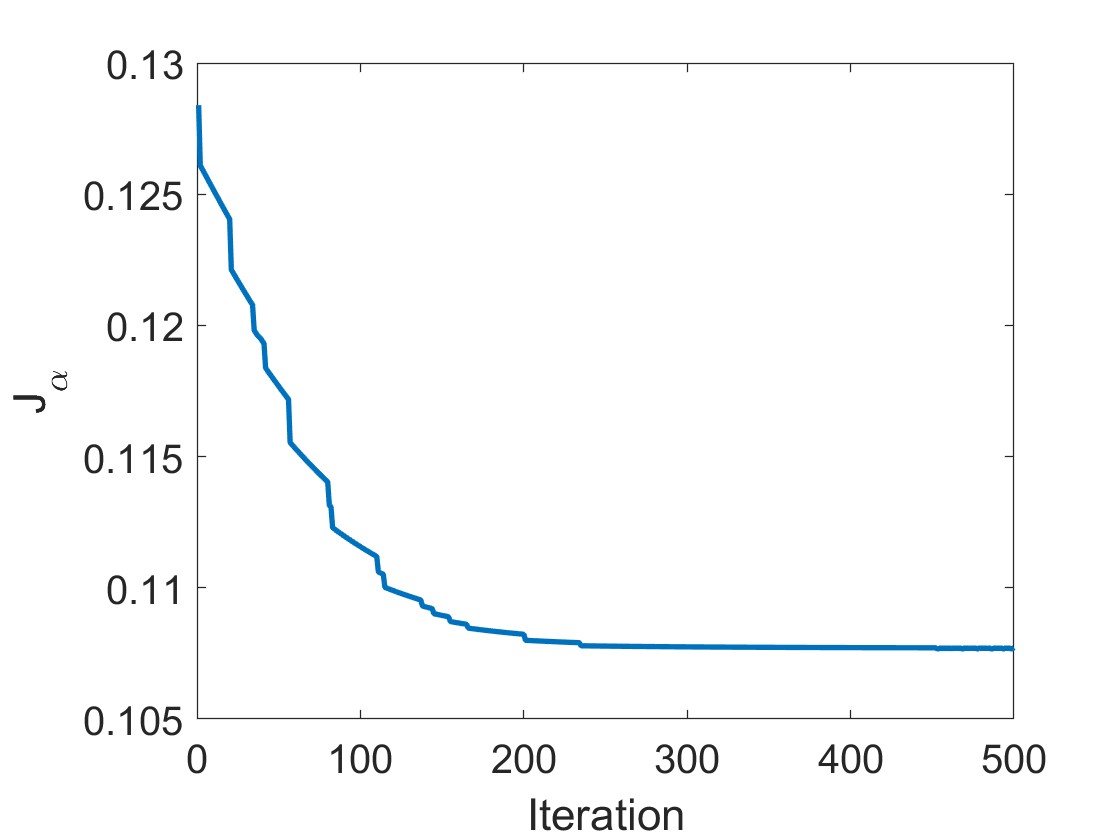}
&\includegraphics[width=0.33\textwidth]{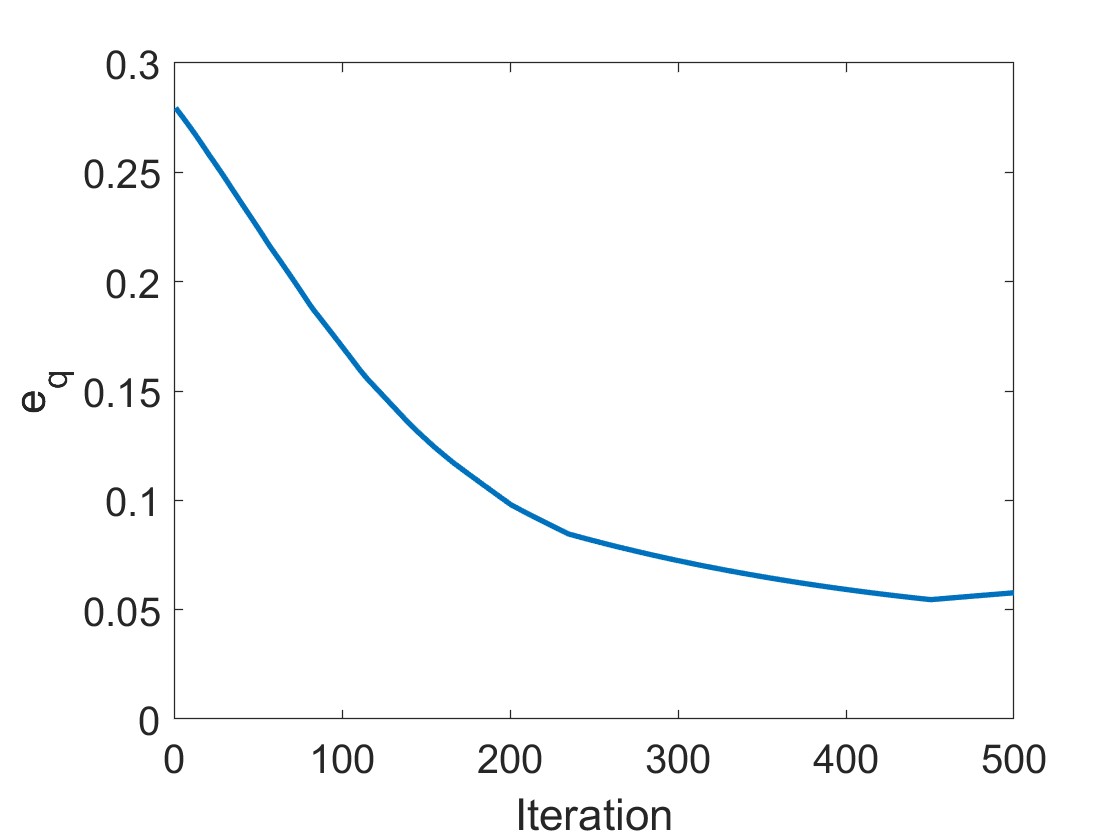}
&\includegraphics[width=0.33\textwidth]{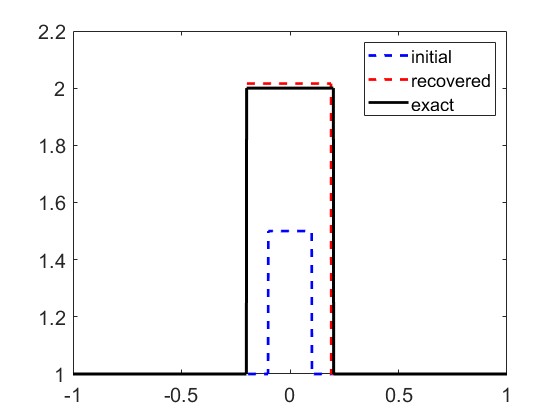}   \\
\includegraphics[width=0.33\textwidth]{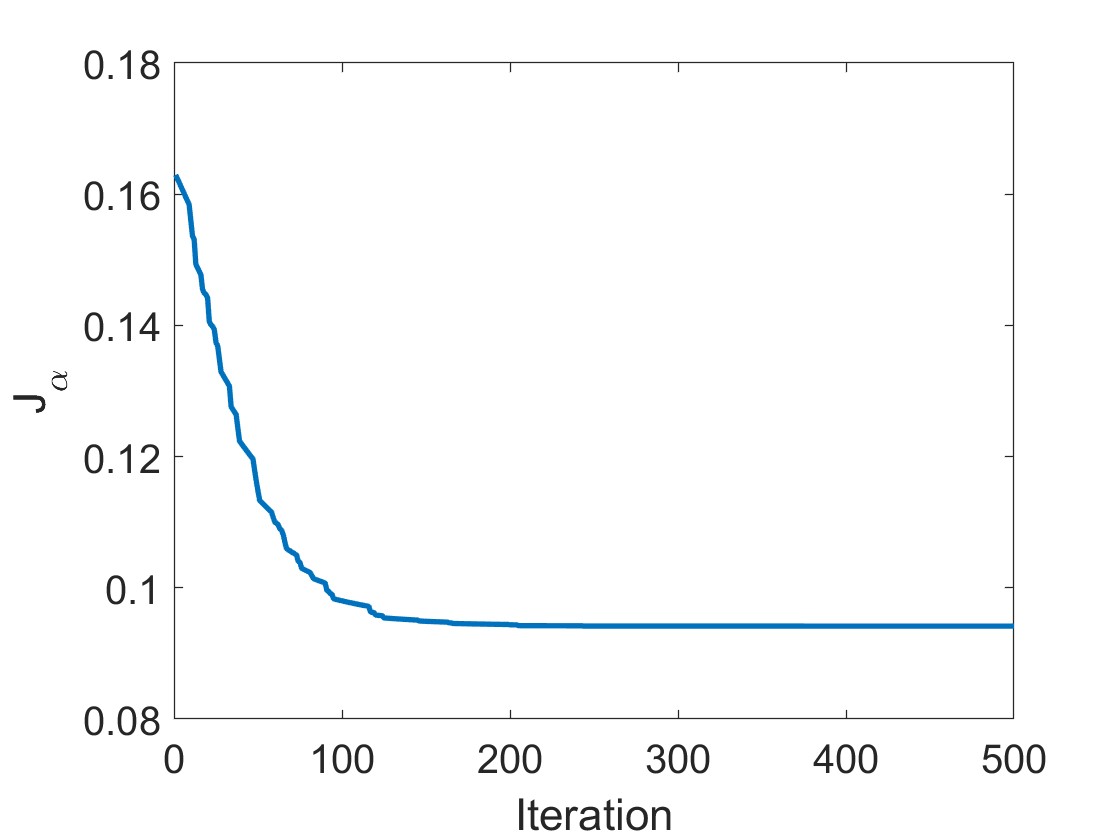}
&\includegraphics[width=0.33\textwidth]{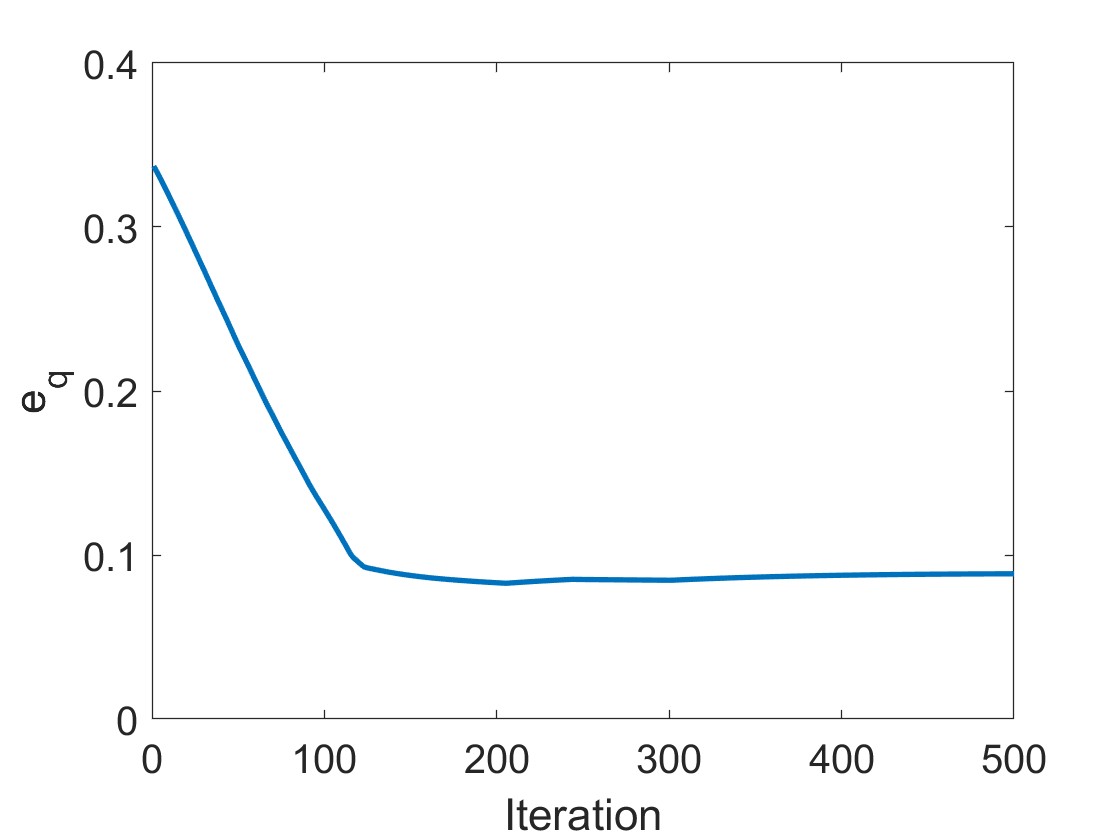}
&\includegraphics[width=0.33\textwidth]{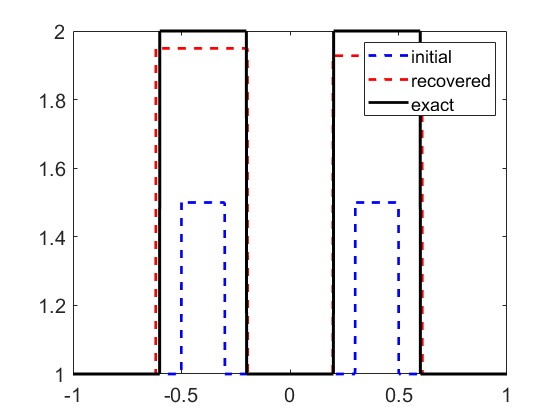}\\
(a) $J_{\alpha,h}(q^k)$ versus $k$ & (b) evolution of $e_q(q^k)$ versus $k$ & (c)  reconstruction $q^*$
\end{tabular}
 \caption{The optimization dynamics and reconstructions for Examples \ref{ex:unknown_ell} and \ref{ex:unknown_para} with noise level $1\%$.}
\label{fig:reconstruction_unknown}
\end{figure}

The last example shows the reconstructions in the parabolic case.
\begin{example}[Parabolic case]\label{ex:unknown_para}
 The boundary data $g$ and initial data $u_0$ are identical with that in Example \ref{ex:para}. The measurement is taken at the terminal time $T=10$ on the full part $\Gamma_a$. Consider the following two cases:
{\rm(i)} $q^\dag(x_2)=1+\chi_{[-0.2,0.2]}(x_2)$, with $q_0(x_2)=1+0.5\chi_{[-0.1, 0.1]}(x_2) $; {\rm(ii)} $q^\dag(x_2)=1+\chi_{[-0.6,-0.2]}(x_2)+\chi_{[0.2,0.6]}(x_2)$, with $q_0(x_2)=1+0.5\chi_{[-0.5,-0.3]}(x_2)+0.5\chi_{[0.3,0.5]}(x_2) $.
\end{example}

First we examine the convergence rate for Example \ref{ex:unknown_para}(i). In Table \ref{tab:unknown_partition}, we show the relative error $e_q$, where we take the parameters as $h\sim \delta^{\frac{2}{3}}$, $\tau\sim \delta^{\frac{4}{3}}$ and $\alpha\sim \delta^{-\frac{4}{3}}$.. The error $e_q$ is observed to decay at a rate $O(\delta^{0.16})$, which is much lower than the elliptic case in Example \ref{ex:unknown_ell}(i). This might be attributed to the choice of $T$, since Theorem \ref{thm:para_error_estimate}(ii) requires $T\rightarrow\infty$ in order to get the desired error bound.  Fig. \ref{fig:reconstruction_unknown} presents the optimization process and the reconstruction results (with the choice $h=1/20$, $\tau=1/100$ and $\alpha=$ 4e-4). The relative error $e_q$ is 5.46e-2 and 8.28e-2 for cases (i) and (ii), respectively. These results indicate that the approach also works reasonably well in the parabolic case, when the partition is unknown.

\section{Conclusion}
In this work we have investigated the numerical reconstruction of a piecewise constant Robin coefficient from the Cauchy data. We proposed a numerical approach based on the Kohn-Vogelius functional, discretized with the Galerkin FEM on specially graded meshes motivated by the solution decomposition. We have established error bounds on the discrete approximation that are explicit in terms of the noise level, regularization parameter and mesh size (and also the time step size in the parabolic case). We have also presented several numerical experiments illustrating the feasibility of the scheme. 
Extending the proposed scheme and its corresponding error analysis to the three-dimensional setting is a natural and important next step. However, this presents additional challenges, such as increased geometric complexity and the heightened difficulty of deriving suitable stability estimates. Another promising direction is investigating the inverse Robin problem in fluid dynamics, particularly for the Stokes system. In this context, the H\"older-type stability estimate remains largely unexplored, which presents an opportunity for significant advances.

\appendix
\section{Proof of Lemma \ref{lem:para_error_L2_Neumann}}

The proof employs an operator theoretic argument \cite{Fujita:1991}. %First, we define the discrete analogue $\mathcal{N}_h: H^{\frac{1}{2}}(\Gamma_a)\rightarrow V_h$ of the operator $\mathcal{N}$ defined in \eqref{eqn:op_N} such that $\mathcal{N}_h g$ solves
%    \begin{equation*}
%        (\nabla \mathcal{N}_h g,\nabla v_h) + (q \mathcal{N}_h g, v_h)_{L^2(\Gamma_i)}=(g, v_h)_{L^2(\Gamma_a)}, \quad \forall v_h\in V_h.
%    \end{equation*}
First, we define the discrete Laplacian operator $A_h:V_h\rightarrow V_h$ by $ (A_h \phi_h,v_h)  =(\nabla \phi_h, \nabla v_h) + (q \phi_h, v_h)_{L^2(\Gamma_i)}$ for all $\phi_h, v_h\in V_h$. Then the solution $u_{N,h}^m$ to problem \eqref{eqn:para_dis_constraint_NR} can be represented as \cite[Section 3.1]{JinZhou:2023book}
    \begin{equation}\label{eqn:dis_sol_reps}
        u_{N,h}^m=R_h u_0-\frac{1}{2\pi \mathrm{i}}\int_{\Gamma_{\theta,\sigma}^\tau} e^{zt_{m}}e^{-z\tau } \delta_\tau(e^{-z\tau })^{-1}( \delta_\tau(e^{-z\tau }) +A_h)^{-1} A_h R_hu_0 \d z,
    \end{equation}
with the kernel function $\delta_\tau(\xi)=\frac{1-\xi}{\tau}$ and the contour $\Gamma_{\theta,\sigma}^{\tau}:=\{z\in\Gamma_{\theta,\sigma}: |\Im(z)|\le \pi/\tau \}$ with $\theta\in (\frac{\pi}{2},\pi)$ close to $\pi/2$ (oriented counterclockwise). Using standard energy argument \cite[p. 92]{Thomee:2007}, the following discrete resolvent estimates hold for all $z\in \Sigma_{\theta}:=\{0\ne z\in\mathbb{C}: |\arg(z)|\le \theta \}$:
\begin{equation}\label{eqn:dis_resolvent}
   \|(z+A_h)^{-1}\|_{L^2(\Omega)\rightarrow L^2(\Omega)}\le c\min(1,|z|^{-1})\quad \mbox{and}\quad
             \|(\delta_\tau(e^{-z\tau})+A_h)^{-1}\|_{L^2(\Omega)\rightarrow L^2(\Omega)}\le c\min(1,|z|^{-1}).
     \end{equation}

Now we  can bound the error. First, we bound the spatial discretization error. Note that $u(t)$ can be represented as
    \begin{equation*}
        u(t)=u_0-\frac{1}{2\pi \mathrm{i}}\int_{\Gamma_{\theta,\sigma}} e^{zt}z^{-1}(z+A)^{-1} Au_0 \d z.
    \end{equation*}
Further, let $u_{N,h}$ be the spatially semidiscrete solution given by
     \begin{equation*}
         u_{N,h}(t)=R_h u_0-\frac{1}{2\pi \mathrm{i}}\int_{\Gamma_{\theta,\sigma}} e^{zt}z^{-1}(z+A_h)^{-1} A_h R_hu_0 \d z.
     \end{equation*}
We first estimate the spatial discretization error $e_{N,h}:=u(T)-u_{N,h}(T)$. Let $K(z)=(z+A)^{-1}-(z+A_h)^{-1}P_h$, with $P_h$ denoting the $L^2(\Omega)$ projection on the the finite element space $V_h$. By \cite[Theorem 7.1]{Fujita:1991}, the following estimates hold
\begin{equation*}
         \|K(z) \|_{L^2(\Omega)\rightarrow H^s(\Omega)}\le ch^{(2-s)(1-\epsilon)}, \quad s=0,1.
     \end{equation*}
By the estimate \eqref{eqn:error_Rh}, the identity $A_hR_h=P_h A$ \cite[p. 11]{Thomee:2007}, we deduce that for $s=0,1$
\begin{align*}
\|e_{N,h}\|_{H^s(\Omega)} \le  &\|u_0-R_hu_0\|_{H^s(\Omega)}+C\int_{\Gamma_{\theta,\sigma}} |e^{zT}||z|^{-1}\|K(z) \|_{L^2(\Omega)\rightarrow H^s(\Omega)}\|Au_0\|_{L^2(\Omega)}\d |z|\\
   \le & ch^{(2-s)(1-\epsilon)}+ch^{(2-s)(1-\epsilon)}\left(\int_{T^{-1}}^{\infty} e^{-c\rho T}\rho^{-1}\d \rho +\int_{-\theta}^{\theta} e^{\cos\psi} \d \psi  \right)\le ch^{(2-s)(1-\epsilon)}.
%\|e_{N,h}\|_{H^1(\Omega)}
% \le&  \|u_0-R_h u_0\|_{H^1(\Omega)}+c\int_{\Gamma_{\theta,\sigma}} |e^{zT}||z|^{-1}\|K(z) \|_{L^2(\Omega)\rightarrow H^1(\Omega)}\|Au_0\|_{L^2(\Omega)}\d |z|\\
%         \le& ch^{1-\epsilon}.
\end{align*}
Let $e_{N,h}^M:=u_{N,h}(T)-u_{N,h}^M$ be the time discretization error. Note that $u_{N,h}^m$ is represented by \eqref{eqn:dis_sol_reps}. Hence, with $\widetilde{K}(z)=z^{-1}(z+A_h)^{-1}-e^{-z\tau } \delta_\tau(e^{-z\tau })^{-1}( \delta_\tau(e^{-z\tau }) +A_h)^{-1}$, 
\begin{align*}
         e_{N,h}^M
         =&\frac{1}{2\pi \mathrm{i}}\int_{\Gamma_{\theta,\sigma}\setminus\Gamma_{\theta,\sigma}^\tau} e^{zT}z^{-1}(z+A_h)^{-1} A_h R_hu_0 \d z
         +\frac{1}{2\pi \mathrm{i}}\int_{\Gamma_{\theta,\sigma}^\tau} e^{zT}\widetilde{K}(z) A_h R_hu_0 \d z:=\mathrm{I}+\mathrm{II}.
     \end{align*}
     By the discrete resolvent estimates in \eqref{eqn:dis_resolvent}, we have for $s=0,1$
\begin{align*}
\|\mathrm{I}\|_{H^s(\Omega)}
        \le& c\int_{\Gamma_{\theta,\sigma}\setminus\Gamma_{\theta,\sigma}^\tau} |e^{zT}||z|^{-1}\|(z+A_h)^{-1}\|_{L^2(\Omega)\rightarrow H^s(\Omega)} \|Au_0\|_{L^2(\Omega)} \d |z|
        \le c\int_{\tau^{-1}}^{\infty} e^{-c\rho T} \rho^{-(2-\frac{s}{2})} \d \rho\le cT^{-\frac{s}{2}}\tau.
%\|\mathrm{I}_1\|_{H^1(\Omega)}
%\le &c\int_{\Gamma_{\theta,\sigma}\setminus\Gamma_{\theta,\sigma}^\tau} |e^{zT}||z|^{-1}\|(z+A_h)^{-1}\|_{L^2(\Omega)\rightarrow H^1(\Omega)} \|Au_0\| \d |z|\le c\int_{\tau^{-1}}^{\infty} e^{-c\rho T} \rho^{-\frac{3}{2}} \d \rho\le cT^{-\frac{1}{2}}\tau.
\end{align*}
Meanwhile, by Taylor expansion, we deduce that for any $z\in \Gamma_{\theta,\sigma}^\tau$, the following estimates hold
    \begin{equation*}
        |1-e^{-z\tau}|\le c|z|\tau,\quad c|z|\le |\delta_\tau(e^{-z\tau})|\le c|z|,\quad |\delta_\tau(e^{-z\tau})-z|\le c\tau |z|^2.
    \end{equation*}
These estimates and the discrete resolvent estimates in \eqref{eqn:dis_resolvent} imply for $s=0,1$,
    \begin{align*}
         \|\widetilde{K}(z)\|_{L^2(\Omega)\rightarrow H^s(\Omega)}&\le  c\tau |z|^{-(1-\frac{s}{2})}.
%         \|z^{-1}(z+A_h)^{-1}-e^{-z\tau } \delta_\tau(e^{-z\tau })^{-1}( \delta_\tau(e^{-z\tau }) +A_h)^{-1}\|_{L^2(\Omega)\rightarrow H^1(\Omega)}&\le  c\tau |z|^{-\frac{1}{2}}.
    \end{align*}
Consequently,
    \begin{align*}
        \|\mathrm{II}\|_{L^2(\Omega)}\le C\tau \int_{\Gamma_{\theta,\sigma}^\tau} |e^{zT}| |z|^{-(1-\frac{s}{2})} \d |z|\le cT^{-\frac{s}{2}}\tau.
    \end{align*}
This proves the first statement. Using the solution representations
$\partial_t u(t)=-\frac{1}{2\pi \mathrm{i}}\int_{\Gamma_{\theta,\sigma}} e^{zt}(z+A)^{-1} Au_0 \d z$ and
$\bar{\partial}_\tau u_{N,h}^m=-\frac{1}{2\pi \mathrm{i}}\int_{\Gamma_{\theta,\sigma}^\tau} e^{zt_{m}}e^{-z\tau }  ( \delta_\tau(e^{-z\tau }) +A_h)^{-1} A_h R_hu_0 \d z$ and repeating the preceding argument yield
    \begin{equation*}
        \|  \bar{\partial}_\tau u_{N,h}^M( q^{\dag})- \partial_t u(T;q^{\dag}) \|_{L^2(\Omega)}\le cT^{-1} (h^{2(1-\epsilon)}+\tau).
    \end{equation*}
This completes the proof of the lemma.

\section{Gateaux derivative of the functional}\label{app:deriv}
The Gateaux derivatives of the Kohn-Vogelius functionals are given below.
\begin{lem}\label{lem:derivative}
The Gateaux derivative $J_\alpha'(q)$ of $J_\alpha$ in \eqref{eqn:cts_functional},with respect to $q$ is given by
\begin{equation*}
    J_\alpha'(q)=\left[u_D^2-u_N^2+2\alpha(u_Dv_D-u_Nv_N)]\right|_{\Gamma_i},
\end{equation*}
where $v_N$ and $v_D$ are respectively weak solutions to
\begin{equation*}
    \left\{
        \begin{aligned}
            -\Delta v_N&=u_N-u_D, &&\mbox{ in }\Omega,\\
            \partial_n v_N+ qv_N&=0, &&\mbox{ on } \Gamma_i,\\
            \partial_n v_N &=0, &&\mbox{ on } \Gamma_a,\\
        \end{aligned}
    \right.  \quad \mbox{and} \quad   \left\{
        \begin{aligned}
            -\Delta v_D&=u_N-u_D, &&\mbox{ in }\Omega,\\
            \partial_nv_D+ qv_D&=0, &&\mbox{ on } \Gamma_i,\\
            \partial_nv_D &=0, &&\mbox{ on } \Gamma_a\setminus\Gamma_a',\\
              v_D  &=0, &&\mbox{ on }  \Gamma_a'.\\
        \end{aligned}
    \right.
\end{equation*}
Similarly, for the functional $J_\alpha$ in \eqref{eqn:para_cts_functional}, the Gateaux derivative $J_\alpha'(q)$ of $J_\alpha$ is given by
\begin{equation*}
    \begin{aligned}
            J_\alpha'(q)=&\Bigg[u_D^2-u_N(T)^2+2\int_0^T \partial_t u_N(t) v_N(t) \d t\\
            &\qquad+2\alpha\left(u_Dv_D-\int_0^T u_N(t)v_N(t)\d t+\int_0^T \partial_t u_N(t) w(t) \d t \right)\Bigg]\Bigg|_{\Gamma_i},
    \end{aligned}
\end{equation*}
where $v_N$, $v_D$ and $w$ are solutions to
\begin{equation*}
    \left\{
        \begin{aligned}
            -\partial_t v_N-\Delta v_N&=(u_N-u_D)\delta_T(t), &&\mbox{ in }\Omega\times (0,T),\\
            \partial_n v_N+ qv_N&=0, &&\mbox{ on } \Gamma_i\times(0,T),\\
            \partial_n v_N &=0, &&\mbox{ on } \Gamma_a\times(0,T),\\
            v_N(T)&=0, &&\mbox{ in } \Omega,
        \end{aligned}
    \right.  \qquad   \left\{
        \begin{aligned}
            -\Delta v_D&=u_N-u_D, &&\mbox{ in }\Omega,\\
            \partial_n v_D+ qv_D&=0, &&\mbox{ on } \Gamma_i,\\
            \partial_n v_D &=0, &&\mbox{ on } \Gamma_a\setminus\Gamma_a',\\
              v_D  &=0, &&\mbox{ on }  \Gamma_a'.\\
        \end{aligned}
    \right.
\end{equation*}
\begin{equation*}
      \mbox{and}\quad  \left\{
        \begin{aligned}
            -\partial_t w-\Delta w&=v_D\delta_T(t), &&\mbox{ in }\Omega\times (0,T),\\
            \partial_n w+ qw&=0, &&\mbox{ on } \Gamma_i\times(0,T),\\
            \partial_nw &=0, &&\mbox{ on } \Gamma_a\times(0,T),\\
            w(T)&=0, &&\mbox{ in } \Omega,
        \end{aligned}
    \right.
\end{equation*}
where $\delta_T(\cdot)$ denotes the Dirac delta function at $T$.
\end{lem}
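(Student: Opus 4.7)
I would use the standard adjoint/Lagrangian calculus. Denote the state linearizations by $\dot u_N:=\partial_q u_N[\delta q]$ and $\dot u_D:=\partial_q u_D[\delta q]$. Differentiating the weak forms of \eqref{eqn:problem_NR}--\eqref{eqn:problem_DR} in $q$ shows that $\dot u_N$ and $\dot u_D$ satisfy the same problems as $u_N,u_D$ but with homogeneous initial/boundary data and inhomogeneous Robin sources $-\delta q\,u_N$ and $-\delta q\,u_D$ on $\Gamma_i$, together with $\dot u_D=0$ on $\Gamma_a'$ since $f$ does not depend on $q$. In the parabolic case an extra coupling appears: the elliptic equation for $\dot u_D$ inherits the volume source $-\partial_t\dot u_N(T)$.

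For the elliptic functional \eqref{eqn:cts_functional}, I would differentiate termwise. Since $u_N-u_D$, $\dot u_N$, and $\dot u_D$ are all harmonic, two applications of Green's identity reduce the linearization of $\|\nabla(u_N-u_D)\|_{L^2(\Omega)}^2+\|q^{1/2}(u_N-u_D)\|_{L^2(\Gamma_i)}^2$ to a $\Gamma_i$-boundary integral: the $\Gamma_a\setminus\Gamma_a'$ terms cancel because the Neumann traces match, the $\Gamma_a'$ terms vanish because $\dot u_D=0$ there, and the remaining boundary contributions together with the direct $\delta q$-term collapse to $\int_{\Gamma_i}\delta q(u_D^2-u_N^2)$. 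For the penalty $\alpha\|u_N-u_D\|_{L^2(\Omega)}^2$, I would test $-\Delta v_N=u_N-u_D$ and $-\Delta v_D=u_N-u_D$ (with the BCs of the lemma) against $\dot u_N$ and $\dot u_D$; integrating by parts twice, the Robin traces on $\Gamma_i$ cancel modulo the inhomogeneity $-\delta q\,u$, while the $\Gamma_a$-contributions vanish owing to $\partial_n v_N=0$, $\partial_n v_D=0$ on $\Gamma_a\setminus\Gamma_a'$, and $v_D=\dot u_D=0$ on $\Gamma_a'$. This yields $(u_N-u_D,\dot u_N)_\Omega=-\int_{\Gamma_i}\delta q\,u_N v_N$ and $(u_N-u_D,\dot u_D)_\Omega=-\int_{\Gamma_i}\delta q\,u_D v_D$, giving the $2\alpha(u_Dv_D-u_Nv_N)$ term.

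The parabolic case follows the same template but produces residual volume terms $(u_N(T)-u_D,\partial_t\dot u_N(T))_\Omega$ and $(v_D,\partial_t\dot u_N(T))_\Omega$, because $\dot u_N(T)$ and $\dot u_D$ are no longer harmonic. The key observation is that $\partial_t\dot u_N$ itself solves the linearized parabolic problem with Robin source $-\delta q\,\partial_t u_N$ on $\Gamma_i$ and zero initial data (obtained by differentiating the state equation in both $q$ and $t$ and using that $q,g$ are $t$-independent). Applying the backward-heat adjoint identity with $v_N$ (interpreted via the Dirac source $(u_N-u_D)\delta_T$ as terminal condition $v_N(T^-)=u_N(T)-u_D$) to $\dot u_N$ gives $(u_N(T)-u_D,\dot u_N(T))_\Omega=-\int_0^T\int_{\Gamma_i}\delta q\,u_N v_N\,dt$, and applying the same identity to $\partial_t\dot u_N$ yields $(u_N(T)-u_D,\partial_t\dot u_N(T))_\Omega=-\int_0^T\int_{\Gamma_i}\delta q\,\partial_t u_N v_N\,dt$, accounting for the first two non-$\alpha$ boundary contributions. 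The auxiliary adjoint $w$ (backward heat with terminal data $v_D$) is introduced solely to process the cross term $(v_D,\partial_t\dot u_N(T))_\Omega$ arising from the coupling in $\dot u_D$, producing $\int_0^T\int_{\Gamma_i}\delta q\,\partial_t u_N w\,dt$. Summing contributions and factoring out $\delta q|_{\Gamma_i}$ reproduces the claimed formula.

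The main obstacle is the careful distributional handling of the Dirac sources in $v_N$ and $w$: a correct integration by parts in time must not double-count the terminal contribution, and the non-harmonicity of $\dot u_N(T)$ and $\dot u_D$ is precisely what forces $\partial_t u_N$ (rather than $u_N$) to appear in the parabolic formula and necessitates the auxiliary adjoint $w$; absent the elliptic--parabolic coupling through $\partial_t u_N(T)$, a single adjoint $v_N$ would suffice.
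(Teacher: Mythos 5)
Your proposal is correct and follows essentially the same route as the paper: linearize the two state problems in $q$, introduce the adjoints $v_N$, $v_D$ and (in the parabolic case) $w$ with Dirac terminal sources, and use duality/integration by parts to collapse every contribution to a $\Gamma_i$-integral against the perturbation, with the elliptic--parabolic coupling through $\partial_t u_N(T)$ producing the $\partial_t u_N$ factors and necessitating the auxiliary adjoint $w$, exactly as in the paper's proof (which simply cites Chaabane--Jaoua for the elliptic formula you re-derive). The one caveat is your claim that $\partial_t\dot u_N$ has zero initial data: at $t=0^+$ it acts as the boundary functional $\varphi\mapsto -(\delta q\,u_0,\varphi)_{L^2(\Gamma_i)}$ because the linearized Robin condition is incompatible with $\dot u_N(0)=0$, but the paper's own duality computation silently drops the same $t=0$ boundary term, so this does not distinguish your argument from theirs.
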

\begin{proof}
The result for the elliptic case can be found in \cite[Theorem 4]{Chaabane:1999}.  We prove only the parabolic case and $\Gamma_a'=\Gamma_a$. Let $J_\alpha(q)=J_{\rm KV}(q)+\alpha J_{\rm r}(q)$, with $J_{\rm KV}(q)= \|\nabla u_N(T)-\nabla u_D  \|_{L^2(\Omega)}^2+\|  q^{\frac{1}{2}}(u_N(T)-  u_D ) \|_{L^2(\Gamma_i)}^2$ and $  J_{\rm r}(q)=\|  u_N(T)-  u_D  \|_{L^2(\Omega)}^2$. Upon expansion, we have
\begin{align*}
        J_{\rm KV}(q)=&[\|\nabla u_N(T)\|_{L^2(\Omega)}^2 + (qu_N(T),u_N(T))_{L^2(\Gamma_i)}] + [\|\nabla u_D\|_{L^2(\Omega)}^2 + (qu_D,u_D)_{L^2(\Gamma_i)}]\\
        &[-2(\nabla u_N(T),\nabla u_D )-2(q u_N(T), u_D)_{L^2(\Gamma_i)}]:=J_N+J_D+J_{ND}.
    \end{align*}
Next we compute the Gateaux derivatives of $J_N$, $J_D$ and $J_{ND}$ separately.
The Gateaux derivative $J_N'(q)[p]$ of $J_N$ at $q$ in the direction $p$ is given by
    \begin{equation*}
        J_N'(q)[p]= 2( \nabla u_N(T) , \nabla \widetilde{u}_N(T)) +2(q  u_N(T) ,\widetilde{u}_N(T))_{L^2(\Gamma_i)} + (p, u_N(T)^2)_{L^2(\Gamma_i)},
    \end{equation*}
    where $\widetilde{u}_N$ is the directional derivative of $u_N(T)$ at $q $ along the direction $p$ which satisfies $\widetilde{u}_N(0)=0$ and
    \begin{equation*}
     (\partial_t \widetilde{u}_N(t),\varphi) +  (\nabla \widetilde{u}_N(t),\nabla\varphi)+ (q \widetilde{u}_N(t), \varphi )_{L^2(\Gamma_i)} =-( p{u}_N(t), \varphi)_{L^2(\Gamma_i)}, \quad \forall \varphi\in H^1(\Omega).
    \end{equation*}
    Similarly, the Gateaux derivative $J_D'(q)[p]$ is given by
    \begin{align*}
        J_D'(q)[p]&= 2 (\nabla u_D, \nabla \widetilde{u}_D) +2 (q  u_D   ,\widetilde{u}_D)_{L^2(\Gamma_i)}+(p, u_D^2)_{L^2(\Gamma_i)},
    \end{align*}
    with $\widetilde{u}_D\in H_{\Gamma_a}^1(\Omega):=\{v\in H^1(\Omega): v|_{\Gamma_a}=0\}$ satisfying  \begin{equation*}
      (\nabla \widetilde{u}_D ,\nabla\varphi) +(q \widetilde{u}_D, \varphi)_{L^2(\Gamma_i)} =-(p{u}_D, \varphi)_{L^2(\Gamma_i)}-( \partial_t \widetilde{u}_N(T), \varphi) , \quad \forall \varphi\in H_{\Gamma_a}^1(\Omega).
    \end{equation*}
    For the term $J_{ND}$, by integration by parts and the weak form of problem \eqref{eqn:para_robin}, we have
    \begin{align*}
        J_{ND}&=-2 (\partial_n u_N(T), u_D)_{L^2(\partial\Omega)} +2( \Delta u_N(T), u_D)-2( q u_N(T), u_D )_{L^2(\Gamma_i)}\\
        %&=2(q u_N(T),u_D)_{L^2(\Gamma_i))}-2(\partial_n u_N(T),u_D)_{L^2(\Gamma_a)}+2(\partial_t u_N(T),u_D)-2( q u_N(T), u_D )_{L^2(\Gamma_i)}\\
        &=-2 (g, u_D)_{L^2(\Gamma_a)} +2( \partial_t u_N(T), u_D).
    \end{align*}
Since $u_D|_{\Gamma_a}=f$, the directional derivative $J_{ND}'(q)[p]$ of $J_{ND}$ is given by
    \begin{equation*}
         J_{ND}'(q)[p]=2 (\partial_t \widetilde{u}_N(T),u_D)+2( \partial_t u_N(T), \widetilde{u}_D).
    \end{equation*}
Combining the expressions of $J_{N}'(q)[p]$, $J_{D}'(q)[p]$ and $J_{ND}'(q)[p] $ and using the weak forms of $\widetilde{u}_N$ (with $\varphi=u_N$) and $ {u}_D$ (with $\varphi=\widetilde{u}_D$) lead to
    \begin{equation*}
        J_{\rm KV}'(q)[p]=(p,u_D^2-u_N(T)^2)_{L^2(\Gamma_i)}+2 ( \partial_t \widetilde{u}_N(T),u_D-u_N(T)).
    \end{equation*}
    To simplify the term $(\partial_t \widetilde{u}_N(T),u_D-u_N(T))$, we test the weak forms of $v_N$ and  $\partial_t \widetilde{u}_N$ with $\partial_t \widetilde{u}_N$ and $v_N$, respectively and apply integration by parts, and obtain
    \begin{equation*}
       (\partial_t \widetilde{u}_N(T),u_D-u_N(T))=\int_0^T (p\partial_t u_N(t), v_N(t))_{L^2(\Gamma_i)} \d t.
    \end{equation*}
Combining the preceding identities yields 
$$J_{\rm KV}'(q)=u_D^2-u_N(T)^2+2\int_0^T \partial_t u_N(t)  v_N(t)  \d t.$$
Next, we have
$J_{\rm r}'(q)[p]=2( \widetilde{u}_N(T)-\widetilde{u}_D,u_N(T)-u_D)$, 
where $\widetilde{u}_N(T)$and $\widetilde{u}_D $ are the directional derivatives of $u_N(T)$ and $u_D$, respectively. Taking the test function $ \widetilde{u}_N$ in the weak form of $v_N$ and taking test function $v_N$ in the weak form of $ \widetilde{u}_N$, we obtain
    \begin{equation*}
        ( \widetilde{u}_N(T) , u_N(T)-u_D) =-\int_0^T (p u_N(t),  v_N(t))_{L^2(\Gamma_i)} \d t.
    \end{equation*}
Similarly, using the weak forms of $\widetilde{u}_D$ and $v_D$, we have
    \begin{align*}
        (\widetilde{u}_D,u_N(T)-u_D)  =-(p u_D,  v_D)_{L^2(\Gamma_i)}-(\partial_t\widetilde{u}_N(T),v_D). 
    \end{align*}
For the term $(\partial_t\widetilde{u}_N(T),v_D)$, we test the weak forms of $w$ and $\partial_t \widetilde{u}_N$ by $\partial_t \widetilde{u}_N$ and $w$, respectively, and obtain
    \begin{align*}
        (\widetilde{u}_D,u_N(T)-u_D)  %=&-(p u_D,  v_D)_{L^2(\Gamma_i)}-(\partial_t\widetilde{u}_N(T),v_D)\\
        =&- (p u_D,v_D)_{L^2(\Gamma_i)}-\int_0^T (p \partial_t u_N(t), w(t))_{L^2(\Gamma_i)}  \d t.
    \end{align*}
Therefore, we obtain 
$$
J_{\rm r}'(q)=2\left[u_Dv_D-\int_0^T u_N(t)v_N(t)\d t+\int_0^T \partial_t u_N(t) w(t) \d t\right].$$
Combining these identities completes the proof of the proposition.
\end{proof}

\bibliographystyle{abbrv}

\end{document}